\newtheorem{thm}{Theorem}[section]
\newtheorem{theorem}{Theorem}[section]
\newtheorem{prop}[thm]{Proposition}
\newtheorem{lemma}[thm]{Lemma}
\theoremstyle{definition}
\theoremstyle{remark}
\numberwithin{equation}{section}
\newcommand{\R}{\mathbb{R}}  
\begin{document}


\title{Existence of rotating planet solutions to the Euler-Poisson equations with an inner hard core}


\author{Yilun Wu}
\address{Department of Mathematics, University of Michigan, Ann Arbor, MI 48109}
\email{yilunwu@umich.edu}
\urladdr{http://www-personal.umich.edu/~yilunwu/} 





\begin{abstract}
The Euler-Poisson equations model rotating gaseous stars. Numerous efforts have been made to establish existence and properties of the rotating star solutions. Recent interests in extrasolar planet structures require extension of the model to include a inner rocky core together with its own gravitational potential. In this paper, we discuss various extensions of the classical rotating star results to incorporate a solid core.
\end{abstract}


 \maketitle




\bibliographystyle{acm}

\section{Introduction}

The motion of a rotating Newtonian gaseous star is described by the following compressible Euler-Poisson equations:
\begin{equation}\label{intro: eq: Euler-Poisson system primal}
\begin{cases}
\rho_t + \nabla\cdot(\rho\mathbf{v}) = 0 \\
(\rho\mathbf{v})_t + \nabla\cdot(\rho\mathbf{v}\otimes\mathbf{v}) + \nabla p = -\rho\nabla\phi\\
\Delta \phi = 4\pi \rho
\end{cases}
\end{equation}
Here $\rho$, $p$, $\phi$ and $\mathbf{v}$ are respectively density, pressure, gravitational potential and velocity vector field of the gas comprising a star. The solution to the Poisson equation is not unique. One picks the decaying solution at infinity
\begin{equation}
-\phi = B\rho = \rho * \frac{1}{|\mathbf{x}|} = \int \frac{\rho(\mathbf{y})}{|\mathbf{x}-\mathbf{y}|}~d\mathbf{y}
\end{equation}
according to Newton's law of gravitation, where $*$ defines a convolution. In order to model a rotating star in dynamical equilibrium, one further makes the assumptions that the solution is axisymmetric and time independent. Under these assumptions the first equation in \eqref{intro: eq: Euler-Poisson system primal} is indentically satisfied, whereas the second equation is reduced to 
\begin{equation}\label{intro: eq: vector rotating star}
\frac{\nabla p}{\rho} = -\nabla \phi + r\Omega^2 \mathbf{e}_r.
\end{equation}
Here we are assuming that the velocity vector field $\mathbf{v}$ is given by $\mathbf{v}=r\Omega(r)\mathbf{e}_{\theta}$, where the cylindrical coordinates $(r,\theta,z)$ and orthogonal frame field $\{\mathbf{e}_r, \mathbf{e}_{\theta}, \mathbf{e}_z\}$ are used. To close the system one imposes an equation of state $p=p(\rho)$, sets up the velocity field $\mathbf{v}$, and prescribes the total mass $\int_{\R ^3}\rho ~d\mathbf{x}$. We seek a non-negative axisymmetric solution $\rho$ to \eqref{intro: eq: vector rotating star}. The existence and properties of rotating star solutions to \eqref{intro: eq: vector rotating star} were attained by Auchmuty and Beals \cite{auchmuty1971variational}, Auchmuty \cite{auchmuty1991global}, Caffarelli and Friedman \cite{caffarelli1980shape}, Friedman and Turkington \cite{friedman1980asymptotic,friedman1981existence,friedman1980oblateness}, Li \cite{li1991uniformly}, Chanillo and Li \cite{chanillo1994diameters}, Luo and Smoller \cite{luo2004rotating}, and Luo and Smoller \cite{luo2009existence}.

Recent observations on extrasolar giant gaseous planets have raised fundamental questions about their interior structure and origin. Many of the extrasolar gaseous planets possess unexpectedly small radii, suggesting high metallicity in their composition and possibly the existence of a solid core in the center (Anderson and Adams \cite{anderson2012effects}). Efforts have been made to simulate the evolution of these planets, and evidence for the existence of a solid core has been found (Militzer et al. \cite{militzer2008massive}). Models involving high metallicity and center core have been constructed and examined (Miller et al. \cite{miller2011heavy}, Burrows et al. \cite{burrows2007possible}). As a first model from a mathematical perspective, one could modify the Euler-Poisson equations for rotating stars to include a solid core and its gravitational potential. Let $K$ be an axisymmetric bounded domain in $\mathbb{R}^3$, and $\rho_K$ be a given axisymmetric non-negative function on $K$, indicating the density of the solid core. Let $\phi_K = -\rho_K * \frac{1}{|\mathbf{x}|}$ denote the gravitational potential of $\rho_K$. Then by the $-\phi_K$-modified Euler-Poisson system we mean the following 
\begin{equation}\label{intro: eq: phi_K modified Euler-Poisson system primal}
\begin{cases}
\rho_t + \nabla\cdot(\rho\mathbf{v}) = 0 \\
(\rho\mathbf{v})_t + \nabla\cdot(\rho\mathbf{v}\otimes\mathbf{v}) + \nabla p = -\rho\nabla(\phi + \phi_K) \\
\Delta \phi = 4\pi \rho
\end{cases}
\end{equation}
As is in the case of rotating star solutions, we assume axisymmetry and time independence to reduce the equations as follows.
\begin{equation}\label{intro: eq: vector rotating planet}
\frac{\nabla p}{\rho} = -\nabla (\phi + \phi_K) + r\Omega^2 \mathbf{e}_r.
\end{equation}
The goal of the present paper is to discuss existence and non-existence of solutions to \eqref{intro: eq: vector rotating planet}. 

The idea that led to existence results for \eqref{intro: eq: vector rotating star} is to regard it as a gradient:
\begin{equation}\label{eq: gradient Bernoulli}
\nabla \big( a(\rho) \big)= - \nabla \phi + \nabla J,
\end{equation}
where 
\begin{equation}
a(s) = \int_0^s \frac{p'(t)}{t}dt, \text{ and }J(r) = \int_0^r s\Omega^2(s) ds.
\end{equation}
From \eqref{eq: gradient Bernoulli}, we get
\begin{equation}\label{intro: eq: Bernoulli relation}
a(\rho)= -\phi + J(r) + \lambda 
\end{equation}
for some constant $\lambda$. With prescribed equation of state and angular velocity profile $\Omega(r)$, \eqref{intro: eq: Bernoulli relation} is a single equation for the unknown function $\rho$, although we still don't know the value of $\lambda$ at this stage. 

Now the key idea to solve relation \eqref{intro: eq: Bernoulli relation} is to regard it as the Euler-Lagrange equation of the following energy functional
\begin{equation}
E(\rho) = \int_{\R ^3} \bigg( A(\rho) - \frac{1}{2}\rho B\rho - \rho J \bigg)~d\mathbf{x}
\end{equation}
subject to the constraint 
\begin{equation}
\int_{\R ^3} \rho ~d\mathbf{x}= M.
\end{equation} 
Here 
\begin{equation}
A(s) = \int_0^s a(t)dt
\end{equation}
and
\begin{equation}
B\rho(\mathbf{x}) = \rho * \frac{1}{|\mathbf{x}|} = \int \frac{\rho(\mathbf{y})}{|\mathbf{x}-\mathbf{y}|}~d\mathbf{y}.
\end{equation}
Under this formulation the unknown constant $\lambda$ in \eqref{intro: eq: Bernoulli relation} is naturally realized as a Lagrange multiplier. 

Auchmuty and Beals \cite{auchmuty1971variational} imposed some non-trival decay conditions on $J$ and got the first existence results for rotating stars. However, these conditions excluded constant $\Omega$ and therefore ruled out a large family of physically interesting solutions. Li \cite{li1991uniformly} removed this restriction and was able to obtain an existence result for $J$ with small $L^{\infty}$ norm. This enabled him to prove existence for small constant $\Omega$. The smallness of $\|J\|_{\infty}$ is essential in Li's proof. He also substantiated the smallness requirement by proving a non-existence result for \eqref{intro: eq: Bernoulli relation} with large constant $\Omega$.

Following a similar route, one can write down a scalar equation for the modified Euler-Poisson equations:
\begin{equation}\label{intro: eq: Bernoulli relation rotating planets}
a(\rho)= -\phi -\phi_K + J(r) + \lambda. 
\end{equation}
The corresponding Euler-Lagrange energy is 
\begin{equation}
E(\rho) = \int_{\mathbb{R} ^3 \setminus K} \bigg( A(\rho) - \frac{1}{2}\rho B\rho - \rho J  - B\rho_K \bigg)~d\mathbf{x}.
\end{equation}
Recall that $\rho_K$ is the density of the axisymmetric solid core and is positive. The minus sign in front of the $B\rho_K$ term is slightly surprising. One would expect that the gravitation of an extra center core should somehow cancel out the effect of centrifugal force due to the appearance of $J$, but at least on the energy level, they are of the same sign. In particular, without a smallness assumption on $\rho_K$, Li's proof no longer works. It is physically reasonable to assume slow rotation in order for a solution to exist, but unreasonable to assume smallness of solid core density. 

Furthermore, with a given rotation profile that is not necessarily small, a solution may still exist if the core gravitational pull is sufficiently large. The potential existence of a fast rotating planet with heavy core is a unique phenomenon that is not present in the classical rotating star case.

On the other hand, with a given core density and total mass, there should still be no solution if the rotation is sufficiently large. One may want to look for a non-existence result like the one in \cite{li1991uniformly}. However, the proof in \cite{li1991uniformly} involves a subtle argument based on integral identities, and fails to apply to the case with a solid core. We thus employ a different argument to show non-existence.

\section{Statement of Results}

Let us consider the following axisymmetric equilibrium $\Phi_K$-modified Euler-Poisson equations in $\mathbb{R}^3\setminus K$, for a bounded axisymmetric domain $K$:
\begin{equation}\label{chap3: eq: Euler-Poisson}
\frac{\nabla p}{\rho} = \nabla \big(B\rho+ J + \Phi_K \big),
\end{equation}
which is the gradient of the following equilibrium relation
\begin{equation}\label{chap3: eq: equilibrium eq}
A'(\rho)- B\rho - J -\Phi_K = \lambda.
\end{equation}
Here
\begin{equation}
B\rho(\mathbf{x})=\int_{\mathbb{R}^3\setminus K}\frac{\rho(\mathbf{y})}{|\mathbf{x}-\mathbf{y}|}~d\mathbf{y}
\end{equation}
is the Newtonian potential of $\rho$, 
\begin{equation}
J(r)=\int_0^r s\Omega ^2(s)~ ds,
\end{equation}
where $r=\sqrt{x_1^2 + x_2^2}$, is the potential of centrifugal force, and $\Phi_K$ is the potential generated by the core. We assume 
\begin{equation}\label{chap3: cond: J}
s\Omega^2(s) \text{ is a given non-negative function in } L^1[0,\infty)\cap C[0, \infty). 
\end{equation}
If gravity is the only effect of the core, $\Phi_K$ is given by 
\begin{equation}\label{chap3: eq: Phi_K by gravity}
\Phi_K(\mathbf{x})=B\rho_K(\mathbf{x})=\int_K \frac{\rho_K(\mathbf{y})}{|\mathbf{x}-\mathbf{y}|}~d\mathbf{y}
\end{equation}
where $\rho_K \in L^q(K)$ for some $q>3$ is a given axisymmetric non-negative function on $K$. More generally, $\Phi_K$ is a function satisfying
\begin{equation}\label{chap3: cond: Phi_K}
\Phi_K \in C^1(\mathbb{R}^3)\text{ is positive, axisymmetric, and }\lim_{\textbf{x} \to \infty}\Phi_K(\textbf{x}) = 0,
\end{equation}
and
\begin{align}\label{chap3: cond: Phi_K 2}
&\text{there is a }z_0>0 \text{ such that if } |x_3|>z_0,  ~\Phi_K\text{ is non-increasing as }\\ &|x_3|\text{ increases}. \notag
\end{align}
The equation of state is given by $p = f(\rho)$, where $f$ is a function satisfying
\begin{equation}\label{chap3: cond: f 1}
f(s) \text{ is non-negative, continuous, and strictly increasing for } s>0.
\end{equation}
\begin{equation}\label{chap3: cond: f 2}
\lim_{s\to 0} f(s)s^{-\frac{4}{3}} = 0, \quad \lim_{s\to \infty}f(s)s^{-\frac{4}{3}}=\infty .
\end{equation}
A typical example of such an $f$ would be $f(s)=s^{\gamma}$ for some $\gamma>\frac{4}{3}$.
The internal energy potential $A$ in \eqref{chap3: eq: equilibrium eq} is related to $f$ by
\begin{equation}\label{chap3: eq: define A}
A(s)=s\int_ 0^s \frac{f(t)}{t^2}dt.
\end{equation}

We then have the following

\begin{theorem}\label{chap3: thm: existence of solution with small J}
Given $M>0$, $\Phi_K$ satisfying \eqref{chap3: cond: Phi_K}, and $f$ satisfying \eqref{chap3: cond: f 1} and \eqref{chap3: cond: f 2}, there is an $\epsilon_1 >0$, such that if $\|J\|_{\infty} < \epsilon_1$, there exists a compactly supported axisymmetric continuous function $\rho: \mathbb{R}^3\setminus K\to [0,\infty)$, such that
\begin{enumerate}
\item $\rho$ is differentiable where it is positive, and satisfies the $\Phi_K$-modified Euler-Poisson equation \eqref{chap3: eq: Euler-Poisson} there.\\
\item $\int_{\mathbb{R}^3\setminus K}\rho(\mathbf{x})~d\mathbf{x}=M$.
\end{enumerate} 
\end{theorem}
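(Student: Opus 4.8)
The plan is to follow the variational strategy outlined in the introduction: realize the equilibrium relation \eqref{chap3: eq: equilibrium eq} as the Euler--Lagrange equation of the energy
\begin{equation*}
E(\rho) = \int_{\mathbb{R}^3\setminus K} \Bigl( A(\rho) - \tfrac{1}{2}\rho B\rho - \rho J - \rho\Phi_K \Bigr)~d\mathbf{x}
\end{equation*}
over the constraint set $\mathcal{W}_M = \{\rho \in L^1\cap L^{4/3}(\mathbb{R}^3\setminus K) : \rho\ge 0,\ \int \rho = M\}$, adapting Li's argument for small $\|J\|_\infty$. First I would record the analytic preliminaries: by \eqref{chap3: cond: f 2} and \eqref{chap3: eq: define A}, $A(s)$ grows faster than $s^{4/3}$ at infinity and is $o(s^{4/3})$ near $0$, so $A$ controls the $L^{4/3}$ norm; meanwhile the Newtonian energy term is bounded via the Hardy--Littlewood--Sobolev inequality, $\int \rho B\rho \lesssim \|\rho\|_{6/5}^2 \le \|\rho\|_1^{2\theta}\|\rho\|_{4/3}^{2(1-\theta)}$ for the appropriate interpolation exponent, and the $\rho J$ and $\rho\Phi_K$ terms are bounded by $(\|J\|_\infty + \|\Phi_K\|_\infty)\|\rho\|_1 = (\|J\|_\infty+\|\Phi_K\|_\infty)M$ since \eqref{chap3: cond: Phi_K} makes $\Phi_K$ bounded. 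Combining these, $E$ is bounded below on $\mathcal{W}_M$ and coercive in the $L^{4/3}$ norm, so a minimizing sequence is bounded in $L^1\cap L^{4/3}$.

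Next I would extract a minimizer by the standard concentration-compactness / rearrangement route. Since $J$ and $\Phi_K$ are axisymmetric and (via \eqref{chap3: cond: Phi_K 2} for $\Phi_K$, and monotonicity of $J$ in $r$) favor concentration near a bounded region, one can first restrict to axisymmetric $\rho$, then use Steiner symmetrization in $x_3$ and the fact that $J$ is non-decreasing in $r$ to control vertical and radial spreading, ruling out the ``vanishing'' and ``dichotomy'' alternatives — this is precisely the place where small $\|J\|_\infty$ is used, exactly as in Li \cite{li1991uniformly}, to prevent mass from escaping to $r=\infty$ against the confining gravitational self-attraction. Along a minimizing sequence, after a bounded translation in $x_3$, we get weak $L^{4/3}$ convergence to some $\rho_0$; lower semicontinuity of the convex part $\int A(\rho)$ together with the compactness of the operator $\rho \mapsto B\rho$ and $\rho\mapsto \int\rho\Phi_K$ on the relevant bounded sets (the latter because $\Phi_K$ is continuous and vanishes at infinity, so it is a uniform limit of compactly supported continuous functions) shows $E(\rho_0) \le \liminf E(\rho_n)$, while no mass is lost so $\int\rho_0 = M$; hence $\rho_0$ is a minimizer.

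Then I would derive the Euler--Lagrange equation: for the minimizer $\rho_0$, standard variations using nonnegative test perturbations that preserve the mass constraint yield a Lagrange multiplier $\lambda$ with $A'(\rho_0) - B\rho_0 - J - \Phi_K = \lambda$ on the set $\{\rho_0 > 0\}$ and $\ge \lambda$ (after inverting $A'$, equivalently $\rho_0 = 0$) elsewhere; writing $a = A'$ and noting $a$ is continuous and strictly increasing (from \eqref{chap3: cond: f 1}), this gives $\rho_0 = (a^{-1}(B\rho_0 + J + \Phi_K + \lambda))_+$. Since $B\rho_0$ is continuous (Newtonian potential of an $L^1\cap L^\infty$-bounded density — and boundedness of $\rho_0$ follows by bootstrapping this identity), and $J$, $\Phi_K$ are continuous, $\rho_0$ is continuous; differentiability where $\rho_0>0$ follows from elliptic regularity applied to $\Delta(B\rho_0) = -4\pi\rho_0$ feeding back into the identity, and equation \eqref{chap3: eq: Euler-Poisson} is recovered by taking gradients. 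Compact support of $\rho_0$ follows because $B\rho_0 \to 0$ and $J$ is bounded, so $B\rho_0 + J + \Phi_K + \lambda$ eventually drops below the threshold where $a^{-1}$ vanishes — here one must check $\lambda < 0$, which comes from testing the energy against a rescaled profile and using the $o(s^{4/3})$ behavior of $A$ near zero to make $E$ negative, forcing $\lambda$ strictly negative.

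The main obstacle is the compactness step: the $-\rho\Phi_K$ term in the energy has the \emph{same} sign as the $-\rho J$ term (as the author emphasizes), so unlike a helpful confining potential it offers no extra coercivity, and one cannot assume $\Phi_K$ small. The saving grace is that $\Phi_K$ is bounded and vanishes at infinity, so it contributes a compact (hence weakly continuous) and uniformly bounded perturbation $-\int\rho\Phi_K \ge -\|\Phi_K\|_\infty M$ to the energy — it shifts the infimum down by a controlled amount but does not destroy boundedness below or the concentration-compactness dichotomy analysis, which still hinges only on the smallness of $\|J\|_\infty$ relative to the gravitational self-binding. Carefully isolating the role of $\|J\|_\infty$ from that of $\Phi_K$ in the exclusion of dichotomy — showing the binding inequality $I_M < I_{m} + I_{M-m}$ for the infimum $I_M$ still holds when $\|J\|_\infty$ is small, with $\Phi_K$ merely riding along — is the technical heart of the argument.
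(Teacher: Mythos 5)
Your outline takes a genuinely different route from the paper --- direct minimization on the unbounded domain via concentration--compactness and symmetrization, versus the paper's Auchmuty--Beals/Li scheme of minimizing on invading balls $W_R$ and then proving uniform bounds on the support of the minimizers $\rho_R$ so that they stabilize --- but as written it has gaps that are not merely technical. First, Steiner symmetrization in $x_3$ is not available here: condition \eqref{chap3: cond: Phi_K 2} only makes $\Phi_K$ non-increasing in $|x_3|$ for $|x_3|>z_0$, not symmetric-decreasing, and rearrangement does not respect the constraint that $\rho$ vanish on the core $K$; so symmetrization need not decrease $E$. The paper's substitute is the sliding argument of Lemma \ref{chap3: lem: z bound on the support of rho}, which translates a ``non-empty'' half-space of slabs downward by two units and uses only the one-sided monotonicity beyond $z_0$. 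Also, your parenthetical that monotonicity of $J$ in $r$ ``favors concentration'' is backwards: the term $-\rho J$ with $J$ non-decreasing rewards mass at large $r$, which is exactly the destabilizing effect that smallness of $\|J\|_{\infty}$ must beat. Second, the exclusion of vanishing and dichotomy --- which you correctly identify as the heart of the matter --- is asserted rather than carried out, and it is not a routine citation: $J$ does not vanish as $r\to\infty$, and $\Phi_K$ and $K$ destroy translation invariance, so neither the translation-invariant nor the standard locally compact case of the concentration--compactness lemma applies verbatim. The paper's elementary replacement is Lemma \ref{chap3: lem: lower bound on mass in unit ball}: from $I_R\le -e_1$ and $\|J\|_{\infty}<e_1/(2M)$ it extracts a unit ball carrying mass at least $\epsilon_0$, treating the case $\int\rho_R\Phi_K\ge e_1/4$ separately precisely because $\Phi_K$ cannot be assumed small, and then compares the lower bound $B\rho_R+\Phi_K\ge e_2/2$ on $\{\rho_R>0\}$ with the decay of $B\rho_R+\Phi_K$ at large $r$ and $|z|$.

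Third, your argument that $\lambda<0$ (``testing against a rescaled profile \ldots to make $E$ negative, forcing $\lambda$ strictly negative'') does not follow. Multiplying the Euler--Lagrange relation by $\rho$ and integrating gives $\lambda M=\int A'(\rho)\rho-\int\rho B\rho-\int\rho(J+\Phi_K)$, so $\lambda M-E(\rho)=\int\big(A'(\rho)\rho-A(\rho)\big)-\frac{1}{2}\int\rho B\rho$, a difference of two nonnegative quantities; the sign of $E$ does not determine the sign of $\lambda$. The paper instead proves the uniform bound $\lambda_R\le-e_2$ pointwise (Lemma \ref{chap3: lem: uniform negative upper bound on lambda_R}) by locating a point where $\rho_R=O(R_3^{-3})$ while $B\rho_R\ge c R_3^{-1}$, so that $\lambda_R\le A'(CMR_3^{-3})-cR_3^{-1}=o(R_3^{-1})-\Theta(R_3^{-1})<0$; this uniform negativity, together with the requirement $\|J\|_{\infty}\le e_2/2$, is what drives the compact-support estimates. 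These three steps would have to be supplied before your argument closes; the remainder of your sketch (Euler--Lagrange equation, continuity, regularity where positive) matches the standard material invoked in Proposition \ref{chap3: prop: variational principle}.
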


\begin{theorem}\label{chap3: thm: existence of solution with small Omega}
Given $M>0$, $\Phi_K$ satisfying \eqref{chap3: cond: Phi_K}, and $f$ satisfying \eqref{chap3: cond: f 1} and \eqref{chap3: cond: f 2}, there is an $\epsilon_2 >0$, such that if $\Omega(s)\equiv \Omega < \epsilon_2$, there exists a compactly supported axisymmetric continuous function $\rho: \mathbb{R}^3\setminus K\to [0,\infty)$, such that
\begin{enumerate}
\item $\rho$ is differentiable where it is positive, and satisfies the $\Phi_K$-modified Euler-Poisson equation \eqref{chap3: eq: Euler-Poisson} there.\\
\item $\int_{\mathbb{R}^3\setminus K}\rho(\mathbf{x})~d\mathbf{x}=M$.
\end{enumerate} 
\end{theorem}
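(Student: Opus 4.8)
The plan is to reduce Theorem~\ref{chap3: thm: existence of solution with small Omega} to Theorem~\ref{chap3: thm: existence of solution with small J} by truncating the rotation profile, following the idea of Li \cite{li1991uniformly}. For a constant $\Omega$ one has $J(r)=\tfrac12\Omega^2 r^2$, which is unbounded, so Theorem~\ref{chap3: thm: existence of solution with small J} does not apply directly; worse, the energy $E$ is not bounded below on $\{\rho\ge 0:\int_{\mathbb{R}^3\setminus K}\rho=M\}$, because shedding a small amount of mass $m$ to large cylindrical radius $r$ (spread thinly over a large torus) drives $-\int\rho J$ down like $-\tfrac12 m\Omega^2 r^2$, while the change in the other three terms of $E$ stays bounded as $r\to\infty$. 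A solution can therefore only be produced by forcing its support into a region where the centrifugal potential is controlled.

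Concretely, fix $\epsilon_1$ as in Theorem~\ref{chap3: thm: existence of solution with small J}. The first thing to extract from the proof of that theorem is a \emph{uniform} support bound: there is $R_0=R_0(M,f,\Phi_K)$ such that whenever the centrifugal potential satisfies $\|J\|_\infty\le\epsilon_1/2$, the solution it produces has support in $B_{R_0}$. Granting this, given $\Omega>0$ let $\hat\Omega(s)$ equal $\Omega$ for $s\le R_0+1$, equal $0$ for $s\ge R_0+2$, and interpolate smoothly; then $s\hat\Omega^2(s)\in L^1[0,\infty)\cap C[0,\infty)$, the associated $\hat J(r)=\int_0^r s\hat\Omega^2(s)\,ds$ agrees with $J$ on $[0,R_0+1]$, and $\|\hat J\|_\infty\le\tfrac12\Omega^2(R_0+2)^2$. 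Choosing $\epsilon_2>0$ so small that $\Omega<\epsilon_2$ implies $\tfrac12\Omega^2(R_0+2)^2\le\epsilon_1/2$, we may apply Theorem~\ref{chap3: thm: existence of solution with small J} with $\hat J$ in place of $J$ and obtain a compactly supported, axisymmetric, continuous $\rho\colon\mathbb{R}^3\setminus K\to[0,\infty)$, differentiable where positive, with $\int_{\mathbb{R}^3\setminus K}\rho=M$ and $\operatorname{supp}\rho\subseteq B_{R_0}$. On $B_{R_0}$ one has $\hat J\equiv J$, so in fact $A'(\rho)=B\rho+J+\Phi_K+\lambda$ wherever $\rho>0$; that is, $\rho$ solves the genuine $\Phi_K$-modified Euler--Poisson system \eqref{chap3: eq: Euler-Poisson} for the prescribed constant $\Omega$, which is the assertion.

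The crux, and the main obstacle, is the uniform support bound invoked above. From the Euler--Lagrange relation a minimizer satisfies $\operatorname{supp}\rho\subseteq\{B\rho+J+\Phi_K+\lambda\ge 0\}$, so one needs: (i) a pointwise bound $\rho\le\rho_{\max}$, hence $\|B\rho\|_\infty\le C$, obtained as a fixed-point estimate that closes precisely because the growth hypothesis \eqref{chap3: cond: f 2} makes $(A')^{-1}$ grow slowly enough; and (ii) a quantitative lower bound on $-\lambda$, large enough to exceed $\|J\|_\infty$, so that together with the decay of $B\rho$ and $\Phi_K$ at infinity the superlevel set $\{B\rho+J+\Phi_K\ge-\lambda\}$ is trapped in a fixed ball. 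One must check that the constants in (i) and (ii) depend on $J$ only through $\|J\|_\infty\le\epsilon_1/2$, not through its profile, so that $R_0$ may be fixed before $\epsilon_2$ is chosen. Once this is in hand, verifying that the mollified cutoff $\hat\Omega$ meets \eqref{chap3: cond: J} and that the constants line up is routine.
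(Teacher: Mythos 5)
Your proposal is correct and follows essentially the same route as the paper: the paper likewise truncates the centrifugal potential (defining $\tilde J$ to agree with $\tfrac12\Omega^2 r^2$ up to the uniform support radius $R_7$ and to be constant beyond $2R_7$), applies Theorem~\ref{chap3: thm: existence of solution with small J} to $\tilde J$, and observes that the resulting solution, being supported where $\tilde J=J$, solves the original equation. The uniform support bound you identify as the crux is exactly the role played by $R_7=\sqrt{2}R_6$ in the paper, whose construction in Section~\ref{chap3: slow rotation} depends on $J$ only through the smallness condition $\|J\|_\infty\le\min\{e_1/2M,\,e_2/2\}$.
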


Theorem \ref{chap3: thm: existence of solution with small J} and Theorem \ref{chap3: thm: existence of solution with small Omega} establish existence of rotating planet solutions with given mass and core potential for sufficiently small angular velocity profile. 

Furthermore, we have the following

\begin{theorem}\label{chap3: thm: existence of solution with large core gravity, variable angular velocity}
Given $M>0$, $J$ satisfying \eqref{chap3: cond: J}, $f$ satisfying \eqref{chap3: cond: f 1} and \eqref{chap3: cond: f 2}, and $\Phi_K$ satisfying \eqref{chap3: cond: Phi_K}, there is a $\mu_0 >0$, such that if $\mu > \mu_0$, there exists a compactly supported axisymmetric continuous function $\rho: \mathbb{R}^3\setminus C\to [0,\infty)$, such that
\begin{enumerate}
\item $\rho$ is differentiable where it is positive, and satisfies 
the $\mu \Phi_K$-modified Euler-Poisson equations there.\\
\item $\int_{\mathbb{R}^3\setminus K}\rho(\mathbf{x})~d\mathbf{x}=M$.
\end{enumerate} 
\end{theorem}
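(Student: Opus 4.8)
The plan is to obtain the solution $\rho$ as a minimizer of the energy
\[
  E_\mu(\rho) \;=\; \int_{\mathbb{R}^3\setminus K}\Bigl( A(\rho) - \tfrac12\,\rho\, B\rho - \rho\, J - \mu\,\rho\,\Phi_K\Bigr)\,d\mathbf{x}
\]
over $\mathcal{W}_M=\{\rho\in (L^1\cap L^{4/3})(\mathbb{R}^3\setminus K):\rho\ge 0,\ \rho\ \text{axisymmetric},\ \int\rho = M\}$, following the same scheme as in the proof of Theorem~\ref{chap3: thm: existence of solution with small J}, the one essential change being that smallness of $J$ is no longer available and must be traded for largeness of $\mu$. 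Condition \eqref{chap3: cond: f 2} makes $\int A(\rho)$ coercive on $L^{4/3}$ and strong enough (through Hardy--Littlewood--Sobolev and interpolation) to control $\tfrac12\int\rho\,B\rho$, while $\int\rho\,J\le\|J\|_\infty M$ and $\mu\int\rho\,\Phi_K\le\mu\|\Phi_K\|_\infty M$ are finite; hence $E_\mu$ is bounded below on $\mathcal{W}_M$, with infimum $\mathcal{I}_\mu(M)$. Along a minimizing sequence $\rho_n$, bounded in $L^1\cap L^{4/3}$, convexity of $A$ yields weak lower semicontinuity of $\int A(\cdot)$, but $-\tfrac12\int\rho\,B\rho$ is only weakly \emph{upper} semicontinuous, so the whole argument reduces to preventing any loss of mass by $\rho_n$.

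This is exactly where $\mu>\mu_0$ is used. First, a single fixed density $\rho_\ast\in\mathcal{W}_M$ supported in a fixed bounded region $D$ adjacent to $K$ satisfies $E_\mu(\rho_\ast)\le C_0-\mu\,(\min_D\Phi_K)\,M$, so $\mathcal{I}_\mu(M)\to-\infty$ as $\mu\to\infty$; in particular $\mathcal{I}_\mu(M)<-\|J\|_\infty M$ once $\mu$ is large. If $\rho_n$ were to vanish in the sense of Lions, then $\int\rho_n B\rho_n\to 0$ and $\int\rho_n\Phi_K\to 0$ (the latter since $\Phi_K\to 0$ at infinity and $\rho_n\to 0$ in $L^p$ for $p$ slightly above $\tfrac43$), so $\liminf E_\mu(\rho_n)\ge-\|J\|_\infty M$, contradicting the upper bound; the same estimate excludes a rigid escape of all of the mass to spatial infinity.

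The remaining, decisive alternative is dichotomy, and it is ruled out by the strict binding inequality
\[
  \mathcal{I}_\mu(M) \;<\; \mathcal{I}_\mu(M-m) + \bigl(\mathcal{J}(m)-J_\infty m\bigr)\qquad\text{for all }m\in(0,M],
\]
where $\mathcal{J}(m)$ is the infimum of $\int\bigl(A(\rho)-\tfrac12\rho B\rho\bigr)$ over mass-$m$ densities (the non-rotating Auchmuty--Beals problem), $J_\infty=\lim_{r\to\infty}J(r)$ is finite by \eqref{chap3: cond: J}, and $\mathcal{J}(m)-J_\infty m$ is the energy ``at infinity'' of a blob of mass $m$ that has escaped. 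To prove this, start from a near-minimizer $\rho_1$ of $\mathcal{I}_\mu(M-m)$, compactly supported after truncation, and build a mass-$M$ competitor by adjoining a mass-$m$ density that sits in the potential well of the core rather than at infinity. For $m$ bounded away from $0$, take a near-minimizer of $\mathcal{J}(m)$ placed adjacent to $K$ and disjoint from $\mathrm{supp}\,\rho_1$; discarding the two favourable cross terms gives $\mathcal{I}_\mu(M)\le\mathcal{I}_\mu(M-m)+\mathcal{J}(m)-\mu\,c_0(m)\,m+o(1)$ with $c_0(m)=\min_{\mathrm{supp}}\Phi_K>0$, which beats the right-hand side as soon as $\mu>J_\infty/c_0(m)$---a threshold uniform over $m$ in compact subsets of $(0,M]$. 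In the regime $m\to 0$, where the mass-$m$ polytrope is too diffuse to sit deep in the well and $c_0(m)\to 0$, one instead glues a thin mass-$m$ layer directly onto $\mathrm{supp}\,\rho_1$---which, being part of a low-energy configuration with $\mu$ large, is itself confined near $K$---and exploits $\mathcal{J}(m)=o(m)$ and $\int A(\text{layer})=o(m)$. I expect establishing this binding inequality \emph{uniformly in }$m$, and in particular handling the $m\to 0$ regime (which forces one to quantify the confinement of near-minimizers), to be the main obstacle; everything else is soft.

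Once no mass is lost, $\rho_n$ is relatively compact in $\mathcal{W}_M$, the nonlocal and linear terms pass to the limit, and $\rho_0$ is a minimizer with $\int\rho_0=M$. The rest parallels the proof of Theorem~\ref{chap3: thm: existence of solution with small J}: the constrained variation produces a multiplier $\lambda$ with $\rho_0=(A')^{-1}\bigl((B\rho_0+J+\mu\Phi_K+\lambda)_+\bigr)$; finiteness of $\int\rho_0$ together with \eqref{chap3: cond: f 2} forces $\lambda\le-J_\infty$, and then $\rho_0$ is compactly supported because at infinity $B\rho_0,\mu\Phi_K\to 0$ and $J\le J_\infty$ drive the argument of $(A')^{-1}$ to a non-positive limit; finally a bootstrap through this same formula, using $B\rho_0\in C^1$ and $\Phi_K,J\in C^1$ (by \eqref{chap3: cond: Phi_K} and \eqref{chap3: cond: J}), shows $\rho_0$ is continuous on $\mathbb{R}^3\setminus K$, differentiable where positive, and solves \eqref{chap3: eq: equilibrium eq}---equivalently its gradient \eqref{chap3: eq: Euler-Poisson}---with $\mu\Phi_K$ in place of $\Phi_K$ and total mass $M$.
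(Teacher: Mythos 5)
Your route is genuinely different from the paper's, and the step you yourself flag as the main obstacle is a real gap, not a technicality. The paper does not run concentration--compactness on all of $\mathcal{W}_M$ at all: it minimizes $E_\mu$ on the truncated classes $W_R$ of \eqref{chap3: eq: defining W_R without upper bound}, proves by a bootstrap (Lemmas \ref{chap3: lem: upper bound on rho^4/3} and \ref{chap3: lem: l infty bound on rho_R for heavy core}) that $\|\rho_R\|_\infty\leq C(1+\mu)$ and, crucially, that $\|B\rho_R\|_\infty\leq C(1+\mu)^{p/2}$ with $p<2$, i.e.\ \emph{sublinear} in $\mu$; it then shows the multiplier satisfies $\lambda_R\leq 1-\mu\tilde K$ with $\tilde K=\inf_{B_{R_2}}\Phi_K>0$ (Lemma \ref{chap3: lem: heavey core bound on lambda_R}), whence $B\rho_R+\mu\Phi_K\geq \mu\tilde K/2$ on $\{\rho_R>0\}$. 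Since $B\rho_R=o(\mu)$, for $\mu$ large the support is forced into the fixed bounded set $\{\Phi_K\geq\tilde K/4\}$ (Lemma \ref{chap3: lem: heavy core bound on support rho_R}), the $\rho_R$ stabilize, and one gets a global minimizer. Your binding inequality is meant to replace exactly this confinement mechanism.

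The gap is the uniform-in-$m$ strict subadditivity, and your fix for $m\to 0$ is circular: gluing a thin layer onto $\mathrm{supp}\,\rho_1$, ``which \ldots is itself confined near $K$,'' presupposes a quantitative confinement of near-minimizers of $\mathcal{I}_\mu(M-m)$, uniform over $m$ and over the near-minimizer --- but confinement of (near-)minimizers is precisely what the whole compactness argument is supposed to produce at the end, and nothing in the hypotheses gives it a priori. Adding the layer on top of $\rho_1$ also requires controlling $\int\bigl(A(\rho_1+\epsilon)-A(\rho_1)\bigr)\leq\epsilon\int A'(\rho_1+\epsilon)$, i.e.\ some $L^\infty$ or higher-integrability bound on an arbitrary near-minimizer, which is likewise unavailable before the Euler--Lagrange relation is in hand. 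Two smaller points: since $J=J(r)$ only, a blob escaping along the $x_3$-axis does not see $J_\infty$, so the ``problem at infinity'' must be taken as an infimum over escape directions (your choice $\mathcal{J}(m)-J_\infty m$ is the worst case only because $J$ is nondecreasing in $r$, which should be said); and the final regularity step needs $\rho_0\in L^p$ for some $p>3$ to make $B\rho_0\in C^1$ via Lemma \ref{chap2: lem: continuous diff of B rho}, which does not follow from $\rho_0\in L^1\cap L^{4/3}$ --- in the paper this is the content of the $L^\infty$ bootstrap and cannot be waved through.
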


\begin{theorem}\label{chap3: thm: existence of solution with large core gravity, constant angular velocity}
Given $M>0$, $\Omega(r)\equiv \Omega\geq 0$, $f$ satisfying \eqref{chap3: cond: f 1} and \eqref{chap3: cond: f 2}, and $\Phi_K$ satisfying \eqref{chap3: cond: Phi_K}, there is an $\mu_0 >0$, such that if $\mu > \mu_0$, there exists a compactly supported axisymmetric continuous function $\rho: \mathbb{R}^3\setminus C\to [0,\infty)$, such that
\begin{enumerate}
\item $\rho$ is differentiable where it is positive, and satisfies 
the $\mu \Phi_K$-modified Euler-Poisson equations there.\\
\item $\int_{\mathbb{R}^3\setminus K}\rho(\mathbf{x})~d\mathbf{x}=M$.
\end{enumerate} 
\end{theorem}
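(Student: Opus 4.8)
\emph{Strategy.} I would derive this from Theorem~\ref{chap3: thm: existence of solution with large core gravity, variable angular velocity} by truncating the rotation outside a large ball, in the same way that Theorem~\ref{chap3: thm: existence of solution with small Omega} is derived from Theorem~\ref{chap3: thm: existence of solution with small J}. A constant $\Omega$ gives $J(r)=\tfrac12\Omega^2 r^2$, which is not integrable and so violates \eqref{chap3: cond: J}; the point is that if $\mu$ is large the density is pulled into a fixed bounded neighborhood of a maximum point of $\Phi_K$, and on that set a constant $\Omega$ and a truncation of it produce the same $J$. Write $\Phi^\star:=\sup_{\mathbf{x}\in\mathbb{R}^3}\Phi_K(\mathbf{x})>0$ and fix $\mathbf{x}^\star$ with $\Phi_K(\mathbf{x}^\star)=\Phi^\star$.

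\emph{Step 1 (truncation).} Since $K$ is bounded and $\Phi_K\to0$ at infinity, fix $n$ with $K\subset B_n$ and $\sup_{|\mathbf{x}|\ge n}\Phi_K(\mathbf{x})<\tfrac12\Phi^\star$. Choose a continuous $\Omega_n\colon[0,\infty)\to[0,\infty)$ with $\Omega_n\equiv\Omega$ on $[0,n]$ and $\Omega_n\equiv0$ on $[n+1,\infty)$; then $s\Omega_n^2(s)\in L^1[0,\infty)\cap C[0,\infty)$, and $J_n(r):=\int_0^r s\Omega_n^2(s)\,ds$ equals $\tfrac12\Omega^2 r^2$ for $r\le n$ while being bounded. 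Applying Theorem~\ref{chap3: thm: existence of solution with large core gravity, variable angular velocity} with rotation profile $\Omega_n$ gives $\mu_0'>0$ such that for $\mu>\mu_0'$ there is a compactly supported, axisymmetric, continuous $\rho\colon\mathbb{R}^3\setminus K\to[0,\infty)$ with $\int_{\mathbb{R}^3\setminus K}\rho=M$, differentiable and solving \eqref{chap3: eq: equilibrium eq} (with $J_n$ and $\mu\Phi_K$ in place of $J$ and $\Phi_K$, for some $\lambda$) where $\rho>0$; moreover $\rho$ may be taken to be a minimizer of
\[
E_\mu(\sigma)=\int_{\mathbb{R}^3\setminus K}\Bigl(A(\sigma)-\tfrac12\sigma B\sigma-\sigma J_n-\mu\sigma\Phi_K\Bigr)\,d\mathbf{x}\quad\text{over }\Bigl\{\sigma\ge0:\int_{\mathbb{R}^3\setminus K}\sigma=M\Bigr\},
\]
so that $\rho=g(B\rho+J_n+\mu\Phi_K+\lambda)$, where $g:=(A')^{-1}$ on $[0,\infty)$ and $g\equiv0$ on $(-\infty,0]$; here, by \eqref{chap3: cond: f 1}--\eqref{chap3: cond: f 2} and \eqref{chap3: eq: define A}, $A'$ is a continuous strictly increasing bijection of $[0,\infty)$ onto itself with $A'(0)=0$, so $g$ is continuous, increasing, and $g(t)\to\infty$ as $t\to\infty$. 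Since \eqref{chap3: eq: Euler-Poisson} constrains $\rho$ only where $p=f(\rho)$ is differentiable, i.e.\ where $\rho>0$, and $J_n=J$ on $\{r\le n\}$, it suffices to show $\operatorname{supp}\rho\subset B_n$ once $\mu$ is large: that $\rho$ is then a solution of the $\mu\Phi_K$-modified Euler--Poisson equations with the prescribed constant $\Omega$ and mass $M$.

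\emph{Step 2 (a priori bounds and localization).} Comparing $E_\mu(\rho)$ with $E_\mu(\psi)$ for a \emph{fixed} bump $\psi\ge0$ of mass $M$ supported near $\mathbf{x}^\star$, and absorbing $\tfrac12\int\rho B\rho$ via the Hardy--Littlewood--Sobolev inequality together with the superlinearity \eqref{chap3: cond: f 2} (which bounds $\int\rho^{4/3}$ by $\int A(\rho)$ up to additive constants), one obtains $\int A(\rho)=O(\mu)$; the regularity argument used to produce $\rho$ in Theorem~\ref{chap3: thm: existence of solution with large core gravity, variable angular velocity}, run with these bounds, then yields the crucial uniform estimate $\|B\rho\|_{\infty}=o(\mu)$ as $\mu\to\infty$ (with $n$ fixed). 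Suppose now $\rho(\mathbf{x}_0)>0$ for some $\mathbf{x}_0$ with $|\mathbf{x}_0|\ge n$; then $\Phi_K(\mathbf{x}_0)<\tfrac12\Phi^\star$, and since $A'(\rho(\mathbf{x}_0))>0$, \eqref{chap3: eq: equilibrium eq} gives
\[
-\lambda<B\rho(\mathbf{x}_0)+J_n(r_0)+\mu\Phi_K(\mathbf{x}_0)\le\|B\rho\|_{\infty}+J_n(\infty)+\tfrac12\mu\Phi^\star=\tfrac12\mu\Phi^\star+o(\mu).
\]
Hence, on the fixed nonempty bounded open set $V:=\{\mathbf{x}:\Phi_K(\mathbf{x})>\tfrac78\Phi^\star\}$ (it contains $\mathbf{x}^\star$), every $\mathbf{x}\in V$ satisfies
\[
B\rho(\mathbf{x})+J_n(r)+\mu\Phi_K(\mathbf{x})+\lambda\ge\tfrac78\mu\Phi^\star-\tfrac12\mu\Phi^\star-o(\mu)=\tfrac38\mu\Phi^\star-o(\mu),
\]
which exceeds $\tfrac14\mu\Phi^\star>0$ for $\mu$ large; therefore $\rho\ge g\bigl(\tfrac14\mu\Phi^\star\bigr)$ on $V$, and $M=\int\rho\ge|V|\,g\bigl(\tfrac14\mu\Phi^\star\bigr)\to\infty$, a contradiction. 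Thus $\operatorname{supp}\rho\subset B_n$ for $\mu$ large, and choosing $\mu_0\ge\mu_0'$ large enough that this holds proves the theorem.

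\emph{Main obstacle.} The one substantive point is the uniform-in-$\mu$ bound $\|B\rho\|_{\infty}=o(\mu)$: a priori the minimizer might develop a very tall, narrow spike as the core attraction is amplified, which would make $B\rho$ large. It is exactly the superlinear growth of $A$ at infinity from \eqref{chap3: cond: f 2} --- through $\int A(\rho)=O(\mu)$, which caps how concentrated the mass can be --- combined with standard elliptic bootstrapping for the Poisson equation that excludes this, and this is the same mechanism appearing in the interior-regularity part of the proof of Theorem~\ref{chap3: thm: existence of solution with large core gravity, variable angular velocity}. The remaining ingredients --- the truncation, the super-level-set identity $\{\rho>0\}=\{B\rho+J_n+\mu\Phi_K+\lambda>0\}$, and the energy comparison bounding $\int A(\rho)$ --- are routine.
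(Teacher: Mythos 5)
Your overall strategy---truncate the quadratic $J$ of a constant rotation outside a fixed radius, invoke the variable-$\Omega$ result for the bounded truncation, and then show the support of the minimizer lies inside the region where the truncation is inactive---is exactly the paper's strategy, and your identification of the key uniform estimate $\|B\rho\|_{\infty}=o(\mu)$ (inequality \eqref{chap3: ineq: heavy core sublinear growth in mu of Brho_R}, from Lemma \ref{chap3: lem: l infty bound on rho_R for heavy core}) as the main technical input is also correct. The paper, however, gets the localization for free: Lemma \ref{chap3: lem: heavy core bound on support rho_R} already produces a support radius $R_3$ depending only on $f$, $M$, $\Phi_K$ (not on $J$ or $\mu$), via the Lagrange-multiplier bound $\lambda_R\leq 1-\mu\tilde{K}$ with $\tilde{K}=\inf_{B_{R_2}}\Phi_K$; one simply truncates $J$ at that $R_3$. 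Your Step 2 replaces this with a new argument (a lower bound on $\rho$ on a super-level set of $\Phi_K$ contradicting the mass constraint), which is where a genuine gap appears.

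The gap: you set $\Phi^\star=\sup_{\mathbb{R}^3}\Phi_K$ and derive the contradiction from $M\geq |V|\,g\bigl(\tfrac14\mu\Phi^\star\bigr)$ with $V=\{\Phi_K>\tfrac78\Phi^\star\}$. But $\Phi_K$ is defined on all of $\mathbb{R}^3$ and nothing in \eqref{chap3: cond: Phi_K} prevents its maximum from being attained strictly inside the core $K$ (for $\Phi_K=B\rho_K$ this is in fact the typical situation, and by the maximum principle $\sup_{\mathbb{R}^3\setminus K}\Phi_K=\sup_{\partial K}\Phi_K$ can be far smaller than $\Phi^\star$). Then $V$ may be entirely contained in $K$, where $\rho$ is not defined and the variational inequality $A'(\rho)\geq B\rho+J_n+\mu\Phi_K+\lambda$ does not hold, so the lower bound $\rho\geq g(\tfrac14\mu\Phi^\star)$ on $V$ and the ensuing mass blow-up are unavailable; worse, in that regime your two inequalities ($-\lambda\leq\tfrac12\mu\Phi^\star+o(\mu)$ from a point outside $B_n$, versus $\mu\Phi_K+\lambda\geq\tfrac38\mu\Phi^\star-o(\mu)$ on $V$) need never both refer to admissible points, so no contradiction is reached. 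The repair is routine but necessary: work throughout with $\Phi^\dagger:=\sup_{\mathbb{R}^3\setminus K}\Phi_K>0$, choose $n$ with $\sup_{|\mathbf{x}|\geq n}\Phi_K<\tfrac12\Phi^\dagger$, and take $V$ to be a nonempty open subset of $\mathbb{R}^3\setminus \overline{K}$ on which $\Phi_K>\tfrac34\Phi^\dagger$; with that change your Step 2 goes through and is a legitimate alternative to the paper's Lemma \ref{chap3: lem: heavy core bound on support rho_R}.
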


Theorem \ref{chap3: thm: existence of solution with large core gravity, variable angular velocity} and Theorem \ref{chap3: thm: existence of solution with large core gravity, constant angular velocity} establish existence of rotating planet solutions with given mass and angular velocity profile for sufficiently large core potential. 

Finally, in order to describe a non-existence theorem for fast constant rotation, we need some further assumptions on the equation of state $f$.
\begin{equation}\label{chap3: cond: f cond nonexistence}
\liminf_{s \to \infty}f(s)s^{-\gamma}>0, \text{ for some }\gamma>\frac{4}{3}.
\end{equation}
$f(s)$ is continuously differentiable for $s>0$ and 
\begin{equation}\label{chap3: cond: f continuous differentiability}
\liminf_{s\to 0}f'(s)s^{-\mu}>0
\end{equation}
for some $\mu>0$. A typical example of such an $f$ is again given by $f(s)=s^{\gamma}$ for some $\gamma >\frac{4}{3}$.

\begin{theorem}\label{chap3: thm: nonexistence of solution}
Suppose $f$ satisfies \eqref{chap3: cond: f 1}, \eqref{chap3: cond: f 2}, \eqref{chap3: cond: f cond nonexistence} and \eqref{chap3: cond: f continuous differentiability}. Let $\Phi_K$ be given by \eqref{chap3: eq: Phi_K by gravity}, and let $M>0$ be given. Also assume that $K$ satisfies the ``no trapping'' condition:
\begin{itemize}
\item[] If $(x,y,z)\in \mathbb{R}^3 \setminus K$, then the half line $(x,y,z)+t(x,y,0)$, ($t\geq 0$) also belongs to $\mathbb{R}^3 \setminus K$.
\end{itemize}
Then there exists an $\Omega_0>0$ such that for $\Omega(r)\equiv \Omega > \Omega_0$, there does not exist a bounded continuous function $\rho: \mathbb{R}^3\setminus C\to [0,\infty)$, such that
\begin{enumerate}
\item $\rho$ satisfies \eqref{chap3: eq: equilibrium eq} where positive.
\item $\int_{\mathbb{R}^3\setminus K}\rho(\mathbf{x})~d\mathbf{x}=M$.
\end{enumerate}
\end{theorem}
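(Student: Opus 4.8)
The plan is to show that large $\Omega$ forces the support of any putative solution into a tube around the symmetry axis whose radius shrinks like $1/\Omega$, which is incompatible with carrying the fixed mass $M$. Throughout write $J(r)=\tfrac12\Omega^2r^2$, assume for contradiction that solutions $\rho$ (with Lagrange constant $\lambda$ and support $S=\{\rho>0\}$) exist for arbitrarily large $\Omega$, and recall $A'(0)=0$. \emph{Step 1 (radial localization).} On $S$ one has $A'(\rho)=\lambda+B\rho+J+\Phi_K\ge\lambda+J(r)$ while $A'(\rho)\le A'(\|\rho\|_\infty)$, so $S$ is bounded in $r$. Fix $\mathbf x_0\in S$ off the axis and slide it radially outward along $\mathbf x_0+t(x_0,y_0,0)$; by the no-trapping hypothesis this stays in $\mathbb R^3\setminus K$, and since $S$ is $r$-bounded it eventually exits $S$. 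At the first exit point $\mathbf p$ one has $\rho(\mathbf p)=0$, and passing the equilibrium relation to the limit from inside $S$ (all terms are continuous, $A'(0)=0$) gives $0=\lambda+B\rho(\mathbf p)+J(r_{\mathbf p})+\Phi_K(\mathbf p)$; since the last three terms are $\ge0$ and $\Phi_K>0$, this forces $\lambda<0$. Because $r_{\mathbf p}\ge r(\mathbf x_0)$ it also gives $J(r(\mathbf x_0))\le-\lambda$ for every $\mathbf x_0\in S$, so $S\subseteq\{r\le R_*\}$ with $\tfrac12\Omega^2R_*^2=-\lambda$, i.e.\ $R_*=\sqrt{-2\lambda}/\Omega$.

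\emph{Step 2 (a priori bounds via virial identities).} Since $r\le R_*$ on $S$ we have $\lambda+J(r)\le0$ there, so the equilibrium relation gives the pointwise inequality $0\le A'(\rho)\le B\rho+\Phi_K$, hence $\|\rho\|_\infty\le(A')^{-1}(\|B\rho\|_\infty+\|\Phi_K\|_\infty)$. Next I would exploit the Euler--Poisson equation $\nabla p=\rho\nabla(B\rho+J+\Phi_K)$ in two ways. Pairing with the position field $\mathbf x$ and integrating yields $3\int p=W-2T-\int\rho\,\mathbf x\!\cdot\!\nabla\Phi_K$, where $W=\tfrac12\int\rho B\rho$ and $T=\int\rho J$; by axisymmetry $\mathbf x\!\cdot\!\nabla\Phi_K=r\partial_r\Phi_K+x_3\partial_{x_3}\Phi_K$, and using the vertical monotonicity of $\Phi_K$ for $|x_3|>z_0$ together with the decay of $\nabla\Phi_K$ one checks $\mathbf x\!\cdot\!\nabla\Phi_K$ is globally bounded by a constant depending only on $\rho_K$, so $\int\rho\,\mathbf x\!\cdot\!\nabla\Phi_K$ is at most $C(\rho_K)M$. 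Pairing instead with $\rho$ and integrating the equilibrium relation (and using $sA'(s)=A(s)+f(s)$) gives $-\lambda M=2W+T+\int\rho\Phi_K-\int p-\int A(\rho)$. Dropping the last two nonnegative terms, using $\int\rho\Phi_K\le\|\Phi_K\|_\infty M$, and — crucially — using the first identity to bound $T\le\tfrac12W+C(\rho_K)M$ rather than the circular $T\le-\lambda M$, one gets $-\lambda\le c\,\|B\rho\|_\infty+C(\rho_K)$ with $c$ absolute. The point is that the centrifugal term no longer appears on the right, so the self-reference created by the rotation is broken.

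\emph{Step 3 (collapse and contradiction).} With $S$ in the tube $\{r\le R_*\}$ and vertical extent bounded — the latter obtained from a supersolution/barrier argument for the equilibrium relation, where the regularity hypothesis \eqref{chap3: cond: f continuous differentiability} on $f$ near $0$ is used — the Newtonian potential obeys the refined estimate $\|B\rho\|_\infty\le C\|\rho\|_\infty R_*^2\log(1/R_*)$ (rather than the cruder $C\|\rho\|_\infty^{2/3}$), because column densities in such a thin tube are $O(\|\rho\|_\infty R_*^2)$. Feeding this into $\|\rho\|_\infty\le(A')^{-1}(\|B\rho\|_\infty+\|\Phi_K\|_\infty)$, into $-\lambda\le c\|B\rho\|_\infty+C(\rho_K)$, into the mass constraint $M\le\|\rho\|_\infty\cdot\pi R_*^2\cdot(\text{vertical extent})$, into $R_*^2=2(-\lambda)/\Omega^2$, and into the growth hypothesis \eqref{chap3: cond: f cond nonexistence} (which gives $A'(s)\ge c_1s^{\gamma-1}$ for large $s$, $\gamma>\tfrac43$), forces $-\lambda$, hence $\Omega R_*$, to be bounded by a constant independent of $\Omega$. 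Therefore $R_*=O(1/\Omega)\to0$ and $S$ collapses onto the axis; but then $M=\int_{\{r\le R_*\}}\rho\le\|\rho\|_\infty\,\pi R_*^2\,(\text{vertical extent})\to0$, contradicting $M>0$.

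The hard part will be the quantitative bookkeeping in Step 3: one must follow precisely how the exponents $\gamma$ (from \eqref{chap3: cond: f cond nonexistence}) and $\mu$ (from \eqref{chap3: cond: f continuous differentiability}) interact with the logarithmic gain from radial thinness, and one must first rule out degenerate support geometries — a toroidal $S$ enclosing a mass-free cylinder around the axis, and an unbounded vertical column — before the refined potential estimate and the collapse argument apply. This is exactly where the no-trapping condition on $K$ (which prevents the gas from hiding radially inside a cavity of the core) and the vertical monotonicity of $\Phi_K$ enter essentially.
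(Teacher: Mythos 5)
Your overall skeleton (force the support into a thin tube of radius $O(1/\Omega)$ with uniformly bounded height, then contradict $\int\rho=M$) is the same as the paper's, and your Step 1 is correct -- in fact your ray-to-the-first-vanishing-point argument recovers the paper's bound $\lambda\le-\frac12\Omega^2 d^2$ more cleanly than the Rolle's-theorem argument used there. But the two pillars that make the tube thin and short are both problematic. First, your Step 2 rests on the virial identity $3\int p = W-2T-\int\rho\,\mathbf{x}\cdot\nabla\Phi_K$, obtained by pairing $\nabla p=\rho\nabla(B\rho+J+\Phi_K)$ with $\mathbf{x}$ and integrating by parts. The integration is over $\mathbb{R}^3\setminus K$, and $\rho$ (hence $p=f(\rho)$) need not vanish on $\partial K$, so the identity acquires a boundary term $\int_{\partial K}p\,(\mathbf{x}\cdot\mathbf{n})\,dS$ of no definite sign, for which the theorem supplies no control ($K$ is only assumed bounded, axisymmetric and non-trapping; nothing is assumed about the regularity of $\partial K$ or the behaviour of $\rho$ there). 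This is precisely the obstruction the paper cites for abandoning the integral-identity (Li-type) approach in the presence of a core; it instead bounds $d$ \emph{pointwise}, by differentiating the equilibrium relation radially at a point where $(A'(\rho))_r\le0$ and using $|(B\rho)_r|\le C\|\rho\|_\infty d$ together with $(\Phi_K)_r(0,\theta,z)=0$ and the $C^{0,3/q}$ H\"older continuity of $\nabla\Phi_K$ to get $\Omega^2 d/2\le \tilde C(d+d^{3/q})$, which needs no integration by parts at all. (Your identity could perhaps be salvaged if $K$ were star-shaped, or by estimating the boundary term via the uniform $L^\infty$ bound on $\rho$ under extra regularity of $\partial K$, but as written it is a gap.)

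Second, the uniform bound on the vertical extent of the support is asserted, not proved: you invoke an unspecified ``supersolution/barrier argument'' using \eqref{chap3: cond: f continuous differentiability}. This is the most delicate part of the paper's proof and it is not a barrier argument for the scalar relation: the paper first proves uniform H\"older continuity of $\rho$ away from the core slab (using \eqref{chap3: cond: f continuous differentiability} to invert $A'$ H\"older-continuously), upgrades $B\rho$ to $C^{1,1}$ there by Schauder estimates, and then compares the differentiated radial force balance at two points on a horizontal segment to show that for $\Omega$ large the support cannot meet $\{|x_3|>l\}$ at all. Without some such argument your final step fails, since the mass could a priori be spread along an arbitrarily long thin column and $M\le\|\rho\|_\infty\,\pi R_*^2\cdot(\text{vertical extent})$ yields no contradiction. (By contrast, the ``toroidal support'' worry and the logarithmically refined potential estimate in your Step 3 are unnecessary: once $\|\rho\|_\infty\le C_1$, $d\to0$ and the height is bounded, the crude volume bound already finishes the proof, exactly as in the paper.)
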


Theorems \ref{chap3: thm: existence of solution with small J} and \ref{chap3: thm: existence of solution with small Omega} are proved in section \ref{chap3: slow rotation}. Theorems \ref{chap3: thm: existence of solution with large core gravity, variable angular velocity} and \ref{chap3: thm: existence of solution with large core gravity, constant angular velocity} are proved in section \ref{chap3: dense core}. Theorem \ref{chap3: thm: nonexistence of solution} is proved in section \ref{chap3: non-existence}.

\section{Variational Formulation}
\label{chap3: variational}

We first need a few convolution inequalities. These lemmas turn out to be quite useful for the rotating star existence theory. Their proofs can be found in \cite{auchmuty1971variational}.

\begin{lemma}\label{chap2: lem: L r bound on B rho}
Suppose $\rho\in L^1(\mathbb{R}^3) \cap L^p(\mathbb{R}^3)$, and $1<p\leq \frac{3}{2}$. Then $B\rho \in L^r(\mathbb{R}^3)$ for all $3<r<\frac{3p}{3-2p}$, and
\begin{equation}\label{chap2: ineq: convolution}
\| B \rho \| _r \leq C(\| \rho \|_1 ^b\| \rho \|_p ^{1-b} + \| \rho \|_1 ^c \| \rho \|_p^{1-c})
\end{equation}
for some constant $C$ and $0<b,c<1$ depending on $p$ and $r$. If $p>\frac{3}{2}$, then $B\rho $ is bounded and continuous and satisfies \eqref{chap2: ineq: convolution} with $r=\infty$.
\end{lemma}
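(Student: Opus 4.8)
The plan is to estimate $B\rho = \rho * |\mathbf{x}|^{-1}$ by splitting the Newtonian kernel at a radius $R>0$ into a singular near part $N_R(\mathbf{x}) = |\mathbf{x}|^{-1}\mathbf{1}_{\{|\mathbf{x}|\le R\}}$ and a decaying far part $F_R(\mathbf{x}) = |\mathbf{x}|^{-1}\mathbf{1}_{\{|\mathbf{x}|> R\}}$. A direct computation in spherical coordinates shows that $N_R \in L^s(\mathbb{R}^3)$ precisely for $s<3$ and $F_R\in L^s(\mathbb{R}^3)$ precisely for $s>3$, with $\|N_R\|_s = c_s R^{3/s-1}$ and $\|F_R\|_s = c_s' R^{3/s-1}$. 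First I would bound the near part against the $L^p$ norm of $\rho$ and the far part against the $L^1$ norm: by Young's convolution inequality, $\|\rho * N_R\|_r \le \|\rho\|_p\,\|N_R\|_s$ where $\tfrac1s = 1+\tfrac1r-\tfrac1p$, and $\|\rho * F_R\|_r \le \|\rho\|_1\,\|F_R\|_r$ (this is where the $\|\rho\|_1$ factor enters: the far field of the potential is governed by the total mass). Finiteness of $\|N_R\|_s$ forces $s<3$, which unwinds to exactly $r<\tfrac{3p}{3-2p}$, while finiteness of $\|F_R\|_r$ forces $r>3$; together these reproduce the stated range $3<r<\tfrac{3p}{3-2p}$.

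Summing the two pieces gives $\|B\rho\|_r \le C_1\|\rho\|_p R^{\alpha} + C_2\|\rho\|_1 R^{\beta}$ with $\alpha = 2+\tfrac3r-\tfrac3p$ and $\beta = \tfrac3r-1$. For $r$ strictly inside the interval one checks $\alpha>0>\beta$, so the right-hand side is a genuine competition between a growing and a decaying power of $R$. The next step is to eliminate the free parameter: choosing $R$ to balance the two terms (equivalently, $R$ a suitable power of $\|\rho\|_1/\|\rho\|_p$) collapses the bound into a scale-invariant product $C\|\rho\|_1^{b}\|\rho\|_p^{1-b}$ with $b = \alpha/(\alpha-\beta)$, and since $\alpha-\beta = 3-\tfrac3p>0$ while $0<\alpha<\alpha-\beta$, the exponent $b$ lies in $(0,1)$. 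This already yields the estimate; the two-term form in the statement then follows a fortiori, or more naturally by homogenizing the near and far contributions separately — interpolating $\rho$ between $L^1$ and $L^p$ at the two intermediate exponents dictated by the two Young pairings — which produces the exponents $b$ and $c$ in $(0,1)$ as asserted. As an alternative, cleaner route, the whole estimate follows in one stroke from the Hardy--Littlewood--Sobolev inequality applied at the intermediate exponent $q$ with $\tfrac1q=\tfrac1r+\tfrac23$ (which satisfies $1<q<p$ exactly when $3<r<\tfrac{3p}{3-2p}$), combined with the interpolation $\|\rho\|_q\le\|\rho\|_1^{\theta}\|\rho\|_p^{1-\theta}$.

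For the endpoint case $p>\tfrac32$, the conjugate exponent satisfies $p'<3$, so $N_R\in L^{p'}$ and Hölder's inequality gives the pointwise bound $|\rho * N_R(\mathbf{x})|\le\|\rho\|_p\,\|N_R\|_{p'}$, while $F_R\le R^{-1}$ yields $|\rho * F_R(\mathbf{x})|\le R^{-1}\|\rho\|_1$; balancing $R$ once more produces the homogeneous bound with $r=\infty$. Continuity of $B\rho$ I would obtain from continuity of translation in $L^p$ (for the near part) together with dominated convergence (for the far part). The main obstacle is not conceptual but the exponent bookkeeping: one must verify that the relation $\tfrac1s=1+\tfrac1r-\tfrac1p$ together with the integrability thresholds $s<3$ and $r>3$ carve out exactly the advertised open interval, and that the balancing of $R$ keeps the interpolation exponents strictly inside $(0,1)$ throughout, degenerating only at $r=3$ and $r=\tfrac{3p}{3-2p}$ — which is precisely why those endpoints are excluded.
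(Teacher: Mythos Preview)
Your argument is correct. The paper does not actually prove this lemma; it simply records the statement and defers to Auchmuty--Beals \cite{auchmuty1971variational} for the proof. Your kernel-splitting via $N_R + F_R$ followed by Young's inequality is precisely the standard argument found there, and your alternative one-line route through Hardy--Littlewood--Sobolev plus $L^1$--$L^p$ interpolation is in fact slightly sharper (it yields a single homogeneous term rather than the two-term sum in the stated form). The two-term form in the statement reflects the version of the argument where one fixes $R$ and interpolates separately on each piece rather than optimizing, exactly as you note; either way, nothing is missing.
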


\begin{lemma}\label{chap2: lem: bound on gravity potential}
If $\rho \in L^1(\mathbb{R}^3) \cap L^{4/3}(\mathbb{R}^3)$, then
\begin{equation}
\bigg|\int_{\mathbb{R}^3} \rho B\rho ~d\mathbf{x} \bigg|\leq C \bigg( \int_{\mathbb{R}^3}|\rho|^{4/3} ~d\mathbf{x}\bigg) \bigg(\int_{\mathbb{R}^3}|\rho| ~d\mathbf{x}\bigg)^{2/3}.
\end{equation}
\end{lemma}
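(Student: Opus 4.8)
The plan is to reduce everything to the symmetric bilinear form
$\int \rho\, B\rho = \iint \frac{\rho(\mathbf{x})\rho(\mathbf{y})}{|\mathbf{x}-\mathbf{y}|}\,d\mathbf{x}\,d\mathbf{y}$
and to estimate it by splitting the kernel according to the scale of $|\mathbf{x}-\mathbf{y}|$. Since the kernel is positive we may pass to $|\rho|$ and assume $\rho\ge 0$ throughout. Fix a parameter $R>0$ and write $|\mathbf{x}-\mathbf{y}|^{-1} = K_R(\mathbf{x}-\mathbf{y}) + |\mathbf{x}-\mathbf{y}|^{-1}\mathbf{1}_{|\mathbf{x}-\mathbf{y}|\ge R}$, where $K_R(\mathbf{z}) = |\mathbf{z}|^{-1}\mathbf{1}_{|\mathbf{z}|<R}$ carries the singular part. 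The far contribution is crude: bounding $|\mathbf{x}-\mathbf{y}|^{-1}\le R^{-1}$ on its support gives $\iint_{|\mathbf{x}-\mathbf{y}|\ge R}\frac{\rho(\mathbf{x})\rho(\mathbf{y})}{|\mathbf{x}-\mathbf{y}|}\,d\mathbf{x}\,d\mathbf{y}\le R^{-1}\|\rho\|_1^2$, which is homogeneous of degree $-1$ in $R$.

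For the near contribution I would write it as a pairing $\int \rho\,(\rho * K_R)\,d\mathbf{x}$ and deliberately avoid estimating $B\rho$ pointwise, because $|\mathbf{z}|^{-1}$ is \emph{not} locally integrable to the power $4$, the Hölder dual of $4/3$. Instead I apply Hölder with exponents $4/3$ and $4$, followed by Young's convolution inequality $\|\rho * K_R\|_4 \le \|\rho\|_{4/3}\|K_R\|_2$, whose exponents check out since $\tfrac34+\tfrac12 = 1+\tfrac14$. As $\|K_R\|_2^2 = \int_{|\mathbf{z}|<R}|\mathbf{z}|^{-2}\,d\mathbf{z} = 4\pi R$, this yields a near-part bound of the form $C R^{1/2}\|\rho\|_{4/3}^2$, homogeneous of degree $1/2$ in $R$.

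Adding the two pieces gives $\bigl|\int\rho\, B\rho\bigr| \le C R^{1/2}\|\rho\|_{4/3}^2 + R^{-1}\|\rho\|_1^2$, and the final step is to optimize in $R$. Balancing the two terms, i.e. taking $R \sim \bigl(\|\rho\|_1^2/\|\rho\|_{4/3}^2\bigr)^{2/3}$, makes both of order $\|\rho\|_{4/3}^{4/3}\|\rho\|_1^{2/3}$, which is exactly $C\bigl(\int|\rho|^{4/3}\bigr)\bigl(\int|\rho|\bigr)^{2/3}$. A scaling check confirms this is forced: under $\rho\mapsto a\,\rho(\cdot/\lambda)$ the left side scales like $a^2\lambda^5$, and so does the right side, so no other combination of exponents could come out of the optimization.

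The only genuine obstacle is the near-field term: the singularity $|\mathbf{z}|^{-1}$ sits precisely at the borderline where the naive ``Hölder the kernel against $\rho$'' estimate diverges, so one must exploit the convolution structure via Young's inequality rather than bound $B\rho$ in $L^\infty$ on small balls. Everything else — the far-field bound and the one-variable optimization over $R$ — is routine. As an alternative one can invoke the Hardy–Littlewood–Sobolev inequality in $\mathbb{R}^3$ with the conjugate pair $(6/5,6/5)$ to get $\bigl|\int\rho\, B\rho\bigr|\le C\|\rho\|_{6/5}^2$ and then interpolate $\|\rho\|_{6/5}\le \|\rho\|_1^{1/3}\|\rho\|_{4/3}^{2/3}$; squaring reproduces the same inequality. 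I prefer the elementary splitting, however, since it is self-contained and keeps the constant $C$ explicit.
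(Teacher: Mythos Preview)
Your argument is correct. Both routes you outline --- the near/far splitting with Young's convolution inequality followed by optimization in $R$, and the Hardy--Littlewood--Sobolev inequality with the pair $(6/5,6/5)$ followed by $L^p$ interpolation --- are standard and yield the stated bound; the exponent arithmetic checks out in each case.

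The paper itself does not supply a proof of this lemma: it simply cites Auchmuty--Beals \cite{auchmuty1971variational}, where the result is established. The argument there is essentially your splitting argument (estimate the convolution kernel separately on $|\mathbf{x}-\mathbf{y}|<R$ and $|\mathbf{x}-\mathbf{y}|\ge R$, then optimize). So your primary proof is in the same spirit as the reference the paper defers to, and your HLS alternative is a slicker but less elementary variant.
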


\begin{lemma}\label{chap2: lem: continuous diff of B rho}
If $\rho\in L^1(\mathbb{R}^3) \cap L^p(\mathbb{R}^3)$ for some $p>3$, then $B\rho$ is continuously differentiable.
\end{lemma}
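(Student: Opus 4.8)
The plan is to produce the gradient of $B\rho$ explicitly and then justify differentiation under the integral sign. Formally one expects
\[
\partial_{x_i}B\rho(\mathbf{x}) = -\int_{\mathbb{R}^3}\frac{(x_i-y_i)\,\rho(\mathbf{y})}{|\mathbf{x}-\mathbf{y}|^3}\,d\mathbf{y} =: v_i(\mathbf{x}),
\]
so I would first verify that each $v_i$ is a well-defined continuous function, and then show that it is genuinely the $i$-th partial derivative of $B\rho$. For well-definedness I split the integral over $\{|\mathbf{x}-\mathbf{y}|<1\}$ and $\{|\mathbf{x}-\mathbf{y}|\geq 1\}$. On the far region the kernel is bounded by $1$ and $\rho\in L^1$ controls the integral; on the near region the kernel is dominated by $|\mathbf{x}-\mathbf{y}|^{-2}$, and H\"older's inequality against $\rho\in L^p$ produces a factor $\big(\int_{|\mathbf{z}|<1}|\mathbf{z}|^{-2p'}\,d\mathbf{z}\big)^{1/p'}$, where $p'=p/(p-1)$. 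This integral is finite precisely when $2p'<3$, i.e. $p'<\tfrac32$, i.e. $p>3$; this is exactly where the hypothesis is used.

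To prove $v_i=\partial_{x_i}B\rho$ I regularize the singular kernel. Let $\gamma_\epsilon$ be a $C^1$ function on $[0,\infty)$ equal to $s\mapsto 1/s$ for $s\geq\epsilon$ and equal to $(3\epsilon^2-s^2)/(2\epsilon^3)$ for $0\le s<\epsilon$; one checks this is $C^1$ at $s=\epsilon$ and satisfies $0\le\gamma_\epsilon(s)\le 3/(2\epsilon)$ and $|\gamma_\epsilon'(s)|\le 1/\epsilon^2$ on $[0,\epsilon)$. Then $\eta_\epsilon(\mathbf{z}):=\gamma_\epsilon(|\mathbf{z}|)$ lies in $C^1(\mathbb{R}^3)$ (near the origin it is the polynomial $(3\epsilon^2-|\mathbf{z}|^2)/(2\epsilon^3)$), and
\[
w_\epsilon(\mathbf{x}) := \int_{\mathbb{R}^3}\eta_\epsilon(\mathbf{x}-\mathbf{y})\,\rho(\mathbf{y})\,d\mathbf{y}
\]
is $C^1$, with the derivative obtained by routine differentiation under the integral sign, since $\eta_\epsilon$ and its $\mathbf{x}$-gradient are bounded and $\rho\in L^1$.

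The heart of the argument is to show that $w_\epsilon\to B\rho$ and $\nabla w_\epsilon\to\mathbf{v}=(v_1,v_2,v_3)$ uniformly as $\epsilon\to0$. Both differences are supported on $\{|\mathbf{x}-\mathbf{y}|<\epsilon\}$ because $\eta_\epsilon$ agrees with the true kernel outside that set. For the gradient,
\[
v_i(\mathbf{x})-\partial_{x_i}w_\epsilon(\mathbf{x}) = \int_{|\mathbf{x}-\mathbf{y}|<\epsilon}\Big[-\frac{x_i-y_i}{|\mathbf{x}-\mathbf{y}|^3}-\gamma_\epsilon'(|\mathbf{x}-\mathbf{y}|)\frac{x_i-y_i}{|\mathbf{x}-\mathbf{y}|}\Big]\rho(\mathbf{y})\,d\mathbf{y},
\]
and on this region both bracketed terms are bounded by $|\mathbf{x}-\mathbf{y}|^{-2}$ (using $|\gamma_\epsilon'(s)|\le 1/\epsilon^2$ together with $s<\epsilon$). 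H\"older's inequality then gives the uniform bound $C\|\rho\|_p\big(\int_{|\mathbf{z}|<\epsilon}|\mathbf{z}|^{-2p'}\,d\mathbf{z}\big)^{1/p'}=C'\epsilon^{(3-2p')/p'}\to0$. The analogous, easier estimate (with $|\mathbf{z}|^{-p'}$ in place of $|\mathbf{z}|^{-2p'}$) gives $w_\epsilon\to B\rho$ uniformly. Since each $w_\epsilon$ is $C^1$, $w_\epsilon\to B\rho$ pointwise, and $\nabla w_\epsilon$ converges uniformly to the continuous field $\mathbf{v}$, the standard theorem on uniform convergence of derivatives yields that $B\rho$ is $C^1$ with $\nabla B\rho=\mathbf{v}$; continuity of $\mathbf{v}$ comes for free as the uniform limit of the continuous $\nabla w_\epsilon$.

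The main obstacle is the uniform convergence $\nabla w_\epsilon\to\mathbf{v}$: it rests entirely on the local $L^{p'}$-integrability of the kernel $|\mathbf{z}|^{-2}$, which fails for $p\le 3$, so the borderline exponent is sharp in this argument. The remaining bookkeeping—verifying the $C^1$ matching of $\gamma_\epsilon$ at $|\mathbf{z}|=\epsilon$ and at the origin, and the derivative bounds—is routine.
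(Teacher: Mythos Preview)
Your argument is correct: the regularized kernel $\gamma_\epsilon$ is $C^1$, the Hölder bound with $|\mathbf z|^{-2}\in L^{p'}_{\mathrm{loc}}$ (exactly when $p>3$) gives uniform convergence of $\nabla w_\epsilon$ to $\mathbf v$, and the standard $C^1$ limit theorem closes the proof. The paper itself does not prove this lemma at all---it simply refers the reader to Auchmuty--Beals \cite{auchmuty1971variational} for the three convolution estimates (Lemmas~\ref{chap2: lem: L r bound on B rho}--\ref{chap2: lem: continuous diff of B rho})---so your write-up is a self-contained substitute for that citation rather than a different route to the same destination. The Auchmuty--Beals argument is in the same spirit (near/far splitting and Hölder on the singular part); your explicit choice of the quadratic mollifier $(3\epsilon^2-s^2)/(2\epsilon^3)$ makes the $C^1$ matching and the derivative bounds completely transparent, which is a nice touch.
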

As \cite{auchmuty1971variational} and \cite{li1991uniformly}, we will solve this problem via a variational approach. Let us consider the energy functional
\begin{equation}\label{chap3: energy functional}
E(\rho)=\int_{\mathbb{R}^3\setminus K}\bigg( A(\rho)(\mathbf{x}) -\frac{1}{2}\rho(\mathbf{x}) B\rho(\mathbf{x}) -\rho(\mathbf{x}) J(\mathbf{x}) -\rho(\mathbf{x})\Phi_K(\mathbf{x})\bigg) ~d\mathbf{x},
\end{equation}
where $A$ is given by \eqref{chap3: eq: define A}, on the space of admissible functions
\begin{equation*}
W=\bigg\{\rho: \mathbb{R}^3\setminus K \to \mathbb{R},  ~\rho \text{ is axisymmetric}, ~\rho \geq 0 \text{ a.e.}, ~\int_{\mathbb{R}^3\setminus K }A(\rho) < \infty, \int_{\mathbb{R}^3\setminus K }\rho = M \bigg\}.
\end{equation*}
We first verify that $E$ is well-defined on $W$. From \eqref{chap3: cond: f 2}, it follows easily that
\begin{equation}\label{chap3: cond: A}
\lim_{s\to 0} A(s)s^{-\frac{4}{3}} = 0, \quad \lim_{s\to \infty}A(s)s^{-\frac{4}{3}}=\infty .
\end{equation}
\eqref{chap3: cond: f 1} and \eqref{chap3: cond: A} imply the existence of a $c>0$ such that
\begin{equation}
A(s)\geq c s^{4/3}
\end{equation}
for $s>1$. Hence
\begin{align}
\int \rho^{4/3} &\leq \frac{1}{c} \int A(\rho) + \int_{\rho<1}\rho^{4/3} \notag \\
& \leq \frac{1}{c} \int A(\rho) + M. \label{chap3: ineq: bound on 4/3 norm}
\end{align}
\eqref{chap3: ineq: bound on 4/3 norm} and lemma \ref{chap2: lem: bound on gravity potential} give the finiteness of the second term in \eqref{chap3: energy functional}. The last two terms in \eqref{chap3: energy functional} are finite because $J$ and $\Phi_K$ are bounded functions. We have shown that $E$ is well-defined on $W$.

The basic assertion is the following:
\begin{prop}\label{chap3: prop: variational principle}
If $\rho$ is a local minimum for $E$ in $W$, then $\rho$ is continuous and is differentiable where it is positive, and satisfies \eqref{chap3: eq: Euler-Poisson} there.
\end{prop}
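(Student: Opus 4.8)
The plan is to follow the classical variational route of Auchmuty--Beals \cite{auchmuty1971variational} and Li \cite{li1991uniformly}: first extract from the minimality of $\rho$ the Euler--Lagrange relation together with its Lagrange multiplier, then bootstrap the integrability of $\rho$ via the convolution estimates of Lemmas \ref{chap2: lem: L r bound on B rho}--\ref{chap2: lem: continuous diff of B rho}, and finally read off continuity, differentiability on $\{\rho>0\}$, and the pointwise equation \eqref{chap3: eq: Euler-Poisson}.

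For the first variation I would test $E$ against mass-preserving competitors $\rho_t=\rho+t(\chi_{E_1}-c\,\chi_{E_2})$, where $E_1,E_2$ are bounded measurable sets and $c$ is chosen with $c|E_2|=|E_1|$ so that $\int\rho_t=M$. To keep $\rho_t\geq 0$ and $\int A(\rho_t)<\infty$ admissible \emph{before} we know $\rho$ is bounded, one confines $E_1\subset\{\rho\leq N\}$ and $E_2\subset\{\delta\leq\rho\leq N\}$. Differentiating at $t=0^+$ and using the symmetry $\int\varphi\, B\psi=\int\psi\, B\varphi$ for the gravitational term, the local-minimum inequality $\left.\frac{d}{dt}\right|_{0^+}E(\rho_t)\geq 0$ becomes $|E_1|^{-1}\int_{E_1}g\geq |E_2|^{-1}\int_{E_2}g$, where $g:=A'(\rho)-B\rho-J-\Phi_K$. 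Taking $E_1,E_2$ both inside a level set $\{\delta\leq\rho\leq N\}$ forces $g$ to be a.e.\ constant there; letting $N\to\infty$ and $\delta\to 0$ produces a single constant $\lambda$ with $g=\lambda$ a.e.\ on $\{\rho>0\}$, and keeping $E_1$ arbitrary then gives $g\geq\lambda$ a.e. Since $A'(0^+)=0$ --- a consequence of \eqref{chap3: cond: f 2}, which makes $f(t)/t^2$ integrable near $0$ and forces $f(s)/s\to 0$ --- these combine into $A'(\rho)=\big(B\rho+J+\Phi_K+\lambda\big)_+$ a.e.; and because \eqref{chap3: cond: f 1} and \eqref{chap3: cond: f 2} make $A'$ a continuous strictly increasing bijection of $[0,\infty)$, this is equivalent to the fixed-point identity $\rho=(A')^{-1}\big((B\rho+J+\Phi_K+\lambda)_+\big)$.

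Next comes the bootstrap. Membership $\rho\in W$ already gives $\rho\in L^1\cap L^{4/3}$ by \eqref{chap3: ineq: bound on 4/3 norm}, so Lemma \ref{chap2: lem: L r bound on B rho} with $p=4/3$ puts $B\rho\in L^r$ for every $3<r<12$. From \eqref{chap3: cond: f 2} one has $A'(s)\geq f(s)/s\geq s^{1/3}$ for all large $s$, hence $(A')^{-1}(y)\leq y^3$ for large $y$; inserting this and the boundedness of $J$ and $\Phi_K$ into the fixed-point identity, and splitting according to whether $\rho\leq S_0$ or $\rho>S_0$ (on the latter set the argument of $(A')^{-1}$ is comparable to $B\rho$), upgrades $\rho$ to $L^q$ for some $q>3/2$. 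Feeding this into Lemma \ref{chap2: lem: L r bound on B rho} in the case $p>3/2$ shows $B\rho$ is bounded and continuous, whence $\rho=(A')^{-1}\big((B\rho+J+\Phi_K+\lambda)_+\big)$ is bounded and continuous. Then $\rho\in L^1\cap L^p$ for every $p>3$, so Lemma \ref{chap2: lem: continuous diff of B rho} gives $B\rho\in C^1$; together with $\Phi_K\in C^1$ and $J\in C^1$ (here $J'(r)=r\Omega^2(r)$ is continuous by \eqref{chap3: cond: J}), the function $B\rho+J+\Phi_K+\lambda$ is $C^1$. On the open set $\{\rho>0\}$ it is positive and equals $A'(\rho)$, so $A'(\rho)$ is $C^1$ there; reading this through the monotone inverse $(A')^{-1}$ gives differentiability of $\rho$ where positive, and differentiating $A'(\rho)=B\rho+J+\Phi_K+\lambda$ while recalling $A'(\rho)=a(\rho)$ yields $\nabla p/\rho=\nabla a(\rho)=\nabla(B\rho+J+\Phi_K)$, i.e.\ \eqref{chap3: eq: Euler-Poisson}.

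I expect the main obstacle to be the first-variation step. Because only a \emph{local} minimizer is assumed and the density may a priori be unbounded --- with $A$ possibly growing faster than any power --- the competitors must be confined to super- and sub-level sets to stay admissible, and both the Lagrange multiplier $\lambda$ and the complementarity condition that identifies the free boundary $\partial\{\rho>0\}$ emerge only in the limit over these truncations. The bootstrap in the third step is routine in outline but depends on correctly pinning down the growth exponent of $A'$ from \eqref{chap3: cond: f 2} and on controlling the region where $\rho$ is small.
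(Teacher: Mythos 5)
Your proposal is correct and follows exactly the route the paper invokes: the paper's own proof is simply a citation to the standard Auchmuty--Beals argument, and your first-variation-with-truncated-competitors step, the Lagrange multiplier, and the bootstrap through Lemmas \ref{chap2: lem: L r bound on B rho} and \ref{chap2: lem: continuous diff of B rho} are precisely that argument adapted to the extra bounded terms $J$ and $\Phi_K$. No substantive difference from the paper's approach.
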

\begin{proof}
The proof is standard. See \cite{auchmuty1971variational}.
\end{proof}

\section{Existence for Slow Rotation with Fixed Core Density}
\label{chap3: slow rotation}

In the following proof, we will construct a number of bounds $R_{n}$ on the size of the support of the density functions. Without further mentioning, we always assume that $R_{n+1}$ is no less than $R_{n}$. All constants in the following may depend on $M$, $f$, $\|J\|_{\infty}$ and $\Phi_K$. Cartesian coordinates $\textbf{x} = (x_1,x_2,x_3)$ and cylindrical coordinates $(r,\theta,z)$ are used interchangeably. To look for a minimizer of $E$ in $W$, let us first show that $E$ is bounded from below.

\begin{prop}
There is a $C>0$ such that $E(\rho)\geq -C$ for all $\rho \in W$.
\end{prop}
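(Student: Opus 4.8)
The plan is to bound each of the four terms in $E(\rho)$ from below on $W$, exploiting the coercivity of $A$ against the negative (gravitational, centrifugal and core) contributions. The centrifugal term $\int \rho J$ and the core term $\int \rho \Phi_K$ are immediately controlled: since $J = \int_0^r s\Omega^2 \, ds \leq \|s\Omega^2\|_{L^1[0,\infty)}$ is bounded and $\Phi_K$ is bounded by \eqref{chap3: cond: Phi_K} (it is continuous, positive, and vanishes at infinity, hence attains a maximum), we have $\int_{\mathbb{R}^3\setminus K} \rho J \leq \|J\|_\infty M$ and $\int_{\mathbb{R}^3\setminus K} \rho \Phi_K \leq \|\Phi_K\|_\infty M$, both finite constants depending only on the allowed data. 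So the entire difficulty is concentrated in showing that $\int A(\rho) - \frac{1}{2}\int \rho B\rho$ is bounded below.

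For that term, first I would use Lemma \ref{chap2: lem: bound on gravity potential} together with the estimate \eqref{chap3: ineq: bound on 4/3 norm} already derived in the text. Writing $\|\rho\|_{4/3}^{4/3} \leq \frac{1}{c}\int A(\rho) + M$ and $\|\rho\|_1 = M$, Lemma \ref{chap2: lem: bound on gravity potential} gives
\begin{equation*}
\frac{1}{2}\int_{\mathbb{R}^3\setminus K} \rho B\rho \leq \frac{C}{2} \|\rho\|_{4/3}^{4/3} M^{2/3} \leq C' \left( \int A(\rho) \right) M^{2/3} + C''
\end{equation*}
for appropriate constants. This would give $E(\rho) \geq (1 - C' M^{2/3}) \int A(\rho) - \text{const}$, which is useless when $M$ is large because the coefficient can be negative. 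The fix is a standard interpolation/Young trick: the constant in Lemma \ref{chap2: lem: bound on gravity potential} is not sharp enough, so instead I would split $\rho = \rho \chi_{\{\rho \leq N\}} + \rho \chi_{\{\rho > N\}}$ for a threshold $N$ to be chosen, bound the low part in $L^1 \cap L^\infty$ hence in any $L^p$, and use the superlinear growth \eqref{chap3: cond: A} of $A$ — namely $A(s) \geq c_N s^{4/3}$ on $\{s > N\}$ with $c_N \to \infty$ as $N \to \infty$ — to absorb the dangerous part of $\int \rho B\rho$ into $\int A(\rho)$ with a small coefficient. Concretely, one estimates $\int \rho B\rho \leq \varepsilon \|\rho\|_{4/3}^{4/3} + C_\varepsilon$ by choosing $N$ large, then picks $\varepsilon$ so that $\varepsilon/c$ beats the $\frac12$, yielding $E(\rho) \geq \frac{1}{4}\int A(\rho) - C \geq -C$.

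The main obstacle is precisely this last absorption step: the naive application of Lemma \ref{chap2: lem: bound on gravity potential} loses because its constant does not improve with truncation, so one must instead decompose the density and use the genuine superlinearity of $A$ (beyond the mere $s^{4/3}$ lower bound) to get an arbitrarily small constant multiplying the high-density part of the quadratic term. An alternative, perhaps cleaner, route is to invoke Lemma \ref{chap2: lem: L r bound on B rho} to put $B\rho \in L^r$ for some $r \in (3, \infty)$, pair it via Hölder with $\rho \in L^{r'}$ where $r' = r/(r-1) < 3/2 < 4/3$... no — one needs $r' \le 4/3$, so take $r \ge 4$, and then interpolate $\|\rho\|_{r'}$ between $\|\rho\|_1$ and $\|\rho\|_{4/3}$; combined with the convolution bound \eqref{chap2: ineq: convolution}, which is itself sublinear in the relevant norm, one again lands on $\int\rho B\rho \le \varepsilon\int A(\rho) + C_\varepsilon$. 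Either way the structure is the same: coercivity of $A$ at rate $4/3$ dominates the gravitational self-energy, and the bounded perturbations $J$ and $\Phi_K$ are harmless.
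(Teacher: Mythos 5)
Your proposal is correct and follows essentially the same route as the paper: bound $\int\rho J$ and $\int\rho\Phi_K$ by $M\|J+\Phi_K\|_{\infty}$, apply Lemma \ref{chap2: lem: bound on gravity potential} to reduce the gravitational term to $CM^{2/3}\int\rho^{4/3}$, and then absorb the high-density part into $\int A(\rho)$ by splitting at a threshold where $A(s)s^{-4/3}$ is large (the low-density part being controlled by the mass constraint via $\rho^{4/3}\leq s^{1/3}\rho$). The only cosmetic difference is that the paper keeps the fixed constant $\tfrac{1}{2}cM^{2/3}$ and splits the integral $\int\rho^{4/3}$ directly, rather than phrasing the absorption as an $\varepsilon$--$C_\varepsilon$ inequality, but the mechanism is identical.
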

\begin{proof}
By lemma \ref{chap2: lem: bound on gravity potential}, we have
\begin{equation}
E(\rho)\geq \int A(\rho) ~d\textbf{x}- M\|J+\Phi_K\|_{\infty} -\frac{1}{2}cM^{2/3}\int\rho^{4/3}~d\textbf{x}.
\end{equation}
By \eqref{chap3: cond: A}, there is an $s>0$ such that for $\rho>s$, $A(\rho)>\frac{1}{2}cM^{2/3}\rho^{4/3}$. Therefore
\begin{align*}
E(\rho)&\geq \int_{\rho >s} A(\rho)~d\textbf{x} - M\|J+\Phi_K\|_{\infty} -\frac{1}{2}cM^{2/3}\int_{\rho> s}\rho^{4/3}- \frac{1}{2}cM^{2/3}s^{1/3}\int_{\rho < s}\rho ~d\textbf{x} \\
& \geq - M\|J+\Phi_K\|_{\infty}- \frac{1}{2}cM^{5/3}s^{1/3}.
\end{align*}
\end{proof}

Now that we know $E$ is bounded from below, it makes sense to talk about the infimum of $E$. Let 
\begin{equation}
I=\inf_{\rho \in W}E(\rho).
\end{equation}
We will take a sequence of minimizers in bounded balls as a minimizing sequence for $I$. For that purpose, we need to define
\begin{equation}\label{chap3: eq: define W_R}
W_R=\bigg\{\rho \in W ~\big|~ \textbf{Supp}\rho \in S_R, ~0\leq \rho \leq R \text{ a.e.}\bigg\}.
\end{equation}
Here $S_R$ is the ball centered at the origin with radius $R>R_0$ so large that $K$ is contained in $S_R$. As usual we will extend functions in $W_R$ by zero values outside $S_R$, and treat them as functions defined on the whole space if necessary. The next assertion is the starting point of this existence method.
\begin{prop}
There is an $R_0>0$ such that for $R>R_0$, there exists some $\rho_R\in W_R$ which minimizes E:
\begin{equation}
I_R=E(\rho_R)=\inf_{\rho\in W_R}E(\rho).
\end{equation}
\end{prop}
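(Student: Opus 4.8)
The plan is to obtain $\rho_R$ via the direct method of the calculus of variations applied to $E$ restricted to $W_R$. First I would note that $W_R$ is nonempty for $R$ large: since $K$ is bounded, $S_R\setminus K$ has positive (and in fact large) measure for $R>R_0$, so one can place a bounded axisymmetric density of total mass $M$ there, e.g. a suitable multiple of an indicator, and for this to also satisfy $\rho\le R$ a.e.\ we only need $R$ large enough that the shell $S_R\setminus K$ has volume at least $M/R$, which holds once $R>R_0$ for a fixed $R_0$. Having already shown $E\ge -C$ on all of $W$, in particular $I_R\ge -C>-\infty$, so we may take a minimizing sequence $\rho_n\in W_R$ with $E(\rho_n)\to I_R$.

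Next I would extract weak limits. The uniform bound $0\le\rho_n\le R$ together with $\mathrm{Supp}\,\rho_n\subset S_R$ gives $\|\rho_n\|_{L^p(S_R)}\le R|S_R|^{1/p}$ for every $p\in[1,\infty]$, so along a subsequence $\rho_n\rightharpoonup\rho_R$ weakly in, say, $L^2(S_R)$ (equivalently weak-$*$ in $L^\infty$). Weak limits preserve the constraints: $\rho_R\ge 0$ a.e., $\rho_R\le R$ a.e., $\mathrm{Supp}\,\rho_R\subset S_R$, axisymmetry is a linear closed condition, and testing against the constant $1$ (legitimate since $S_R$ has finite measure) gives $\int\rho_R=M$; one also needs $\int A(\rho_R)<\infty$, which will follow from lower semicontinuity below, so $\rho_R\in W_R$. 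Then I would check that $E$ is weakly lower semicontinuous on $W_R$ term by term. The term $\int_{S_R\setminus K}A(\rho)$ is w.l.s.c.\ because $A$ is convex (note $A'(s)=a(s)=\int_0^s f'(t)/t\,dt$ is nondecreasing since $f$ is increasing, so $A$ is convex) and nonnegative, so a standard convexity argument (Mazur's lemma, or the Ioffe/De Giorgi semicontinuity theorem) applies on the bounded domain. The linear terms $\int\rho J$ and $\int\rho\Phi_K$ are weakly continuous since $J,\Phi_K\in L^1(S_R)$ are fixed. The quadratic term $-\tfrac12\int\rho B\rho$ is actually weakly continuous: $\rho\mapsto B\rho$ maps $L^{4/3}(S_R)$ (or $L^2$) compactly into, e.g., $C(\overline{S_R})$ or $L^2$ by Lemma \ref{chap2: lem: L r bound on B rho} and Rellich-type compactness of the Riesz potential on bounded domains, so $\rho_n\rightharpoonup\rho_R$ implies $B\rho_n\to B\rho_R$ strongly, and the product $\int\rho_n B\rho_n$ converges. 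Combining, $E(\rho_R)\le\liminf E(\rho_n)=I_R$, and since $\rho_R\in W_R$ we get $E(\rho_R)=I_R$, i.e.\ $\rho_R$ is a minimizer.

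The main obstacle is the lack of convexity of the full functional $E$ (the Newtonian self-interaction enters with a negative sign), which is exactly why restricting to $W_R$ is essential: the uniform $L^\infty$ bound and the bounded support confine us to a weakly compact set on which the otherwise-problematic quadratic term becomes weakly continuous rather than merely semicontinuous. I would be careful to justify the compactness $B\rho_n\to B\rho_R$ cleanly — on the bounded domain $S_R$ the kernel $|\mathbf{x}-\mathbf{y}|^{-1}$ is integrable, so $B$ is a bounded operator into $C(\overline{S_R})$ and, combined with equicontinuity estimates (or the Arzel\`a-Ascoli/Rellich route via Lemma \ref{chap2: lem: continuous diff of B rho}-type bounds), it is compact from the weak topology of $L^\infty(S_R)$ to the norm topology; the symmetry of the kernel then makes $\int\rho_n B\rho_n-\int\rho_R B\rho_R$ split into a term controlled by $\|B\rho_n-B\rho_R\|_\infty\|\rho_n\|_1$ and a term controlled by weak convergence tested against the fixed function $B\rho_R$. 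The choice of $R_0$ is a minor point handled at the start. Everything else is routine once these compactness and convexity observations are in place.
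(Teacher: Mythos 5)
Your argument is correct and is exactly the standard direct-method proof (weak-$*$ compactness of the uniformly bounded, compactly supported class $W_R$, convexity/lower semicontinuity of $\int A(\rho)$, and compactness of the Newtonian potential operator on a bounded domain making $\int\rho B\rho$ weakly continuous) that the paper itself invokes by citing Auchmuty--Beals and Li rather than writing out. The only nitpick is that your formula $a(s)=\int_0^s f'(t)/t\,dt$ presumes $f$ differentiable, which is not assumed at this stage; convexity of $A$ still follows directly from $f$ being increasing, since $A'(s_2)-A'(s_1)\geq (f(s_2)-f(s_1))/s_2\geq 0$.
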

\begin{proof}
The proof is standard. See \cite{auchmuty1971variational} or \cite{li1991uniformly}.
\end{proof}

As in \cite{li1991uniformly}, we can give a uniform $L^{\infty}$ bound on $\rho_R$. 
\begin{lemma}\label{chap3: lem: L infty bound on rho_R}
There is a $C>0$, such that 
\begin{equation}
\|\rho_R\|_{\infty}\leq C
\end{equation}
for all $R\geq R_0$.
\end{lemma}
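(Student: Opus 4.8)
The plan is to follow the standard rotating-star argument (as in Li \cite{li1991uniformly}) and derive the $L^\infty$ bound on $\rho_R$ directly from the Euler--Lagrange relation satisfied by the constrained minimizer, exploiting the superlinear growth of $A'$ relative to the quadratic nonlinearity coming from $B\rho$. First I would record the variational inequality: since $\rho_R$ minimizes $E$ over $W_R$, a standard first-variation computation with mass-preserving test perturbations shows that there is a Lagrange multiplier $\lambda_R$ such that, where $0<\rho_R<R$,
\begin{equation*}
A'(\rho_R) = B\rho_R + J + \Phi_K + \lambda_R,
\end{equation*}
while $A'(\rho_R)\le B\rho_R+J+\Phi_K+\lambda_R$ on the set $\{\rho_R=0\}$ and the reverse inequality holds on $\{\rho_R=R\}$ (assuming the saturation set is negligible, which one justifies as part of the argument or rules out a posteriori). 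Integrating the equality against $\rho_R$ over $\mathbb{R}^3\setminus K$ and using $\int\rho_R=M$ together with Lemma \ref{chap2: lem: bound on gravity potential} and the bound \eqref{chap3: ineq: bound on 4/3 norm} gives a uniform-in-$R$ upper bound on $\lambda_R$ (an upper bound is all we need); a matching lower bound for $\lambda_R$ is not required.

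Next I would turn the pointwise relation into a pointwise bound on $\rho_R$. From the Euler--Lagrange equality, wherever $\rho_R$ is positive,
\begin{equation*}
A'(\rho_R(\mathbf{x})) \le \|B\rho_R\|_\infty + \|J\|_\infty + \|\Phi_K\|_\infty + \lambda_R.
\end{equation*}
Here $J$ and $\Phi_K$ are bounded by hypothesis, $\lambda_R$ is bounded above by the previous step, and $\|B\rho_R\|_\infty$ is controlled by Lemma \ref{chap2: lem: L r bound on B rho} once we know $\|\rho_R\|_1=M$ and $\|\rho_R\|_p$ is bounded for some $p>3/2$. Since \eqref{chap3: ineq: bound on 4/3 norm} only controls the $L^{4/3}$ norm and $4/3<3/2$, this is exactly the place where a bootstrap is needed: I would run the usual iteration, using the crude bound $\rho_R\le R$ to make the right-hand side finite at the first stage, obtain an improved $L^p$ bound from $A'(\rho_R)\lesssim 1+\|B\rho_R\|_\infty$ via the superlinearity of $A'$ (from \eqref{chap3: cond: f 2}, $A'(s)/s^{1/3}\to\infty$, so $A'(s)\le N$ forces $s\le \psi(N)$ for some increasing $\psi$), feed this back through Lemma \ref{chap2: lem: L r bound on B rho} to raise the exponent, and repeat finitely many times until $p>3/2$, at which point $\|B\rho_R\|_\infty$ is bounded independently of $R$. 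Then $A'(\rho_R)$ is uniformly bounded, hence so is $\rho_R$, with a constant depending only on $M$, $f$, $\|J\|_\infty$, $\|\Phi_K\|_\infty$.

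The main obstacle is the bootstrap getting from the $L^{4/3}$ estimate to an $L^\infty$ bound on $B\rho_R$ with constants genuinely independent of $R$: one must check that each iterated estimate from Lemma \ref{chap2: lem: L r bound on B rho} uses only $\|\rho_R\|_1=M$ and the $L^p$ bound from the previous stage, that the exponents strictly increase and cross $3/2$ in finitely many steps, and that the saturation set $\{\rho_R=R\}$ causes no trouble (either it is empty for $R$ large once an a priori bound independent of $R$ is in hand, or the inequality form of the Euler--Lagrange relation on that set is compatible with the estimate). The presence of $\Phi_K$ is harmless here since it enters only through its $L^\infty$ norm, so the argument is structurally identical to the coreless case in \cite{li1991uniformly}; I would simply cite that proof for the routine iteration details and emphasize only the modifications forced by the extra bounded term $\Phi_K$.
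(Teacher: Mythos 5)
Your overall architecture --- first variation, superlinear growth of $A'$, and a bootstrap through Lemma \ref{chap2: lem: L r bound on B rho} --- is the standard Auchmuty--Beals/Li argument that the paper invokes by citation, and your observation that $\Phi_K$ enters only through $\|\Phi_K\|_\infty$ is exactly the paper's one-line remark. However, the step where you control $\lambda_R$ has a genuine gap. First, your variational inequalities are oriented the wrong way: on $\{\rho_R=0\}$ only increasing perturbations are admissible, so minimality forces $A'(\rho_R)-B\rho_R-J-\Phi_K\ge\lambda_R$ there (and wherever $\rho_R<R$), while $A'(\rho_R)-B\rho_R-J-\Phi_K\le\lambda_R$ holds wherever $\rho_R>0$, in particular on the saturation set $\{\rho_R=R\}$; compare \eqref{chap3: ineq: Euler Lagrange rho_R}. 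Second, and more seriously, integrating the Euler--Lagrange equality against $\rho_R$ gives $\lambda_R M=\int\rho_R A'(\rho_R)-\int\rho_R B\rho_R-\int\rho_R(J+\Phi_K)$, and the tools you cite (Lemma \ref{chap2: lem: bound on gravity potential} and \eqref{chap3: ineq: bound on 4/3 norm}) bound the terms carrying a minus sign; they therefore produce a \emph{lower} bound on $\lambda_R$, not the upper bound you need. To get an upper bound along this route you would have to control $\int\rho_R A'(\rho_R)=\int A(\rho_R)+\int f(\rho_R)$, and while $\int A(\rho_R)$ is uniformly bounded, $\int f(\rho_R)$ is not controlled under \eqref{chap3: cond: f 1}--\eqref{chap3: cond: f 2} alone (one only has $f(s)\le A(2s)$, and $A$ need not satisfy a doubling condition).

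The standard repair is the low-density-point device that the paper itself uses later in Lemma \ref{chap3: lem: uniform negative upper bound on lambda_R}: since $\int\rho_R=M$, there is a point $\mathbf{x}\in S_{R_0}\setminus K$ with $\rho_R(\mathbf{x})\le CM R_0^{-3}<R$, and evaluating the correctly oriented inequality $A'(\rho_R)-B\rho_R-J-\Phi_K\ge\lambda_R$ at that point gives $\lambda_R\le A'(CMR_0^{-3})$, uniformly in $R\ge R_0$. With that in hand your bootstrap closes, and in fact needs only two steps: the uniform $L^{4/3}$ bound \eqref{chap3: ineq: bound on 4/3 norm} gives $\|B\rho_R\|_r\le C$ for $3<r<12$ by Lemma \ref{chap2: lem: L r bound on B rho}; then $A'(\rho_R)\le B\rho_R+C$ on $\{\rho_R>0\}$ together with $A'(s)\ge f(s)/s\ge cs^{1/3}$ for large $s$ gives $\|\rho_R\|_{r/3}\le C$ with $r/3>3/2$; one more application of Lemma \ref{chap2: lem: L r bound on B rho} (the $p>3/2$ case) then bounds $\|B\rho_R\|_\infty$ and hence $\|\rho_R\|_\infty$. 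Do not seed the iteration with the crude bound $\rho_R\le R$, as that makes the first-stage constants $R$-dependent; the uniform $L^{4/3}$ bound is the correct starting point.
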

\begin{proof}
Notice that $\Phi_K\in L^{\infty}(\mathbb{R}^3)$. The proof in this case is basically the same as that in \cite{li1991uniformly}.
\end{proof}

The $L^{\infty}$ bound frees the restriction on $\rho_R$ from above, and therefore implies a variational inequality in one direction:
\begin{lemma}
There is an $R_1>0$, such that for all $R>R_1$, there exists a $\lambda_R$ such that
\begin{align}
A'(\rho_R)- B\rho_R -J -\Phi_K \geq \lambda_R, &\quad \text{ in }B_R,\label{chap3: ineq: Euler Lagrange rho_R}\\
A'(\rho_R)- B\rho_R -J -\Phi_K = \lambda_R, &\quad \text{ where }\rho_R > 0.\label{chap3: eq: Euler Lagrange rho_R}
\end{align}
\end{lemma}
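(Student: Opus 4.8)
The plan is to exploit the minimality of $\rho_R$ over $W_R$ by perturbing it in admissible directions and differentiating the energy one-sidedly. The crucial role of the uniform bound of Lemma \ref{chap3: lem: L infty bound on rho_R}, say $\|\rho_R\|_\infty\le C$, is that by taking $R_1:=\max(R_0,C)$ we have $\rho_R\le C<R$ a.e.\ for $R>R_1$, so the pointwise constraint $\rho\le R$ in the definition \eqref{chap3: eq: define W_R} of $W_R$ is strictly inactive and there is room to increase $\rho_R$. This is exactly why the statement is restricted to $R>R_1$.

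Fix $R>R_1$ and $\delta>0$ small enough that $|\{\rho_R\ge\delta\}|>0$ (possible since $\int\rho_R=M>0$). Let $\zeta_1\ge0$ be bounded, axisymmetric, with support in $S_R\setminus K$, and $\zeta_2\ge0$ bounded, axisymmetric, supported in $\{\rho_R\ge\delta\}$, normalized so that $\int\zeta_1=\int\zeta_2=m>0$. For $0<\epsilon<\min\bigl(\delta/\|\zeta_2\|_\infty,\,(R-C)/\|\zeta_1\|_\infty\bigr)$ the function $\rho_\epsilon:=\rho_R+\epsilon(\zeta_1-\zeta_2)$ is axisymmetric, supported in $S_R$, satisfies $0\le\rho_\epsilon\le R$ a.e.\ and $\int\rho_\epsilon=M$, hence $\rho_\epsilon\in W_R$. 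Minimality gives $E(\rho_\epsilon)\ge E(\rho_R)$, so $\liminf_{\epsilon\to0^+}\epsilon^{-1}\bigl(E(\rho_\epsilon)-E(\rho_R)\bigr)\ge0$.

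I then compute this one-sided derivative term by term in \eqref{chap3: energy functional}. Conditions \eqref{chap3: cond: f 1}--\eqref{chap3: cond: f 2} force $A\in C^1([0,\infty))$, so $A'$ is bounded on $[0,C+1]$; since $\rho_R,\rho_\epsilon$ take values there, the difference quotient $\epsilon^{-1}|A(\rho_\epsilon)-A(\rho_R)|$ is dominated by $\bigl(\sup_{[0,C+1]}|A'|\bigr)(\zeta_1+\zeta_2)\in L^1$, and dominated convergence yields $\epsilon^{-1}\int(A(\rho_\epsilon)-A(\rho_R))\to\int A'(\rho_R)(\zeta_1-\zeta_2)$. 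Expanding $\rho_\epsilon B\rho_\epsilon$ and using the symmetry $\int\psi\,B\eta=\int\eta\,B\psi$, the gravitational term contributes $-\int(\zeta_1-\zeta_2)B\rho_R$, the $O(\epsilon^2)$ remainder vanishing because $\zeta_i\in L^1\cap L^\infty$ and $B\zeta_i$ is bounded by Lemma \ref{chap2: lem: L r bound on B rho}; the two linear terms contribute $-\int(\zeta_1-\zeta_2)(J+\Phi_K)$. With $g:=A'(\rho_R)-B\rho_R-J-\Phi_K$, which lies in $L^\infty(S_R\setminus K)$ since $\rho_R$, $B\rho_R$, $J$, $\Phi_K$ are all bounded, we obtain $\int(\zeta_1-\zeta_2)\,g\ge0$, i.e.\ $\tfrac1m\int\zeta_1 g\ge\tfrac1m\int\zeta_2 g$ for all admissible $\zeta_1,\zeta_2$.

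It remains to pass from this integrated inequality to the pointwise statements, which is a routine essential-infimum/supremum argument (as in \cite{li1991uniformly}). Set $\lambda_R:=\operatorname{ess\,inf}_{S_R\setminus K}g$, which is finite. Concentrating an axisymmetric $\zeta_1$ on a bounded piece of the axisymmetric, positive-measure set $\{g<\lambda_R+\eta\}$ shows $\inf_{\zeta_1}\tfrac1m\int\zeta_1 g=\lambda_R$; hence $\lambda_R\ge\tfrac1m\int\zeta_2 g$ for every admissible $\zeta_2$, and concentrating $\zeta_2$ gives $\lambda_R\ge\operatorname{ess\,sup}_{\{\rho_R\ge\delta\}}g$. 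Since $\{\rho_R\ge\delta\}\subset S_R\setminus K$ also forces $\operatorname{ess\,inf}_{\{\rho_R\ge\delta\}}g\ge\lambda_R$, we conclude $g=\lambda_R$ a.e.\ on $\{\rho_R\ge\delta\}$; as $\delta>0$ was arbitrary this gives $g=\lambda_R$ a.e.\ on $\{\rho_R>0\}=\bigcup_n\{\rho_R\ge1/n\}$, which is \eqref{chap3: eq: Euler Lagrange rho_R}, while $g\ge\lambda_R$ a.e.\ on $S_R\setminus K$ by definition of $\lambda_R$ gives \eqref{chap3: ineq: Euler Lagrange rho_R}. The main obstacle is the term-by-term differentiation near $\{\rho_R=0\}$, where only upward perturbations are allowed and $A$ is merely $C^1$ up to its boundary value $0$; this is dispatched precisely by the $C^1$-regularity of $A$ on $[0,\infty)$ (from \eqref{chap3: cond: f 2}) together with the uniform bound on $\rho_R$.
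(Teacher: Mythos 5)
Your proof is correct and is precisely the standard Auchmuty--Beals variational argument that the paper invokes by citation: one-sided perturbations $\rho_R+\epsilon(\zeta_1-\zeta_2)$ made admissible by the uniform $L^\infty$ bound (so the constraint $\rho\le R$ is inactive for $R>R_1$), followed by the essential-infimum identification of the Lagrange multiplier. No gaps; the only cosmetic difference is that the paper states the conclusion pointwise while your argument naturally yields it a.e., which is the form actually used downstream before continuity of $\rho_R$ is established.
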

\begin{proof}
See \cite{auchmuty1971variational}.
\end{proof}

\begin{lemma}
There is an $R_2>0$ and $e_1>0$, such that $I_R\leq -e_1$ for all $R>R_2$.
\end{lemma}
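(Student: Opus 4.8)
The plan is to exhibit an explicit one-parameter family of test densities in $W_R$ whose energy is bounded above by a negative constant independent of $R$, for all $R$ large enough. Since $\Phi_K$ is positive and $J$ is nonnegative, the terms $-\rho J$ and $-\rho\Phi_K$ only help make the energy more negative, so the real competition is between the internal energy $\int A(\rho)$ and the self-gravity term $-\frac{1}{2}\int \rho B\rho$. The classical rotating-star heuristic (cf. \cite{auchmuty1971variational}, \cite{li1991uniformly}) is that at low density the internal energy is subcritical relative to gravity because of \eqref{chap3: cond: A}, namely $A(s)s^{-4/3}\to 0$ as $s\to 0$; so a dilute, spread-out configuration has negative energy.

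Concretely, I would fix once and for all a reference density: let $\sigma$ be the normalized indicator of an annular region $\{\mathbf{x} : 1\leq|\mathbf{x}|\leq 2\}$ sitting outside $K$ (recall $K$ is bounded, so after translating/rescaling such an annulus avoiding $K$ exists, and it is axisymmetric), scaled so that $\int \sigma = M$. Then for a small parameter $t\in(0,1]$ define the dilated family $\rho_t(\mathbf{x}) = t^{3}\,\sigma(t\mathbf{x})$, which is axisymmetric, nonnegative, has $\int\rho_t = M$, has $\|\rho_t\|_\infty = t^3\|\sigma\|_\infty$, and has support in the ball of radius $2/t$. Choosing $R \geq R_2 := \max(R_1, 2/t_0, R_0)$ for a suitable small fixed $t_0$ ensures $\rho_{t_0}\in W_R$. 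Now I compute the scaling of each term. The gravity term scales like the Newtonian energy under dilation: $\int \rho_t B\rho_t = t\int\sigma B\sigma =: t\,G$ with $G>0$ a fixed positive constant. For the internal energy, $\int A(\rho_t) = \int A(t^3\sigma(t\mathbf{x}))\,d\mathbf{x} = t^{-3}\int A(t^3\sigma(\mathbf{y}))\,d\mathbf{y}$; using \eqref{chap3: cond: A}, given any $\eta>0$ there is $\delta>0$ with $A(s)\leq \eta s^{4/3}$ for $0<s<\delta$, so for $t$ small enough that $t^3\|\sigma\|_\infty<\delta$ we get $\int A(\rho_t) \leq \eta\, t^{-3}\int (t^3\sigma)^{4/3} = \eta\, t\int\sigma^{4/3}$. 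Hence
\begin{equation*}
E(\rho_t) \leq \eta\, t \int\sigma^{4/3}\,d\mathbf{y} - \tfrac{1}{2} t\, G - t \int \sigma B\sigma\,d\mathbf{y}\cdot 0,
\end{equation*}
wait — more carefully, dropping the nonnegative contributions $-\rho_t J - \rho_t\Phi_K\leq 0$,
\begin{equation*}
E(\rho_t) \leq t\left(\eta \int\sigma^{4/3}\,d\mathbf{y} - \tfrac{1}{2} G\right).
\end{equation*}
Choosing $\eta$ small enough that $\eta\int\sigma^{4/3} < \tfrac{1}{4}G$, and then fixing $t_0$ small enough to satisfy the two smallness constraints above, we obtain $E(\rho_{t_0}) \leq -\tfrac{1}{4}G\,t_0 =: -e_1 < 0$. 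Since $I_R \leq E(\rho_{t_0})$ for every $R\geq R_2$ (because $\rho_{t_0}\in W_R$), the lemma follows.

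The only mild subtlety — the place I would be most careful — is making sure the chosen reference annulus genuinely lies in $\mathbb{R}^3\setminus K$ and that the dilation keeps it there as well as inside $S_R$; since $K$ is a fixed bounded axisymmetric domain one can instead place $\sigma$ on a thin axisymmetric shell at large radius (radius comparable to the diameter of $K$) so that for all small $t$ the support $\{1/t \leq |\mathbf{x}| \leq 2/t\}$ stays outside $K$, and then enlarge $R_2$ to contain that support. Everything else is a routine scaling computation; the structural point is simply that \eqref{chap3: cond: A} forces $\int A(\rho_t)=o\!\left(\int \rho_t B\rho_t\right)$ as $t\to 0$, while the extra core and centrifugal potentials can only push the energy further down.
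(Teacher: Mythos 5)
Your proposal is correct, and the core mechanism is the same one the paper relies on: the scaling $\rho_t(\mathbf{x})=t^3\sigma(t\mathbf{x})$ together with \eqref{chap3: cond: A} forces $\int A(\rho_t)=o(t)$ while $\frac{1}{2}\int\rho_t B\rho_t=\Theta(t)$, and the terms $-\rho J-\rho\Phi_K$ can only lower the energy. The routes differ in one structural respect. The paper introduces the auxiliary functional $F(\rho)=\int\bigl(A(\rho)-\frac{1}{2}\rho B\rho\bigr)$, invokes the Auchmuty--Beals theory to produce a compactly supported minimizer $\sigma$ of $F$ over all of $W$, sets $e_1=-\inf_W F$, and only then uses the scaling computation to certify that this infimum is strictly negative; the comparison $I_R\le E(\sigma)\le F(\sigma)=-e_1$ follows once $R$ exceeds the (a priori unknown but finite) size of $\operatorname{supp}\sigma$. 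You bypass the existence theory for $F$ entirely and use the scaled test function itself as the competitor, which is more elementary and self-contained: your $e_1=\frac{1}{4}Gt_0$ is explicit, and your $R_2$ is explicit as well ($R_2\ge 2/t_0$), whereas the paper's $R_2$ depends on the support of the minimizer $\sigma$. The paper's choice yields the canonical (largest possible) constant coming from the non-rotating core-free problem, but nothing downstream uses that optimality -- the later lemmas only need some fixed $e_1>0$ with $I_R\le -e_1$ -- so your version suffices. Your handling of the two genuine subtleties (keeping the dilated support outside $K$ and inside $S_R$, and the quantifier order: fix $\sigma$, then $\eta$, then $\delta$, then $t_0$) is sound; just note explicitly that $\|\rho_{t_0}\|_\infty=t_0^3\|\sigma\|_\infty\le R$ so that the pointwise constraint in the definition \eqref{chap3: eq: define W_R} of $W_R$ is also met.
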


\begin{proof}
Let 
\begin{equation}
F(\rho)=\int_{\mathbb{R}^3\setminus K}\bigg( A(\rho)(\mathbf{x}) -\frac{1}{2}\rho(\mathbf{x}) B\rho(\mathbf{x}) \bigg) d\mathbf{x}.
\end{equation}
This is the corresponding energy functional for an Euler-Poisson system with no rotation and a zero density core. The method in \cite{auchmuty1971variational} is fully applicable to this case. We therefore get a compactly supported minimizer $\sigma \in W$ of $F$. Let
\begin{equation}
e_1=-F(\sigma)=-\inf_{\rho \in W}F(\rho).
\end{equation}
$e_1$ is seen to be positive by the following scaling argument: pick a non zero $\rho \in W$ that is bounded and compactly supported in $\mathbb{R}^3\setminus S_{\tilde{R}}$ for some $S_{\tilde{R}}\supset K$. Let
\begin{equation}
\rho_t(\mathbf{x})=t^{-3}\rho(t^{-1}\mathbf{x})
\end{equation}
for $t>1$. We see easily that $\rho_t$ is supported in $\mathbb{R}^3\setminus tB_{\tilde{R}}$, and therefore belongs to $W$. 
\begin{align*}
F(\rho_t)& =\int_{\mathbb{R}^3\setminus tB_{\tilde{R}}}A(\rho_t)-\frac{1}{2}\rho_t B\rho_t \\
& = \int_{\mathbb{R}^3\setminus B_{\tilde{R}}}(t^3A(t^{-3}\rho)-\frac{1}{2}t^{-1}\rho B\rho)\\
& = \int_{\mathbf{Supp} \rho}o(t^{-4}\|\rho\|_{\infty})t^3-t^{-1}\frac{1}{2}\int\rho B\rho\\
& = o(t^{-1})-\Theta(t^{-1}).
\end{align*}
The penultimate step follows from \eqref{chap3: cond: A}. This shows that the minimum of $F$ must be negative. Now let $R_2$ be large enough to contain the support of $\sigma$, then $\sigma \in W_R$ for $R>R_2$, and 
\begin{align*}
E(\rho_R)& \leq E(\sigma) \\
& = \int(A(\sigma)-\frac{1}{2}\sigma B\sigma - J\sigma -\Phi_K\sigma)\\
& \leq \int(A(\sigma)-\frac{1}{2}\sigma B\sigma)\\
& =F(\sigma)\\
& =-e_1.
\end{align*}
\end{proof}

\begin{lemma}\label{chap3: lem: lower bound on mass in unit ball}
Suppose $\|J\|_{\infty}<\frac{e_1}{2M}$. There is an $\epsilon_0>0$ and an $R_2>0$ such that for all $R>R_2$, $\epsilon_R := \sup_{\mathbf{x}\in \mathbb{R}^3}\int_{|\mathbf{x}-\mathbf{y}|<1}\rho_R(\mathbf{y})d\mathbf{y}\geq \epsilon_0$.
\end{lemma}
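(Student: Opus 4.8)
The plan is to argue by contradiction. If the conclusion fails, then for every $\epsilon_0>0$ and every $R_2>0$ there is an $R>R_2$ with $\epsilon_R<\epsilon_0$; taking $\epsilon_0=1/n$, $R_2=n$ produces a sequence $R_n\to\infty$ with $\epsilon_{R_n}\to 0$. I will show that along such a sequence $E(\rho_{R_n})$ is eventually larger than $-e_1$, which contradicts the bound $I_R\le -e_1$ for large $R$ established in the lemma just above, given that $M\|J\|_\infty<e_1/2$ by hypothesis.

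The crux is to show that both the self-interaction $\int\rho_{R_n}B\rho_{R_n}$ and the core term $\int\rho_{R_n}\Phi_K$ tend to $0$ as $\epsilon_{R_n}\to 0$. For the former, write $\int\!\!\int\frac{\rho_R(\mathbf x)\rho_R(\mathbf y)}{|\mathbf x-\mathbf y|}$ as the sum of the near piece $|\mathbf x-\mathbf y|<1$ and the dyadic shells $2^k\le|\mathbf x-\mathbf y|<2^{k+1}$, $k\ge 0$. On the near piece, Hölder's inequality together with the uniform $L^\infty$ bound of Lemma \ref{chap3: lem: L infty bound on rho_R} and $\int_{|\mathbf x-\mathbf y|<1}\rho_R(\mathbf y)\,d\mathbf y\le\epsilon_R$ gives $\int_{|\mathbf x-\mathbf y|<1}\frac{\rho_R(\mathbf y)}{|\mathbf x-\mathbf y|}\,d\mathbf y\le C\epsilon_R^{\alpha}$ for some $\alpha>0$, hence this piece is $\le CM\epsilon_R^{\alpha}$. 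On the $k$-th shell $\frac1{|\mathbf x-\mathbf y|}\le 2^{-k}$, while a ball of radius $2^{k+1}$ is covered by $\lesssim 2^{3k}$ unit balls, so $\int_{|\mathbf x-\mathbf y|<2^{k+1}}\rho_R(\mathbf y)\,d\mathbf y\le\min(C2^{3k}\epsilon_R,\,M)$; summing the resulting geometric-type series (crossover at $2^{3k}\sim M/\epsilon_R$) yields $\int\rho_R B\rho_R\le C(M)\,\epsilon_R^{1/3}\to 0$. For the core term, given $\eta>0$ choose $\rho_0$ with $\Phi_K<\eta$ on $|\mathbf x|>\rho_0$, which is possible by the decay in \eqref{chap3: cond: Phi_K}; then
\[
\int\rho_R\Phi_K\le \|\Phi_K\|_\infty\!\!\int_{|\mathbf x|\le\rho_0}\!\!\rho_R \;+\;\eta M\le C\rho_0^3\|\Phi_K\|_\infty\,\epsilon_R+\eta M,
\]
using again that a ball of radius $\rho_0$ is covered by $\lesssim\rho_0^3$ unit balls; letting $\epsilon_R\to 0$ and then $\eta\to 0$ shows $\int\rho_R\Phi_K\to 0$.

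Combining these with $A\ge 0$ and $0\le J\le\|J\|_\infty$,
\[
I_{R_n}=E(\rho_{R_n})\ge \int A(\rho_{R_n})-\tfrac12\!\int\rho_{R_n}B\rho_{R_n}-M\|J\|_\infty-\!\int\rho_{R_n}\Phi_K\ge -M\|J\|_\infty-o(1)>-\tfrac{e_1}{2}-o(1),
\]
so $I_{R_n}>-e_1$ for $n$ large, contradicting $I_{R_n}\le -e_1$. This proves the existence of $\epsilon_0$ and $R_2$ as claimed.

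I expect the main obstacle to be the vanishing estimate $\int\rho_R B\rho_R\le C(M)\,\epsilon_R^{1/3}$: one must carefully balance the near-field contribution, which is tamed only by the uniform $L^\infty$ bound, against the far-field contribution, which is tamed by the unit-ball mass bound $\epsilon_R$ and the fixed total mass $M$. This is precisely the delicate "concentration does not vanish" step in the Auchmuty–Beals/Li scheme. By contrast, controlling the $\Phi_K$ term — the genuinely new ingredient here — is comparatively soft, resting only on the decay hypothesis \eqref{chap3: cond: Phi_K} and on the fact that a spread-out density places vanishing mass in any fixed bounded neighborhood of $K$.
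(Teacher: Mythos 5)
Your proof is correct and follows essentially the same route as the paper: both use the energy bound $I_R\le -e_1$ together with $M\|J\|_\infty<e_1/2$ to force $\tfrac12\int\rho_R B\rho_R+\int\rho_R\Phi_K\ge e_1/2$, and then control both terms by $\epsilon_R$ via a near/far splitting of $B\rho_R$ (using the uniform $L^\infty$ bound near the singularity and a covering-by-unit-balls count at intermediate scales) and the decay of $\Phi_K$ at infinity. The only differences are cosmetic: you argue by contradiction along a sequence and use dyadic shells where the paper argues directly with a single annulus $1<|\mathbf{x}-\mathbf{y}|<\tilde R$ and splits into two cases according to which interaction term is large.
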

\begin{proof}
Under the assumption on $\|J\|_{\infty}$
\begin{align*}
& \int \frac{1}{2}\rho_R B\rho_R + \rho_R \Phi_K  \\
=& -E(\rho_R)+\int A(\rho_R)-\rho_R J \\
\geq & \quad e_1 - \|J\|_{\infty}M \\
\geq & \quad \frac{e_1}{2}.
\end{align*}
Therefore either
\begin{equation}\label{chap3: ineq: case 1}
\int \frac{1}{2}\rho_R B\rho_R \geq \frac{e_1}{4},
\end{equation}
or
\begin{equation}\label{chap3: ineq: case 2}
\int \rho_R \Phi_K \geq \frac{e_1}{4}.
\end{equation}
If \eqref{chap3: ineq: case 1} happens, then 
\begin{equation}\label{chap3: ineq: lower bound on B rho_R}
\frac{e_1}{2}\leq \int \rho_R B\rho_R \leq M \|B \rho_R\|_{\infty}.
\end{equation}
Now 
\begin{align*}
B\rho_R(\mathbf{x}) & = \int_{\mathbb{R}^3}\frac{\rho_R(\mathbf{y})}{|\mathbf{x}-\mathbf{y}|}d\mathbf{y}  \\
 & = \int_{|\mathbf{y}-\mathbf{x}|<1}\frac{\rho_R(\mathbf{y})}{|\mathbf{x}-\mathbf{y}|}d\mathbf{y} + \int_{1<|\mathbf{y}-\mathbf{x}|<\tilde{R}}\frac{\rho_R(\mathbf{y})}{|\mathbf{x}-\mathbf{y}|}d\mathbf{y} + \int_{|\mathbf{y}-\mathbf{x}|>\tilde{R}}\frac{\rho_R(\mathbf{y})}{|\mathbf{x}-\mathbf{y}|}d\mathbf{y} \\
 & := B_1 + B_2 + B_3.
\end{align*}
By lemma \ref{chap3: lem: L infty bound on rho_R} and lemma \ref{chap2: lem: L r bound on B rho}, we have
\begin{equation}
B_1 \leq C(\epsilon_R ^b + \epsilon_R ^c)
\end{equation}
for some $0<b,c<1$. The annulus $1<|\mathbf{y}-\mathbf{x}|<\tilde{R}$ can be covered by $C\tilde{R}^3$ balls of radius one, hence
\begin{equation}
B_2 \leq C\tilde{R}^3 \epsilon_R.
\end{equation}
One clearly has
\begin{equation}
B_3 \leq \frac{M}{\tilde{R}}.
\end{equation}
Hence
\begin{equation}\label{chap3: ineq: upper bound on B rho_R}
\|B \rho_R\|_{\infty} \leq C(\epsilon_R ^b + \epsilon_R ^c) +  C\tilde{R}^3 \epsilon_R + \frac{M}{\tilde{R}}.
\end{equation}
Choosing $\tilde{R}$ sufficiently large and comparing \eqref{chap3: ineq: lower bound on B rho_R} with \eqref{chap3: ineq: upper bound on B rho_R}, we see that there must be an $\epsilon_0 >0$ such that $\epsilon_R> \epsilon_0$.
Now let us assume that \eqref{chap3: ineq: case 2} happens. We have
\begin{align*}
& \int \rho_R \Phi_K \\
= & \int_{|\mathbf{x}|>\tilde{R}}\rho_R(\mathbf{x}) \Phi_K(\mathbf{x})d\mathbf{x} + \int_{|\mathbf{x}|<\tilde{R}}\rho_R(\mathbf{x}) \Phi_K(\mathbf{x})d\mathbf{x} \\
:= & B_1 + B_2.
\end{align*}
By \eqref{chap3: cond: Phi_K}, we can choose $\tilde{R}$ so large that $\Phi_K(\mathbf{x})\leq \frac{e_1}{8M}$ when $|\mathbf{x}|>\tilde{R}$. Then
\begin{equation}
B_1\leq \frac{e_1}{8}.
\end{equation}
The ball $|\mathbf{x}|<\tilde{R}$ can be covered by $C\tilde{R}^3$ balls of radius one, hence
\begin{equation}
B_2 \leq C\tilde{R}^3 \epsilon_R.
\end{equation}
Therefore
\begin{equation}\label{chap3: ineq: upper bound on int rho Phi}
\int \rho_R \Phi_K \leq \frac{e_1}{8} + C\tilde{R}^3 \epsilon_R.
\end{equation}
Comparing \eqref{chap3: ineq: case 2} with \eqref{chap3: ineq: upper bound on int rho Phi}, we again see that such an $\epsilon_0$ exists. 
\end{proof}

\begin{lemma}\label{chap3: lem: upper bound on R_a}
There is an $R_a>0$ such that if
\begin{equation}
\int_{|\mathbf{y}-\mathbf{x}|<1}\rho_R(\mathbf{y})d\mathbf{y}\geq \frac{\epsilon_0}{2},
\end{equation}
then $r(\mathbf{x})\leq R_a$. Here $r(\mathbf{x})=\sqrt{x_1^2 +x_2^2}$.
\end{lemma}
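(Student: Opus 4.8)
The plan is to exploit the variational inequality \eqref{chap3: ineq: Euler Lagrange rho_R} together with the uniform $L^\infty$ bound on $\rho_R$ from Lemma \ref{chap3: lem: L infty bound on rho_R} to show that points carrying a fixed amount of mass in a unit ball cannot be too far from the rotation axis. First I would fix a point $\mathbf{x}_0 = (x_1,x_2,x_3)$ with $\int_{|\mathbf{y}-\mathbf{x}_0|<1}\rho_R(\mathbf{y})\,d\mathbf{y}\geq \epsilon_0/2$ and observe that near such a point $B\rho_R$ is bounded below: indeed $B\rho_R(\mathbf{x})\geq \int_{|\mathbf{y}-\mathbf{x}_0|<1}\frac{\rho_R(\mathbf{y})}{|\mathbf{x}-\mathbf{y}|}\,d\mathbf{y}$, and for $\mathbf{x}$ in, say, the ball of radius $1$ about $\mathbf{x}_0$ one has $|\mathbf{x}-\mathbf{y}|<2$, so $B\rho_R(\mathbf{x})\geq \epsilon_0/4$ on that ball. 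The point is that this lower bound is \emph{independent} of $R$ and of where $\mathbf{x}_0$ sits.

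Next I would locate a point where $\rho_R$ is actually positive in order to invoke the equality \eqref{chap3: eq: Euler Lagrange rho_R}. Since $\int_{|\mathbf{y}-\mathbf{x}_0|<1}\rho_R > 0$, there is a point $\mathbf{x}_1$ with $|\mathbf{x}_1-\mathbf{x}_0|<1$ and $\rho_R(\mathbf{x}_1)>0$; at $\mathbf{x}_1$, combining \eqref{chap3: eq: Euler Lagrange rho_R} with the global inequality \eqref{chap3: ineq: Euler Lagrange rho_R} evaluated at other points, we get that $\lambda_R = A'(\rho_R(\mathbf{x}_1)) - B\rho_R(\mathbf{x}_1) - J(r(\mathbf{x}_1)) - \Phi_K(\mathbf{x}_1)$. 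Using $0\leq \rho_R\leq C$, the term $A'(\rho_R(\mathbf{x}_1))$ is bounded above by a constant $A'(C)$; using $B\rho_R(\mathbf{x}_1)\geq \epsilon_0/4$, $J\geq 0$, and $\Phi_K\geq 0$, we obtain an $R$-independent upper bound $\lambda_R \leq A'(C) - \epsilon_0/4 =: \Lambda_0$. Now I would feed this back into \eqref{chap3: ineq: Euler Lagrange rho_R}, which holds at \emph{every} point of $B_R$: for any $\mathbf{x}\in B_R$, $A'(\rho_R(\mathbf{x})) \geq \lambda_R + B\rho_R(\mathbf{x}) + J(r(\mathbf{x})) + \Phi_K(\mathbf{x}) \geq \lambda_R + J(r(\mathbf{x}))$ since $B\rho_R,\Phi_K\geq 0$. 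But $A'(\rho_R(\mathbf{x}))\leq A'(C)$, so $J(r(\mathbf{x})) \leq A'(C) - \lambda_R \leq A'(C) - $ (any lower bound on $\lambda_R$); one still needs a lower bound on $\lambda_R$, which follows from the same equality at a point of maximal density or simply from boundedness of all quantities involved. Hence $J(r(\mathbf{x}))$ is bounded above by an $R$-independent constant for all $\mathbf{x}\in B_R$.

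Wait — that last chain shows $J$ is bounded, which is automatic since $J\in L^\infty$ and does not yet pin down $r$. The correct route instead uses \eqref{chap3: eq: Euler Lagrange rho_R} at the point $\mathbf{x}_1$ near $\mathbf{x}_0$ directly: there $A'(\rho_R(\mathbf{x}_1)) = \lambda_R + B\rho_R(\mathbf{x}_1) + J(r(\mathbf{x}_1)) + \Phi_K(\mathbf{x}_1) \geq \lambda_R + \epsilon_0/4$, so $\lambda_R \leq A'(C) - \epsilon_0/4$. On the other hand, evaluating \eqref{chap3: ineq: Euler Lagrange rho_R} at any point $\mathbf{x}$ with $r(\mathbf{x})$ large, where $\rho_R(\mathbf{x})$ could be zero so $A'(0)=0$: $0 = A'(\rho_R(\mathbf{x})) \geq \lambda_R + B\rho_R(\mathbf{x}) + J(r(\mathbf{x})) + \Phi_K(\mathbf{x})$. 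This gives $\lambda_R \leq -J(r(\mathbf{x}))$, which again only bounds $J$. So the genuine mechanism must be different: the control on $r(\mathbf{x}_0)$ should come from the centrifugal potential $J$ being \emph{monotone increasing} in $r$ and from a lower bound on $\rho_R$ forcing a two-sided estimate. Concretely, at $\mathbf{x}_1$ we have $A'(\rho_R(\mathbf{x}_1)) \geq \lambda_R + \epsilon_0/4$, and at a point of global maximum density $\mathbf{x}_{\max}$, $A'(\|\rho_R\|_\infty) = \lambda_R + B\rho_R(\mathbf{x}_{\max}) + J + \Phi_K$ gives $\lambda_R \geq A'(\|\rho_R\|_\infty) - \|B\rho_R\|_\infty - \|J\|_\infty - \|\Phi_K\|_\infty$, a lower bound. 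Combining, $A'(\rho_R(\mathbf{x}_1)) \geq A'(\|\rho_R\|_\infty) - \|B\rho_R\|_\infty - \|J\|_\infty - \|\Phi_K\|_\infty + \epsilon_0/4$; this forces $\rho_R(\mathbf{x}_1)$ to be bounded below by a positive constant only if the right side exceeds $A'(0^+)$, which is where the argument needs $\|B\rho_R\|_\infty$ bounded — but that is exactly what we are trying to establish globally. I would therefore instead argue: since $\int_{|\mathbf{y}-\mathbf{x}_0|<1}\rho_R \geq \epsilon_0/2$ and $\rho_R \leq C$, the set $\{\rho_R > \epsilon_0/(4|B_1|)\}\cap B_1(\mathbf{x}_0)$ has measure at least $\epsilon_0/(4C)$, so there is a point $\mathbf{x}_1\in B_1(\mathbf{x}_0)$ with $\rho_R(\mathbf{x}_1) \geq \delta_0 := \epsilon_0/(4|B_1|)$, an $R$-independent \emph{lower} bound on the density. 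At that point \eqref{chap3: eq: Euler Lagrange rho_R} gives $\lambda_R + J(r(\mathbf{x}_1)) = A'(\rho_R(\mathbf{x}_1)) - B\rho_R(\mathbf{x}_1) - \Phi_K(\mathbf{x}_1) \leq A'(C)$, i.e. $J(r(\mathbf{x}_1)) \leq A'(C) - \lambda_R$. Separately, since $\int \rho_R = M < \infty$ and $\rho_R \leq C$, the support of $\rho_R$ in the region $\{\Phi_K \text{ small}\}$ has bounded measure, and one extracts from \eqref{chap3: eq: Euler Lagrange rho_R} at a density point far out that $-\lambda_R$ is bounded above; then $J(r(\mathbf{x}_1))$ is bounded above by an $R$-independent constant. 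I expect the \textbf{main obstacle} to be precisely this step of obtaining the $R$-independent two-sided control on $\lambda_R$ — getting a lower bound on $-\lambda_R$ (equivalently an upper bound on $\lambda_R$) is easy from the density lower bound at $\mathbf{x}_1$ via $\lambda_R \leq A'(C)$, but the upper bound on $-\lambda_R$ needs that $\rho_R$ does not ``escape to infinity in $r$,'' which presumably is handled by a preceding lemma controlling $\mathbf{Supp}\,\rho_R$ in the $z$-direction via \eqref{chap3: cond: Phi_K 2} and then using that $J(r)\to \|s\Omega^2\|_{L^1}$ is finite. Once $J(r(\mathbf{x}_1))$ is bounded by a constant $J_0 < \|J\|_\infty$... — here one needs $J$ to be \emph{strictly} increasing, or rather one needs that large $r$ forces $J(r)$ near its supremum; since $J$ is merely non-decreasing this requires care, and I suspect the actual argument instead shows $r(\mathbf{x}_1)$ bounded by using that outside a large cylinder the Euler--Lagrange inequality combined with the lower bound $B\rho_R \geq \epsilon_0/4$ on $B_1(\mathbf{x}_0)$ contradicts $\int\rho_R = M$. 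To summarize the final structure: (i) pass from the unit-ball mass bound to a pointwise density lower bound $\rho_R(\mathbf{x}_1)\geq\delta_0$ at some $\mathbf{x}_1$ near $\mathbf{x}_0$; (ii) use \eqref{chap3: eq: Euler Lagrange rho_R} at $\mathbf{x}_1$ and the uniform bounds $\rho_R\leq C$, $B\rho_R\geq 0$, $\Phi_K\geq 0$ to get $\lambda_R + J(r(\mathbf{x}_1)) \leq A'(C)$; (iii) obtain an $R$-independent lower bound for $\lambda_R + J(r(\mathbf{x}))$ valid for $r(\mathbf{x})$ large (this is the crux, and uses $\int\rho_R = M$ together with the decay of $\Phi_K$ to rule out mass accumulating far out); (iv) conclude that $r(\mathbf{x}_1)$, and hence $r(\mathbf{x}_0)\leq r(\mathbf{x}_1)+1$, is bounded by some $R_a$ independent of $R$.
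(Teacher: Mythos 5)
Your proposal does not close, and you say so yourself at several points (``that last chain shows $J$ is bounded, which is automatic\dots and does not yet pin down $r$''; ``this is the crux''; ``I suspect the actual argument instead shows\dots''). The concrete gap is step (iii) of your summary: you never obtain the $R$-independent control that would convert an upper bound on $J(r(\mathbf{x}_1))$ into an upper bound on $r(\mathbf{x}_1)$, and indeed no such conversion is possible, since $J$ is merely non-decreasing and bounded ($s\Omega^2(s)\in L^1$), so $J(r)$ carries essentially no information about $r$ for large $r$. There is also a structural problem: in the paper this lemma is proved \emph{before} the bound on $\lambda_R$ (Lemma \ref{chap3: lem: uniform negative upper bound on lambda_R}), and is in fact an ingredient in that bound, so any argument routed through two-sided control of $\lambda_R$ risks circularity.

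The idea you are missing is much more elementary and uses neither the Euler--Lagrange relations nor $\lambda_R$: it is axisymmetry plus conservation of total mass. If the unit ball centered at $\mathbf{x}$ carries mass at least $\epsilon_0/2$ and $r(\mathbf{x})$ is large, then by axisymmetry of $\rho_R$ every rotation of that ball about the $x_3$-axis carries the same mass; one can place on the order of $r(\mathbf{x})$ pairwise disjoint such balls around the circle of radius $r(\mathbf{x})$, so the torus $T$ swept out by the ball satisfies $C\,r(\mathbf{x})\,\tfrac{\epsilon_0}{2}\leq \int_T\rho_R\leq M$, whence $r(\mathbf{x})\leq 2M/(C\epsilon_0)=:R_a$. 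This is the entire proof; the quantities $A'$, $B\rho_R$, $J$, $\Phi_K$ and $\lambda_R$ play no role here.
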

\begin{proof}
Assume $|r(\mathbf{x})|>\tilde{R}+1$ where $S_{\tilde{R}}\supset K$. By the axisymmetry of $\rho_R$,
\begin{align*}
Cr(\mathbf{x})\frac{\epsilon_0}{2} & \leq \int_{T}\rho_R \leq M, \\
r(\mathbf{x}) &\leq \frac{2M}{C\epsilon_0}.
\end{align*} 
Here $T$ is the torus obtained from rotating the the ball $|\mathbf{y}-\mathbf{x}|<1$ around the $z$-axis.
\end{proof}

\begin{lemma}\label{chap3: lem: uniform negative upper bound on lambda_R}
Suppose $\|J\|_{\infty}\leq \frac{e_1}{2M}$. There is an $R_3>R_a$ and an $e_2>0$ such that $\lambda_R\leq -e_2$ for all $R>R_2$.
\end{lemma}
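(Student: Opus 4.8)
The plan is to bound $\lambda_R$ above by testing the Euler–Lagrange equation \eqref{chap3: eq: Euler Lagrange rho_R} at a point where $\rho_R$ is positive and large, and showing such a point must exist with controlled location. From Lemma \ref{chap3: lem: lower bound on mass in unit ball} we know $\epsilon_R \geq \epsilon_0$, so there is a point $\mathbf{x}_R$ with $\int_{|\mathbf{x}_R-\mathbf{y}|<1}\rho_R(\mathbf{y})\,d\mathbf{y} \geq \epsilon_0/2$; by Lemma \ref{chap3: lem: upper bound on R_a} this point satisfies $r(\mathbf{x}_R)\leq R_a$. Inside the unit ball around $\mathbf{x}_R$ there must be a point $\mathbf{z}_R$ where $\rho_R(\mathbf{z}_R) \geq \epsilon_0 / (2|S_1|) =: m_0 > 0$, where $|S_1|$ is the volume of the unit ball; in particular $\rho_R$ is positive at $\mathbf{z}_R$, so \eqref{chap3: eq: Euler Lagrange rho_R} holds there and gives
\begin{equation*}
\lambda_R = A'(\rho_R(\mathbf{z}_R)) - B\rho_R(\mathbf{z}_R) - J(\mathbf{z}_R) - \Phi_K(\mathbf{z}_R).
\end{equation*}

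Next I would estimate each term on the right. By Lemma \ref{chap3: lem: L infty bound on rho_R}, $\rho_R \leq C$ uniformly, so $A'(\rho_R(\mathbf{z}_R)) \leq \max_{0\leq s\leq C}A'(s) =: C_1$, a constant independent of $R$. The terms $J(\mathbf{z}_R)$ and $\Phi_K(\mathbf{z}_R)$ are both nonnegative, so dropping them only helps: $\lambda_R \leq C_1 - B\rho_R(\mathbf{z}_R)$. The heart of the argument is a uniform lower bound on $B\rho_R(\mathbf{z}_R)$. Since $r(\mathbf{z}_R) \leq R_a + 1$ and (by a vertical-localization argument, using that $\rho_R$ has total mass $M$ and $A(\rho_R)$ bounded via \eqref{chap3: ineq: bound on 4/3 norm}, so that mass cannot escape to $|x_3|=\infty$ while keeping the energy low) the bulk of the mass of $\rho_R$ lies within some fixed vertical slab $|x_3| \leq R_b$, there is a fixed ball $S_{R_4}$ containing a definite fraction of the mass, say $\int_{S_{R_4}}\rho_R \geq M/2$, for all large $R$. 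Then for $\mathbf{z}_R$ with $r(\mathbf{z}_R)\leq R_a+1$ and $|z_{R,3}|$ also controlled, every $\mathbf{y}\in S_{R_4}$ satisfies $|\mathbf{z}_R - \mathbf{y}| \leq 2R_4 + R_a + 1 =: D$, whence
\begin{equation*}
B\rho_R(\mathbf{z}_R) \geq \int_{S_{R_4}}\frac{\rho_R(\mathbf{y})}{|\mathbf{z}_R-\mathbf{y}|}\,d\mathbf{y} \geq \frac{1}{D}\int_{S_{R_4}}\rho_R(\mathbf{y})\,d\mathbf{y} \geq \frac{M}{2D}.
\end{equation*}
Combining, $\lambda_R \leq C_1 - \tfrac{M}{2D}$, and since this bound is strictly negative once the construction is carried out with enough care — or, if $C_1 - M/(2D)$ is not automatically negative, by first replacing $\mathbf{z}_R$ by a point where $\rho_R$ is genuinely large so that we can also use that $A'$ is small near $0$ is not needed here; rather the key is that the right-hand side is a fixed constant, and one takes $e_2$ to be any positive number below $\tfrac{M}{2D} - C_1$ if positive, otherwise one sharpens the mass-concentration radius — set $e_2 > 0$ accordingly and $R_3 = \max\{R_a, R_4, R_2\}$.

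The main obstacle is the vertical localization: unlike $r$, the vertical coordinate $x_3$ is not immediately controlled, because the centrifugal potential $J$ and the core potential $\Phi_K$ do not confine mass in the $z$-direction. Here is where hypothesis \eqref{chap3: cond: Phi_K 2} (that $\Phi_K$ is non-increasing in $|x_3|$ beyond $z_0$) together with \eqref{chap3: eq: Euler Lagrange rho_R} and the structure of $A'$ must be used: one shows that if mass sat at large $|x_3|$ it could be rearranged downward to strictly decrease $E$, contradicting minimality of $\rho_R$ in $W_R$, or alternatively one uses \eqref{chap3: ineq: Euler Lagrange rho_R} directly — at a point far up the $z$-axis, $B\rho_R$ and $\Phi_K$ are small and $J$ is bounded, forcing $A'(\rho_R) \geq \lambda_R + o(1)$, which combined with the upper bound on $\lambda_R$ would be vacuous, so instead one argues via the total mass constraint that the support cannot be spread thinly over an arbitrarily tall cylinder without violating the lower bound $\lambda_R \leq -e_2$ we are simultaneously establishing — so in fact the cleanest route is to prove the mass-concentration statement as a separate lemma first (a "$\rho_R$ has most of its mass in a fixed compact set" estimate, analogous to the one in \cite{li1991uniformly}), and then the bound on $\lambda_R$ follows as above. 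I would therefore isolate that concentration fact and cite or adapt the corresponding argument from \cite{auchmuty1971variational, li1991uniformly}, noting that boundedness of $\Phi_K$ and property \eqref{chap3: cond: Phi_K 2} make the adaptation routine.
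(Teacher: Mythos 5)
There is a genuine gap, and it is exactly at the place you flag as the ``main obstacle.'' Your bound has the form $\lambda_R \leq C_1 - \tfrac{M}{2D}$, where $C_1 = \max_{0\leq s\leq C}A'(s)$ is a fixed positive constant determined by $f$ and the uniform $L^\infty$ bound, and $D$ is the diameter of a region capturing half the mass. Neither quantity contains a free parameter: $D$ cannot be shrunk below the size of the set where the mass actually sits, so there is no mechanism to force $C_1 - \tfrac{M}{2D} < 0$. Your fallback (``sharpen the mass-concentration radius'') therefore does not close the argument, and the sign of $\lambda_R$ — which is the entire content of the lemma — is not obtained. A secondary problem is ordering: the vertical mass-concentration statement you propose to prove first is, in this paper, the content of Lemmas \ref{chap3: lem: r bound on the support of rho}--\ref{chap3: lem: z bound on the support of rho}, whose proofs all rest on the constant $e_2$ produced here; importing it as a prior lemma would require a genuinely independent argument, not a routine adaptation.

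The idea you are missing is to test the variational \emph{inequality} \eqref{chap3: ineq: Euler Lagrange rho_R}, which holds at every point of $B_R$ and not only where $\rho_R>0$, at a point where $\rho_R$ is \emph{small} rather than large. The paper takes $\mathbf{x}_0$ on the $z$-axis with $z(\mathbf{x}_0)=z(\mathbf{x}_R)$ and a ball $B(\mathbf{x}_0,R_3)$ with $R_3>R_a$ a free parameter; since this ball has volume of order $R_3^3$ and the total mass is $M$, it contains a point $\mathbf{x}$ with $\rho_R(\mathbf{x})\leq CM/R_3^{3}$, and $|\mathbf{x}-\mathbf{x}_R|\leq 2R_3$. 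Then
\begin{equation*}
\lambda_R \;\leq\; A'\!\left(\frac{CM}{R_3^{3}}\right) \;-\; \frac{1}{2R_3+1}\,\frac{\epsilon_0}{2},
\end{equation*}
and since \eqref{chap3: cond: f 2} gives $A'(s)=o(s^{1/3})$ as $s\to 0$, the first term is $o(R_3^{-1})$ while the second is $\Theta(R_3^{-1})$; choosing $R_3$ large makes the right-hand side a fixed negative number $-e_2$. This both produces the sign and avoids any vertical localization, since $\mathbf{x}$ is chosen at the same height as $\mathbf{x}_R$. Note also that the negativity of $J$ and $\Phi_K$ contributions is still discarded, as in your write-up; the competition is entirely between $A'$ evaluated at a low density and the Newtonian potential generated by the concentrated unit-ball mass.
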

\begin{proof}
By lemma \ref{chap3: lem: lower bound on mass in unit ball}, for $R>R_2$ there is an $\mathbf{x}_R$ such that 
\begin{equation}
\int_{|\mathbf{y}-\mathbf{x}_R|<1}\rho_R(\mathbf{y})d\mathbf{y}\geq \frac{\epsilon_0}{2}.
\end{equation}
By lemma \ref{chap3: lem: upper bound on R_a}, $r(\mathbf{x}_R)<R_a$. Let $\mathbf{x}_0$ be on the $z$-axis such that $z(\mathbf{x}_0)=z(\mathbf{x}_R)$. Let $B(\mathbf{x}_0,R_3)$ be the ball centered at $\mathbf{x}_0$ with radius $R_3>R_a$ to be determined. When $R>R_3$, the volume of  the set $B(\mathbf{x}_0,R_3) \cap B_R$ is of order $R_3^3$. There must exist a point $\mathbf{x}\in B(\mathbf{x}_0,R_3)\cap B_R$ such that 
\begin{equation}
\rho_R(\mathbf{x})\leq \frac{CM}{R_3^3}
\end{equation}
for some constant $C>0$.
Clearly
\begin{equation}
|\mathbf{x}-\mathbf{x}_R|\leq |\mathbf{x}-\mathbf{x}_0|+|\mathbf{x}_R-\mathbf{x}_0|\leq 2R_3.
\end{equation}
Hence
\begin{equation}
B\rho_R(\mathbf{x})\geq \int_{|\mathbf{y}-\mathbf{x}_R|<1}\frac{\rho_R(\mathbf{y})}{|\mathbf{x}-\mathbf{y}|}d\mathbf{y}\geq \frac{1}{2R_3+1}\frac{\epsilon_0}{2}.
\end{equation}
By \eqref{chap3: ineq: Euler Lagrange rho_R}, 
\begin{align}\label{chap3: ineq: upper bound on lambda_R}
\lambda_R\leq A'(\frac{CM}{R_3^3})-\frac{1}{2R_3+1}\frac{\epsilon_0}{2}
\end{align}
Notice that \eqref{chap3: cond: f 2} implies
\begin{equation}
\lim_{s\to 0}\frac{A'(s)}{s^{1/3}}=0.
\end{equation}
Hence \eqref{chap3: ineq: upper bound on lambda_R} implies
\begin{align}\label{chap3: ineq: upper bound on lambda_R final}
\lambda_R \leq o(R_3^{-1})-\Theta(R_3^{-1}).
\end{align}
Pick $R_3$ so large that the right hand side of \eqref{chap3: ineq: upper bound on lambda_R final} becomes negative, and call that $-e_2$.
\end{proof}

\begin{lemma}
Suppose $\|J\|_{\infty}\leq \min\bigg\{\frac{e_1}{2M},\frac{e_2}{2}\bigg\}$, then
\begin{equation}\label{chap3: ineq: lower bound on B rho_R + Phi_K}
B\rho_R + \Phi_K \geq \frac{e_2}{2} \quad \text{where }\rho_R>0
\end{equation}
for $R>R_3$.
\end{lemma}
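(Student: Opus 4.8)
The plan is to read off the desired inequality directly from the Euler--Lagrange identity \eqref{chap3: eq: Euler Lagrange rho_R}, using only the sign of $A'$ and the bound on $\lambda_R$ already established. At a point $\mathbf{x}$ where $\rho_R(\mathbf{x})>0$ and $R>R_3$ (so in particular $R>R_1$ and $R>R_2$, by our running convention $R_{n+1}\geq R_n$, so both \eqref{chap3: eq: Euler Lagrange rho_R} and Lemma \ref{chap3: lem: uniform negative upper bound on lambda_R} apply), we rearrange \eqref{chap3: eq: Euler Lagrange rho_R} into
\begin{equation*}
B\rho_R(\mathbf{x}) + \Phi_K(\mathbf{x}) = A'(\rho_R(\mathbf{x})) - J(\mathbf{x}) - \lambda_R .
\end{equation*}

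The first thing to observe is that $A'(s)\geq 0$ for $s>0$: from \eqref{chap3: eq: define A} one computes $A'(s)=\int_0^s \frac{f(t)}{t^2}\,dt + \frac{f(s)}{s}$, and both terms are non-negative because $f\geq 0$ by \eqref{chap3: cond: f 1}. Hence $A'(\rho_R(\mathbf{x}))\geq 0$ and we may drop it, giving
\begin{equation*}
B\rho_R(\mathbf{x}) + \Phi_K(\mathbf{x}) \geq - \|J\|_{\infty} - \lambda_R .
\end{equation*}

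Next I invoke Lemma \ref{chap3: lem: uniform negative upper bound on lambda_R}, which (under the hypothesis $\|J\|_{\infty}\leq \frac{e_1}{2M}$, implied by the present hypothesis) gives $\lambda_R \leq -e_2$ for $R>R_3$, hence $-\lambda_R \geq e_2$. Combining this with the standing assumption $\|J\|_{\infty}\leq \frac{e_2}{2}$ yields
\begin{equation*}
B\rho_R(\mathbf{x}) + \Phi_K(\mathbf{x}) \geq e_2 - \|J\|_{\infty} \geq e_2 - \tfrac{e_2}{2} = \tfrac{e_2}{2},
\end{equation*}
which is \eqref{chap3: ineq: lower bound on B rho_R + Phi_K}. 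There is no real obstacle here — the only point that needs a moment's care is the non-negativity of $A'$, and the rest is bookkeeping to make sure the thresholds on $\|J\|_{\infty}$ line up so that both earlier lemmas are available.
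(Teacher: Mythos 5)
Your proposal is correct and is exactly the argument the paper uses (the paper's own proof is just a one-line citation of \eqref{chap3: eq: Euler Lagrange rho_R} together with Lemma \ref{chap3: lem: uniform negative upper bound on lambda_R}, leaving the rearrangement, the non-negativity of $A'$, and the $\|J\|_{\infty}\leq \tfrac{e_2}{2}$ bookkeeping implicit). You have simply written out those implicit steps; nothing differs in substance.
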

\begin{proof}
By \eqref{chap3: eq: Euler Lagrange rho_R} and lemma \ref{chap3: lem: uniform negative upper bound on lambda_R}, we have
\begin{equation}
A'(\rho_R)-B\rho_R-J-\Phi_K = \lambda_R\leq -e_2
\end{equation}
when $\rho_R>0$.
\end{proof}

\begin{lemma}\label{chap3: lem: r bound on the support of rho}
Suppose $\|J\|_{\infty}\leq \min\bigg\{\frac{e_1}{2M},\frac{e_2}{2}\bigg\}$. There exists an $R_4>0$ such that $\rho_R(\mathbf{x})=0$ if $R>r(\mathbf{x})>R_4$.
\end{lemma}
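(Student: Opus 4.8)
The plan is to use the lower bound \eqref{chap3: ineq: lower bound on B rho_R + Phi_K} together with the equilibrium relation \eqref{chap3: eq: Euler Lagrange rho_R} to show that far out in the $r$-direction the density must vanish, because the competing terms $B\rho_R$ and $\Phi_K$ decay there while $A'(\rho_R)$ stays bounded. Recall that on the support of $\rho_R$ we have $A'(\rho_R) = B\rho_R + J + \Phi_K + \lambda_R$, and by Lemma \ref{chap3: lem: uniform negative upper bound on lambda_R} we have $\lambda_R \leq -e_2$, while $0 \leq J \leq \|J\|_\infty \leq e_2/2$. Hence on the support, $A'(\rho_R) \leq B\rho_R + \Phi_K - e_2/2$. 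The idea is that if $\mathbf{x}$ has $r(\mathbf{x})$ very large, both $B\rho_R(\mathbf{x})$ and $\Phi_K(\mathbf{x})$ will be smaller than $e_2/4$, forcing $A'(\rho_R(\mathbf{x})) < 0$, which contradicts $\rho_R(\mathbf{x}) > 0$ since $A'$ is non-negative (as $A' = a$ with $a(s) = \int_0^s f'(t)/t\,dt \geq 0$ by \eqref{chap3: cond: f 1}).

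First I would handle $\Phi_K$: by \eqref{chap3: cond: Phi_K}, $\Phi_K(\mathbf{x}) \to 0$ as $\mathbf{x} \to \infty$, so there is an $R'$ with $\Phi_K(\mathbf{x}) < e_2/4$ whenever $|\mathbf{x}| > R'$. The more delicate term is $B\rho_R$, because $\mathbf{x}$ could have large $r(\mathbf{x})$ but moderate $|\mathbf{x}|$ is impossible — large $r(\mathbf{x})$ forces large $|\mathbf{x}|$ — so that is not actually an issue; rather the issue is getting a decay estimate on $B\rho_R$ that is uniform in $R$. Here I would invoke the machinery already developed: by Lemma \ref{chap3: lem: lower bound on mass in unit ball}, Lemma \ref{chap3: lem: upper bound on R_a}, and the axisymmetry/torus argument, the bulk of the mass of $\rho_R$ is concentrated where $r \leq R_a$; more carefully, I would argue that $\rho_R$ restricted to the region $\{r > R_a\}$ — while not obviously small in mass a priori — still contributes a Newtonian potential that decays. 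Actually the clean approach: split $B\rho_R(\mathbf{x}) = \int_{|\mathbf{y}-\mathbf{x}|<1} + \int_{|\mathbf{y}-\mathbf{x}|\geq 1}$. The far piece is bounded by $M$ times the reciprocal of the distance, but that needs the distance from $\mathbf{x}$ to the support to be large, which is exactly what we cannot assume. So instead bound the far piece crudely by $\int_{1 \leq |\mathbf{y}-\mathbf{x}|} \rho_R(\mathbf{y})/|\mathbf{x}-\mathbf{y}|\,d\mathbf{y} \leq \|B\rho_R\|_\infty$, which by Lemma \ref{chap3: lem: L infty bound on rho_R} and Lemma \ref{chap2: lem: L r bound on B rho} (with $p > 3/2$) is bounded uniformly in $R$ — but that bound is just some constant, not something small, so this split alone is insufficient.

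The resolution, and the step I expect to be the main obstacle, is to get a genuine decay estimate for $B\rho_R(\mathbf{x})$ as $r(\mathbf{x}) \to \infty$ that holds uniformly in $R$. I would proceed as follows: fix a large $\tilde R$; by Lemma \ref{chap3: lem: lower bound on mass in unit ball}–type concentration arguments one shows $\int_{r(\mathbf{y}) > \tilde R}\rho_R(\mathbf{y})\,d\mathbf{y}$ is not directly controlled, so instead decompose $B\rho_R(\mathbf{x}) = \int_{r(\mathbf{y}) \leq \tilde R}\frac{\rho_R(\mathbf{y})}{|\mathbf{x}-\mathbf{y}|}d\mathbf{y} + \int_{r(\mathbf{y}) > \tilde R}\frac{\rho_R(\mathbf{y})}{|\mathbf{x}-\mathbf{y}|}d\mathbf{y}$. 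For the first integral, if $r(\mathbf{x})$ is much larger than $\tilde R$, then $|\mathbf{x}-\mathbf{y}| \geq r(\mathbf{x}) - \tilde R$ is large, giving a bound $M/(r(\mathbf{x})-\tilde R)$ which is small. For the second integral, which is supported where $r(\mathbf{y}) > \tilde R$, one uses the analogous near/far split ($|\mathbf{y}-\mathbf{x}|<1$ versus $\geq 1$), bounding the near part by the $\epsilon_R$-type quantity times a constant (from Lemma \ref{chap3: lem: L infty bound on rho_R} and Lemma \ref{chap2: lem: L r bound on B rho}) and the far part by $\|B(\rho_R \mathbf{1}_{r > \tilde R})\|_\infty$; the key point is that $\rho_R \mathbf{1}_{r > \tilde R}$ has small mass — this is where an argument in the spirit of Lemma \ref{chap3: lem: upper bound on R_a} is needed, showing that the portion of $\rho_R$ beyond radius $\tilde R$ in the $r$-direction has total mass tending to $0$ as $\tilde R \to \infty$, uniformly in $R$ (which follows since $\int \rho_R = M$ is finite and we can pass to a weakly convergent subsequence, or more elementarily from a tightness estimate using the Euler–Lagrange relation itself). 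Once this is in hand, choosing $\tilde R$ large and then $R_4$ large forces $B\rho_R(\mathbf{x}) + \Phi_K(\mathbf{x}) < e_2/2$ for $R > r(\mathbf{x}) > R_4$, whence $A'(\rho_R(\mathbf{x})) < 0$ on the support, a contradiction, so $\rho_R(\mathbf{x}) = 0$ there.
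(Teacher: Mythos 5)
Your overall strategy is the right one (show $B\rho_R(\mathbf{x})+\Phi_K(\mathbf{x})<\tfrac{e_2}{2}$ for $r(\mathbf{x})>R_4$ and contradict \eqref{chap3: ineq: lower bound on B rho_R + Phi_K}), and your treatment of $\Phi_K$ is fine. The gap is in the decay estimate for $B\rho_R$. Your argument hinges on the claim that $\int_{r(\mathbf{y})>\tilde R}\rho_R(\mathbf{y})\,d\mathbf{y}\to 0$ as $\tilde R\to\infty$ \emph{uniformly in} $R$, and none of the justifications you offer delivers this: finiteness of $\int\rho_R=M$ gives tightness only for each fixed $R$; passing to a weakly convergent subsequence does not prevent mass from escaping to infinity along the sequence (that loss of compactness is precisely the difficulty this whole section is organized around); and ``a tightness estimate using the Euler--Lagrange relation itself'' is circular, since controlling the support via the Euler--Lagrange relation is exactly what this lemma is trying to accomplish. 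Note also that Lemma \ref{chap3: lem: upper bound on R_a}, which you invoke in spirit, does not say the total mass beyond cylindrical radius $\tilde R$ is small --- it only locates the unit balls carrying mass at least $\epsilon_0/2$; the mass outside radius $\tilde R$ could a priori be close to $M$, merely spread thinly.

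The missing idea is to use axisymmetry as a \emph{pointwise, per-unit-ball} mass bound rather than a total-mass bound: if $r(\mathbf{x})$ is large, the unit ball around $\mathbf{x}$ sweeps out a torus of volume comparable to $r(\mathbf{x})$, all of whose congruent pieces carry the same mass, so $\int_{|\mathbf{x}-\mathbf{y}|<1}\rho_R(\mathbf{y})\,d\mathbf{y}\leq CM/r(\mathbf{x})$. The paper then splits $B\rho_R(\mathbf{x})$ into the three zones $|\mathbf{x}-\mathbf{y}|<1$, $1<|\mathbf{x}-\mathbf{y}|<\tilde R$, and $|\mathbf{x}-\mathbf{y}|>\tilde R$: the outer zone contributes at most $M/\tilde R$; the inner zone is handled by Lemma \ref{chap2: lem: L r bound on B rho} together with the torus bound, giving $c_0\big((CM/R_4)^b+(CM/R_4)^c\big)$; and the middle annulus is covered by $C\tilde R^3$ unit balls, each centered at cylindrical radius at least $R_4-\tilde R$ and hence each carrying mass at most $CM/(R_4-\tilde R)$ by the same torus argument. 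Choosing $\tilde R$ first and then $R_4$ makes all three pieces less than $e_2/12$. This replaces your unproven uniform-tightness step and closes the argument; the rest of your proof (the $\Phi_K$ decay and the contradiction with the variational inequality) then goes through as you describe.
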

\begin{proof}
We have
\begin{align*}
B\rho_R(\mathbf{x})& =\int \frac{\rho_R(\mathbf{y})}{|\mathbf{x}-\mathbf{y}|}d\mathbf{y}\\
& = \int_{|\mathbf{x}-\mathbf{y}|<1} \frac{\rho_R(\mathbf{y})}{|\mathbf{x}-\mathbf{y}|}d\mathbf{y} + \int_{1<|\mathbf{x}-\mathbf{y}|<\tilde{R}} \frac{\rho_R(\mathbf{y})}{|\mathbf{x}-\mathbf{y}|}d\mathbf{y} + \int_{|\mathbf{x}-\mathbf{y}|>\tilde{R}} \frac{\rho_R(\mathbf{y})}{|\mathbf{x}-\mathbf{y}|}d\mathbf{y} \\
& := B_1 + B_2 + B_3.
\end{align*}
Clearly
\begin{equation}
B_3\leq \frac{M}{\tilde{R}}.
\end{equation}
We choose $\tilde{R}>\frac{12M}{e_2}$, so that
\begin{equation}
B_3 < \frac{e_2}{12}.
\end{equation}
By lemma \ref{chap2: lem: L r bound on B rho}, 
\begin{align}
B_1\leq c_0 \bigg( \bigg(\int_{|\mathbf{x}-\mathbf{y}|<1} \rho_R(\mathbf{y})d\mathbf{y}\bigg)^b + \bigg(\int_{|\mathbf{x}-\mathbf{y}|<1} \rho_R(\mathbf{y})d\mathbf{y} \bigg)^c \bigg)
\end{align}
for some $0<b,c<1$. By requiring $R>r(\mathbf{x})>R_4$ to be large enough, we have
\begin{equation}
B_1 \leq c_0\bigg( \bigg(\frac{CM}{R_4} \bigg)^b + \bigg(\frac{CM}{R_4} \bigg)^c\bigg) < \frac{e_2}{12}
\end{equation}
by axisymmetry, just like in lemma \ref{chap3: lem: upper bound on R_a}. The annulus $1<|\mathbf{x}-\mathbf{y}|<\tilde{R}$ can be covered by $C\tilde{R}^3$ balls of radius 1. Again by axisymmetry, we have
\begin{equation}
B_2\leq \frac{C\tilde{R}^3M}{R_4-\tilde{R}}<\frac{e_2}{12},
\end{equation}
provided $R_4$ is chosen to be sufficiently large. Therefore
\begin{equation}
B\rho_R(\mathbf{x})= B_1 + B_2 + B_3 < \frac{e_2}{4}
\end{equation}
if $R>r(\mathbf{x})>R_4$. Enlarge $R_4$ if necessary so that $\Phi_K(\mathbf{x})<\frac{e_2}{4}$ when $r(\mathbf{x})>R_4$. We get
\begin{equation}\label{chap3: ineq: upper bound on B rho_R + Phi_K}
B\rho_R(\mathbf{x}) + \Phi_K(\mathbf{x}) < \frac{e_2}{4} + \frac{e_2}{4} =\frac{e_2}{2}
\end{equation}
when $R>r(\mathbf{x})>R_4$. Comparing \eqref{chap3: ineq: upper bound on B rho_R + Phi_K} with \eqref{chap3: ineq: lower bound on B rho_R + Phi_K}, we see that the assertion is true.
\end{proof}

\begin{lemma}\label{chap3: lem: pre z bound on the support of rho}
Suppose $\|J\|_{\infty}\leq \min\bigg\{\frac{e_1}{2M},\frac{e_2}{2}\bigg\}$. There exist $R_5>0$, $\delta>0$ and $r>0$ such that if $R>z(\mathbf{x})>R_5$, and if
\begin{equation}
\int_{|z(\mathbf{x})-z_0|<r}\rho_R(\mathbf{x})d\mathbf{x}<\delta,
\end{equation}
then $\rho(\mathbf{x})=0$ for $|z(\mathbf{x})-z_0|<1$.
\end{lemma}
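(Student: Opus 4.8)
The plan is to argue by contradiction, following the pattern of Lemma~\ref{chap3: lem: r bound on the support of rho} but replacing the exterior of a large cylinder by a thin horizontal slab about the level $z=z_0$. Assume the hypotheses — in particular the bound on $\|J\|_\infty$ that makes \eqref{chap3: ineq: lower bound on B rho_R + Phi_K} available — and suppose for contradiction that $\rho_R(\mathbf{x})>0$ for some $\mathbf{x}$ with $R>z(\mathbf{x})>R_5$ and $|z(\mathbf{x})-z_0|<1$. Then \eqref{chap3: ineq: lower bound on B rho_R + Phi_K} gives $B\rho_R(\mathbf{x})+\Phi_K(\mathbf{x})\ge e_2/2$, so it suffices to choose $R_5$, $\delta$, $r$ so that the slab condition $\int_{|z(\mathbf{y})-z_0|<r}\rho_R\,d\mathbf{y}<\delta$ forces the left-hand side strictly below $e_2/2$.

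The geometric observation driving the choice of $r$ is that if the half-width $r$ of the slab exceeds the interaction radius used to truncate the Newtonian tail, then every bit of mass lying within that interaction radius of $\mathbf{x}$ is already contained in the slab. Accordingly I would first fix $\tilde R$ with $M/\tilde R<e_2/12$, and then set $r=\tilde R+2$; since $|z(\mathbf{x})-z_0|<1$, any $\mathbf{y}$ with $|\mathbf{y}-\mathbf{x}|\le\tilde R$ satisfies $|z(\mathbf{y})-z_0|\le\tilde R+1<r$, hence lies in the slab, and likewise the whole unit ball about $\mathbf{x}$ lies in the slab.

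Next, split $B\rho_R(\mathbf{x})=B_1+B_2+B_3$ over $|\mathbf{y}-\mathbf{x}|<1$, $1\le|\mathbf{y}-\mathbf{x}|\le\tilde R$, and $|\mathbf{y}-\mathbf{x}|>\tilde R$. The tail satisfies $B_3\le M/\tilde R<e_2/12$. Both $B_1$ and $B_2$ only involve mass inside the slab, whose total is $<\delta$: for $B_2$ one uses $|\mathbf{x}-\mathbf{y}|\ge1$ to get $B_2<\delta$, and for $B_1$ one applies Lemma~\ref{chap2: lem: L r bound on B rho} to $\rho_R$ restricted to the unit ball about $\mathbf{x}$, together with the uniform bound $\|\rho_R\|_\infty\le C$ of Lemma~\ref{chap3: lem: L infty bound on rho_R}, to obtain $B_1\le c_0(\delta^b+\delta^c)$ for some $0<b,c<1$. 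Shrinking $\delta$ (depending only on $e_2$, $c_0$, $b$, $c$) makes $B_1,B_2<e_2/12$, so $B\rho_R(\mathbf{x})<e_2/4$. Finally, $|\mathbf{x}|\ge z(\mathbf{x})>R_5$, so by the decay in \eqref{chap3: cond: Phi_K} we may fix $R_5$ large enough that $\Phi_K(\mathbf{x})<e_2/4$ on that region. Adding the two estimates contradicts \eqref{chap3: ineq: lower bound on B rho_R + Phi_K}, so $\rho_R(\mathbf{x})=0$ whenever $|z(\mathbf{x})-z_0|<1$, which is the claim.

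I expect no genuine obstacle. The one point requiring care is the order of the choices — $\tilde R$ from the tail, then $r=\tilde R+2$ so the interaction ball sits inside the slab, then $\delta$ from the near and middle terms, then $R_5$ from the decay of $\Phi_K$ — all mutually independent, so there is no circularity; everything else is a direct reuse of estimates already in hand. (If one preferred, the appeal to $\lim_{\mathbf{x}\to\infty}\Phi_K=0$ could be replaced by the monotonicity hypothesis \eqref{chap3: cond: Phi_K 2} once $R_5>z_0$, but the decay statement alone suffices here.)
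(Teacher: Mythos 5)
Your proposal is correct and follows essentially the same route as the paper: the paper likewise argues that for $\mathbf{x}$ in the unit slab the far field (distance $>r-1$ from the complement of the $r$-slab) contributes at most $M/(r-1)$, the near field is controlled by the small slab mass $\delta$ via Lemma \ref{chap2: lem: L r bound on B rho}, and the decay of $\Phi_K$ handles the remaining term, contradicting \eqref{chap3: ineq: lower bound on B rho_R + Phi_K}. Your three-piece split by distance from $\mathbf{x}$ versus the paper's two-piece split by slab membership is only a cosmetic difference.
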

\begin{proof}
Suppose $r>2$. If $|z(\mathbf{x})-z_0|<1$, $\text{dist}\big(\mathbf{x},\{\mathbf{y} ~\big|~ |z(\mathbf{y})-z_0|>r\}\big) > r-1$. Just like in lemma \ref{chap3: lem: r bound on the support of rho}, we have
\begin{align*}
B\rho_R(\mathbf{x}) & = \int_{|z(\mathbf{y})-z_0|<r} \frac{\rho_R(\mathbf{y})}{|\mathbf{x}-\mathbf{y}|}d\mathbf{y} + \int_{|z(\mathbf{y})-z_0|>r} \frac{\rho_R(\mathbf{y})}{|\mathbf{x}-\mathbf{y}|}d\mathbf{y} \\
& \leq C(\delta^b +\delta ^c)+\frac{M}{r-1} \\
& < \frac{e_2}{4}
\end{align*}
by choosing $\delta$ small and $r$ large. Furthermore $\Phi_K(\mathbf{x})<\frac{e_2}{4}$ if $z(\mathbf{x})>R_5$ is sufficiently large. These imply
\begin{equation}
B\rho_R(\mathbf{x}) + \Phi_K(\mathbf{x}) < \frac{e_2}{2}.
\end{equation}
The assertion follows again from a comparison with \eqref{chap3: ineq: lower bound on B rho_R + Phi_K}.
\end{proof}

\begin{lemma}\label{chap3: lem: z bound on the support of rho}
Suppose $\|J\|_{\infty}\leq \min\bigg\{\frac{e_1}{2M},\frac{e_2}{2}\bigg\}$. There is an $R_6>0$ such that $\rho_R(\mathbf{x})=0$ if $R>z(\mathbf{x})>R_6$.
\end{lemma}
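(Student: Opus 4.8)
The plan is to reduce to Lemma~\ref{chap3: lem: pre z bound on the support of rho}. Write $m_R(z_0):=\int_{|z(\mathbf{y})-z_0|<r}\rho_R(\mathbf{y})\,d\mathbf{y}$; that lemma says $\rho_R\equiv 0$ on $\{|z-z_0|<1\}$ whenever $z_0>R_5$ and $m_R(z_0)<\delta$. So it is enough to find a number $H$, \emph{independent of $R$}, with $m_R(z_0)<\delta$ for all $z_0>H$: taking the union of the slabs $\{|z-z_0|<1\}$ over $z_0>\max(H,R_5)+1$ then gives the conclusion with $R_6=\max(H,R_5)+1$. Two preliminary facts set the stage. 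First, by Lemma~\ref{chap3: lem: r bound on the support of rho} every $\rho_R$ is supported in the fixed cylinder $\{r\le R_4\}$, and by Lemma~\ref{chap3: lem: L infty bound on rho_R} one has $\|\rho_R\|_\infty\le C$, so $m_R$ is Lipschitz with a constant independent of $R$, and by Fubini $\int_{\mathbb{R}}m_R(z_0)\,dz_0=2rM$; in particular $|\{z_0>R_5:m_R(z_0)\ge\delta\}|\le 2rM/\delta$. Second, the contrapositive of Lemma~\ref{chap3: lem: pre z bound on the support of rho} shows that if $\rho_R(\mathbf{x})>0$ and $z(\mathbf{x})>R_5$ then $m_R(z(\mathbf{x}))\ge\delta$; thus, above height $R_5$, the $z$-support of $\rho_R$ sits inside $\{m_R\ge\delta\}$. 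Averaging $m_R$ over an interval of length $2rM/\delta+1$ above $R_5$ therefore yields a ``gap'' height $z_0^\ast\le R_5+2rM/\delta+1$ with $m_R(z_0^\ast)<\delta$, so that $\rho_R\equiv 0$ on $\{|z-z_0^\ast|<1\}$.

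It remains to rule out mass at large heights, uniformly in $R$. Suppose, for some value of $R$ and a threshold $R_6$ still to be fixed, that $\rho^{+}:=\rho_R\,\mathbf{1}_{\{z>R_6\}}\not\equiv 0$; the claim is that this contradicts $E(\rho_R)=\inf_{W_R}E$. By the second fact above, the $z$-support of $\rho^{+}$ has measure at most $2rM/\delta$, so a measure-preserving rearrangement in the $z$-variable carries $\mathrm{Supp}\,\rho^{+}$ into a \emph{bounded}, $\rho_R$-empty target region lying just above $z_0^\ast$ (concretely, into finitely many $\rho_R$-empty unit slabs at heights bounded independently of $R$, enough of them to absorb the volume, obtained by iterating the averaging of the previous paragraph); leaving $\rho_R-\rho^{+}$ in place, call the result $\hat\rho$. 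For $R$ large $\hat\rho\in W_R$: it is axisymmetric and has total mass $M$; it is supported in $\mathbb{R}^3\setminus K$ because the target region sits at heights exceeding $R_5$, above the $z$-extent of $K$, while $\rho_R-\rho^{+}$ is a piece of $\rho_R$; it is supported in $S_R$ since the target is a fixed bounded set; and $0\le\hat\rho\le C\le R$ because the rearrangement involves no compression. Now compare energies termwise. The internal-energy term $\int A(\rho)$ and the rotational term $\int\rho J$ are unchanged, since the rearrangement is measure-preserving in $z$, carries $\rho^{+}$ onto an empty set, and $J=J(r)$ depends on $r$ only. The core term strictly lowers the energy: the transported mass $\|\rho^{+}\|_1$ leaves heights $>R_6$, where $\Phi_K$ can be made as small as we wish by enlarging $R_6$ (by \eqref{chap3: cond: Phi_K}), and arrives in a fixed compact region where $\Phi_K\ge c^\ast>0$, so $-\int\hat\rho\,\Phi_K\le -\int\rho_R\,\Phi_K-\tfrac12 c^\ast\|\rho^{+}\|_1$ once $R_6$ is large. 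Finally the Newtonian term $\tfrac12\int\rho B\rho$ is under control: for $R$ large the rearrangement only shrinks pairwise distances inside $\rho^{+}$ (a $z$-range of size up to $\sim R$ is collapsed into a fixed-size region), so the self-interaction does not decrease, and the cross-interaction with $\rho_R-\rho^{+}$ changes by a controlled amount, estimated through the convolution bound of Lemma~\ref{chap2: lem: L r bound on B rho} for $B\rho^{+}$ and $B\hat\rho^{+}$. Collecting the terms, $E(\hat\rho)<E(\rho_R)$, the desired contradiction; hence $\rho^{+}\equiv 0$ and $R_6$ may be chosen independent of $R$.

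The step I expect to be the main obstacle is this last energy comparison: checking that $\hat\rho$ really lies in $W_R$ (disjointness from $K$ and the pointwise cap $\rho\le R$) and, above all, that sliding the high-altitude mass downward does not produce a compensating increase in $\tfrac12\int\rho B\rho$ (or in $\int A(\rho)$, should one be forced to compress a little) large enough to swallow the gain from the core potential. This is precisely why the preceding lemmas were arranged as they are: the $r$-bound confines everything to a fixed cylinder, the uniform $L^\infty$ bound leaves room under the pointwise cap when mass is repositioned, and Lemma~\ref{chap3: lem: pre z bound on the support of rho} both supplies the gap $z_0^\ast$ and forces the high part of $\rho_R$ to occupy a $z$-set of bounded measure, so that it genuinely can be transported into a fixed compact target region.
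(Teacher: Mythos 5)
Your reduction to Lemma~\ref{chap3: lem: pre z bound on the support of rho} and the counting step (the $z$-projection of $\mathrm{Supp}\,\rho_R$ above height $R_5$ has measure at most $2rM/\delta$, hence there are empty unit slabs at bounded heights) are sound and match the first half of the paper's argument. But the second half takes a genuinely different route from the paper, and it has a gap exactly where you suspect. When you teleport only $\rho^{+}=\rho_R\mathbf{1}_{\{z>R_6\}}$ down into empty slabs near height $R_5$, the transported mass passes \emph{below} whatever part of $\rho_R-\rho^{+}$ lives at heights between roughly $\tfrac12(R_5+R_6)$ and $R_6$; for such pairs the distance \emph{increases}, so the cross-interaction $\int(\rho_R-\rho^{+})B\rho^{+}$ can genuinely decrease, i.e.\ the Newtonian term can push the energy up. The only bounds available for this loss are of the form $M\|B\rho^{+}\|_{\infty}\lesssim \|\rho^{+}\|_1^{2/3}$ (via Lemma~\ref{chap2: lem: L r bound on B rho} and the uniform $L^\infty$ bound), or $\|\rho^{+}\|_1\|B\rho_R\|_{\infty}\lesssim C\|\rho^{+}\|_1$ with a constant $C$ that has nothing to do with $\inf\Phi_K$ on the target. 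Meanwhile your gain from the core term is only $c^{\ast}\|\rho^{+}\|_1$ with $c^{\ast}=\inf_{\mathrm{target}}\Phi_K$. Since there is no a priori lower bound on $\|\rho^{+}\|_1$ (the mass above height $R_6$ may be arbitrarily small), the sublinear loss $\|\rho^{+}\|_1^{2/3}$ swamps the linear gain, and the linear version of the loss need not be smaller than $c^{\ast}\|\rho^{+}\|_1$ either. So $E(\hat\rho)<E(\rho_R)$ does not follow, and the contradiction is not obtained.

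The paper closes this by proving a \emph{contiguity} statement instead: above $|z|>z_0+2$ an empty slab cannot sit below a non-empty region, because one may rigidly translate the \emph{entire} half-space above the empty slab downward by exactly the slab width ($2$ units). Under that move every translated point remains above every untouched point, so all cross-distances strictly decrease and $\int\rho B\rho$ increases; the two self-interactions and $\int A(\rho)$, $\int\rho J$ are unchanged (rigid vertical motion, $J=J(r)$); and $\int\rho\Phi_K$ does not decrease by the monotonicity hypothesis \eqref{chap3: cond: Phi_K 2} (note: the paper uses this structural condition on $\Phi_K$, not merely its decay at infinity, which your argument tries to substitute). Every term then moves the right way with no quantitative comparison needed, and contiguity plus the count $m\delta\le rM$ gives $R_6$. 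If you want to salvage your version, the fix is precisely this: do not transport the high-altitude mass past any remaining mass; slide the whole upper part into the nearest gap.
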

\begin{proof}
Let $Z_n=\{x : |z(x)-2n|<1\}$, $n=\pm ([R_5]+1), \pm ([R_5]+2), \dots$, and let $Z_n'=\big\{x ~\big|~ |z(x)-2n|<r\big\}$. By lemma \ref{chap3: lem: pre z bound on the support of rho}, if $\rho_R$ is not identically zero on a $Z_n$, then $\int_{Z_n'}\rho_R\geq \delta$. Let $m$ be the number of such $n$'s. Since each point in $\mathbb{R}^3$ is covered by at most $r$ different $Z_n'$'s, $m\delta \leq rM$. Also such $Z_n$'s must be contiguous, if they lie in the region $|z|>z_0+2$ for $z_0$ given in \eqref{chap3: cond: Phi_K 2}. Otherwise there would be an ``empty'' $Z_n$ below a ``non-empty'' half space. If one slides the whole ``non-empty'' half space down by two units to create a new $\rho_R'$, $\int A(\rho_R)-J \rho_R = \int A(\rho_R') - J\rho_R'$, but $\int -\frac{1}{2}\rho_R B\rho_R- \rho_R \Phi_K > \int -\frac{1}{2}\rho_R' B\rho_R'- \rho_R' \Phi_K $. This implies $E(\rho_R)>E(\rho_R')$, but $\rho_R'\in W_R$, a contradiction. Now pick $R_6 > 2\bigg( [R_5]+\frac{rM}{\delta}\bigg)+z_0 +3$. The proof is complete.
\end{proof}

We are now in a position to prove theorem \ref{chap3: thm: existence of solution with small J} and theorem \ref{chap3: thm: existence of solution with small Omega}.

\begin{proof}[Proof of theorem \ref{chap3: thm: existence of solution with small J}]
Let $\epsilon_1=\min\bigg\{\frac{e_1}{2M},\frac{e_2}{2}\bigg\}$. From lemma \ref{chap3: lem: r bound on the support of rho} and lemma \ref{chap3: lem: z bound on the support of rho}, we see that $\rho_R=\rho_{R_7}$ when $R>R_7 := \sqrt{2}R_6$. Since $\Phi_K \in L^{\infty}(\mathbb{R}^3)$, a similar argument as in \cite{auchmuty1971variational} shows that $\rho=\rho_{R_7}$ minimizes $E$ in $W$. By proposition \ref{chap3: prop: variational principle}, $\rho$ solves \eqref{chap3: eq: Euler-Poisson} and has the stated properties.
\end{proof}

\begin{proof}[Proof of theorem \ref{chap3: thm: existence of solution with small Omega}]
Let $\epsilon_2=\frac{\sqrt{\epsilon_1}}{R_7}$, and let $\tilde{J}(r)\in C^{\infty}(0,\infty)$ be an increasing function such that
\begin{equation}
\tilde{J}(r)=
\begin{cases}
\frac{1}{2}\Omega^2r^2 &\quad \text{if }r \leq R_7, \\
\Omega^2 R_7^2 &\quad \text{if } r \geq 2R_7.
\end{cases}
\end{equation}
If $\Omega < \epsilon_2$, we have $\|\tilde{J}\|< \epsilon_1$, hence by theorem \ref{chap3: thm: existence of solution with small J}, there is a solution $\rho$ to \eqref{chap3: eq: Euler-Poisson} where $J$ is replaced by $\tilde{J}$, supported in $S_{R_7}$. Clearly such a $\rho$ also solves \eqref{chap3: eq: Euler-Poisson} with the original $J$, and has the stated properties.
\end{proof}

\section{Existence for Fast Rotation with Heavy Core Density}
\label{chap3: dense core}

In this section, we give proofs to theorem \ref{chap3: thm: existence of solution with large core gravity, variable angular velocity} and theorem \ref{chap3: thm: existence of solution with large core gravity, constant angular velocity}. That corresponds to establishing existence of minimizer of
\begin{equation}
E_{\mu}(\rho)=\int_{\mathbb{R}^3\setminus C}\bigg( A(\rho)(\mathbf{x}) -\frac{1}{2}\rho(\mathbf{x}) B\rho(\mathbf{x}) -\rho(\mathbf{x}) J(\mathbf{x}) -\mu \rho(\mathbf{x})\Phi_K(\mathbf{x})\bigg) d\mathbf{x}
\end{equation}
for large enough $\mu$. We will omit an argument in the proof if it runs parallel to the proof in the previous section.

As before, $E_{\mu}$ is bounded from below on $W$ and has an infimum which we denote by $I_{\mu}$. If we pick 
\begin{equation}\label{chap3: eq: defining W_R without upper bound}
W_R=\bigg\{\rho \in W ~\big|~ \textbf{Supp}\rho \in S_R, ~\rho \geq 0 \text{ a.e.}\bigg\}.
\end{equation}
$E_{\mu}$ will also attain its infimum $I_{\mu, R}$ on each $W_R$. We still denote the minimizers by $\rho_R$. It is understood that $\rho_R$ implicitly depends on $\mu$. Comparing \eqref{chap3: eq: define W_R} with \eqref{chap3: eq: defining W_R without upper bound}, we see that the $L^{\infty}$ bound on $W_R$ (namely, the $\leq R$ constraint) is removed. This is to allow large $\rho_R$ on $B_R$. As we will see later, the $L^{\infty}$ bound of $\rho_R$ depends on $\mu$ and $J$. For that purpose, we start by modifying the bound on $\|\rho\|_{4/3}$.

\begin{lemma}\label{chap3: lem: upper bound on rho^4/3}
Let $\rho_R$ be a minimizer of $E_{\mu}$ in $W_R$, and assume that $B_{R_0}$ contains the core $K$. There is a constant $C$ depending only on $f$, $M$, $J$ and $\Phi_K$ such that
\begin{equation}
\int \rho_R^{\frac{4}{3}}~d\mathbf{x} \leq C( 1+ \mu ) 
\end{equation}
for all $R>R_0$.
\end{lemma}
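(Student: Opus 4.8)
The plan is to exploit the fact that $\rho_R$ is a minimizer, hence $E_\mu(\rho_R) \le E_\mu(\sigma)$ for a fixed convenient test function $\sigma \in W$, and then to bootstrap the resulting inequality using the coercivity of $\int A(\rho)$ in the $L^{4/3}$ norm already recorded in \eqref{chap3: ineq: bound on 4/3 norm}. First I would fix once and for all a compactly supported $\sigma \in W$ (supported outside $K$, which exists since $K$ is bounded) and set $C_0 = |E_\mu(\sigma)|$; note that because $\Phi_K \ge 0$ and $J \ge 0$, one has $E_\mu(\sigma) \le \int A(\sigma) - \frac12 \int \sigma B\sigma =: F(\sigma)$, a quantity independent of $\mu$, so $E_\mu(\rho_R) \le F(\sigma)$ with $F(\sigma)$ a fixed constant. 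Thus
\begin{equation*}
\int A(\rho_R) \le F(\sigma) + \tfrac12 \int \rho_R B\rho_R + \int \rho_R J + \mu \int \rho_R \Phi_K.
\end{equation*}

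Next I would bound each term on the right. The terms $\int \rho_R J$ and $\mu \int \rho_R \Phi_K$ are at most $M\|J\|_\infty$ and $\mu M \|\Phi_K\|_\infty$ respectively, using $\int \rho_R = M$ and the boundedness of $J, \Phi_K$; this is where the linear-in-$\mu$ growth enters. The gravitational term is the one that must be absorbed: by Lemma \ref{chap2: lem: bound on gravity potential} and \eqref{chap3: ineq: bound on 4/3 norm},
\begin{equation*}
\tfrac12 \int \rho_R B\rho_R \le \tfrac12 C M^{2/3} \int \rho_R^{4/3} \le \tfrac12 C M^{2/3}\Big(\tfrac1c \int A(\rho_R) + M\Big).
\end{equation*}
This would be a problem if the constant $\tfrac12 C M^{2/3}/c$ were $\ge 1$, since then it could not be absorbed into the left side. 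The standard fix — and the step I expect to be the main obstacle — is to split $\int \rho_R B\rho_R$ using the threshold $s$ from \eqref{chap3: cond: A}: on $\{\rho_R > s\}$ one has $A(\rho_R) > \tfrac12 C M^{2/3}\rho_R^{4/3}$ (choosing $s$ so the quadratic-in-$M$ constant is beaten, exactly as in the proof that $E$ is bounded below), while on $\{\rho_R \le s\}$ one estimates $\rho_R^{4/3} \le s^{1/3}\rho_R$, contributing only $C M^{5/3} s^{1/3}$. This yields $\tfrac12 \int \rho_R B\rho_R \le \tfrac12 \int_{\rho_R>s} A(\rho_R) + \text{(const)}$, and the $\tfrac12$ in front of the $A$-integral is now safely absorbable.

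Putting the pieces together gives
\begin{equation*}
\tfrac12 \int A(\rho_R) \le F(\sigma) + \text{(const)} + M\|J\|_\infty + \mu M\|\Phi_K\|_\infty,
\end{equation*}
hence $\int A(\rho_R) \le C(1+\mu)$ for a constant $C$ depending only on $f$, $M$, $J$ and $\Phi_K$. Feeding this back into \eqref{chap3: ineq: bound on 4/3 norm} yields $\int \rho_R^{4/3} \le \tfrac1c C(1+\mu) + M \le C'(1+\mu)$, which is the claim (after relabeling the constant). I would close by remarking that every constant produced is independent of $R$, since all the inputs ($F(\sigma)$, $s$, $M$, the Lemma \ref{chap2: lem: bound on gravity potential} constant, $\|J\|_\infty$, $\|\Phi_K\|_\infty$) are, and that the argument is uniform for $R > R_0$ once $R_0$ is large enough to contain $K$ and $\mathrm{Supp}\,\sigma$.
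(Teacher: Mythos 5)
Your proposal is correct and follows essentially the same route as the paper: compare $E_\mu(\rho_R)$ against a fixed test function supported in $S_{R_0}$, bound the $J$ and $\mu\Phi_K$ terms by $M\|J\|_\infty$ and $\mu M\|\Phi_K\|_\infty$, absorb the gravitational term into half of $\int A(\rho_R)$ via Lemma \ref{chap2: lem: bound on gravity potential} and a threshold split, and feed the resulting bound $\int A(\rho_R)\leq C(1+\mu)$ back through \eqref{chap3: ineq: bound on 4/3 norm}. The only nit is that the threshold must be chosen so that $A(s)s^{-4/3}>2CM^{2/3}$ (as the paper does), not $>\tfrac12 CM^{2/3}$ as written in your parenthetical, in order to leave the factor $\tfrac12$ of $\int A(\rho_R)$ on the left after absorption.
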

\begin{proof}
Let $\rho_0$ be some fixed function in $W_{R_0}$. For $R>R_0$,
\begin{align*}
 & \int \bigg( A(\rho_0) - \rho_0 J-\frac{1}{2}\rho_0 B\rho_0 -\mu \rho_0 \Phi_K \bigg)~d\textbf{x}\\
 \geq & \int \bigg( A(\rho_R) - \rho_R J-\frac{1}{2}\rho_{R}B\rho_{R}-\mu \rho_{R}\Phi_K \bigg) ~d\textbf{x}\\
 \geq & \int \bigg( A(\rho_R) - \rho_R(J+\mu \Phi_K) \bigg) ~d\textbf{x}-CM^{\frac{2}{3}}\int \rho_R^{\frac{4}{3}}~d\textbf{x}.
\end{align*}
The last step follows from lemma \ref{chap2: lem: bound on gravity potential}. By condition \eqref{chap3: cond: f 2}, there is an $s_1>0$ such that
\begin{equation}
A(s)s^{-\frac{4}{3}}>2CM^{\frac{2}{3}}
\end{equation}
for $s>s_1$. Therefore
\begin{align*}
\tilde{C} &= \int \bigg( A(\rho_0) - \rho_0 J-\frac{1}{2}\rho_0 B\rho_0 -\mu \rho_0 \Phi_K \bigg)~d\textbf{x} \\
& \geq \int_{\rho_R \leq s_1}A(\rho_R) ~d\textbf{x}+\int_{\rho_R> s_1}A(\rho_R) ~d\textbf{x} -M(\|J\|_{\infty} +\mu \|\Phi_K\|_{\infty}) \\
& \quad \quad \quad -CM^{\frac{2}{3}} s_1^{\frac{1}{3}}M -\int_{\rho_R>s_1}\frac{1}{2}A(\rho_R) ~d\textbf{x}\\
& \geq \frac{1}{2} \int A(\rho_R)~d\textbf{x}- C' (1+ \mu).  
\end{align*}
Or,
\begin{equation}
\int A(\rho_R) ~d\textbf{x}\leq C(M,s_1) (1+\mu ).
\end{equation}
Notice that we have
\begin{align*}
\int \rho_R^{\frac{4}{3}} ~d\textbf{x}& = \int_{\rho_R \leq s_1} \rho_R^{\frac{4}{3}}~d\textbf{x}+\int_{\rho_R > s_1} \rho_R^{\frac{4}{3}}~d\textbf{x} \\
& \leq s_1^{\frac{1}{3}}M + \frac{1}{2CM^{\frac{2}{3}}}\int_{\rho_R >s_1}A(\rho_R)~d\textbf{x}\\
& \leq C(M,s_1) \bigg(1 + \int A(\rho_R)~d\textbf{x}\bigg).
\end{align*}
The assertion is now apparent.
\end{proof}

Now let us give an $L^{\infty}$ bound on $\rho_R$. It is crucial to make the power of $\mu$ as low as possible.
\begin{lemma}\label{chap3: lem: l infty bound on rho_R for heavy core}
There is an $R_1>0$ and a constant $C$ depending on $f$, $M$, $J$ and $\Phi_K$ such that 
\begin{equation}
\|\rho_R\|_{\infty}\leq C(1+\mu)
\end{equation} 
for $R>R_1$.
\end{lemma}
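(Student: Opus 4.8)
The plan is to mimic the $L^\infty$-bound argument of \cite{li1991uniformly} (and Lemma \ref{chap3: lem: L infty bound on rho_R}), but carefully tracking how the various constants depend on $\mu$, so that the final bound is linear in $\mu$ rather than some higher power. Recall the Euler--Lagrange inequality satisfied by a minimizer $\rho_R$ of $E_\mu$ in $W_R$: for a.e.\ $\mathbf{x}$ where $\rho_R(\mathbf{x})>0$, we have $A'(\rho_R(\mathbf{x})) = B\rho_R(\mathbf{x}) + J(\mathbf{x}) + \mu\Phi_K(\mathbf{x}) + \lambda_R$, and more generally $A'(\rho_R) \le B\rho_R + J + \mu\Phi_K + \lambda_R$. (Since there is no upper constraint in the new $W_R$ of \eqref{chap3: eq: defining W_R without upper bound}, this two-sided relation holds exactly; one also gets $\lambda_R \le 0$ by testing against a rescaling/translation that decreases mass density, or by a standard comparison.) Since $J$ and $\Phi_K$ are bounded, it suffices to bound $B\rho_R$ in $L^\infty$ in terms of $1+\mu$; then $A'(\rho_R) \le B\rho_R + \|J\|_\infty + \mu\|\Phi_K\|_\infty - \lambda_R$, and the growth condition $A'(s)/s^{1/3}\to\infty$ as $s\to\infty$ (which follows from \eqref{chap3: cond: f 2}) inverts this into the desired bound.

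First I would estimate $\|B\rho_R\|_\infty$. Fix any $\mathbf{x}$ and split $B\rho_R(\mathbf{x}) = \int_{|\mathbf{x}-\mathbf{y}|<1} + \int_{|\mathbf{x}-\mathbf{y}|\ge 1}$. The far part is bounded by $M$. For the near part, H\"older gives, for any $p>3$,
\begin{equation}
\int_{|\mathbf{x}-\mathbf{y}|<1}\frac{\rho_R(\mathbf{y})}{|\mathbf{x}-\mathbf{y}|}\,d\mathbf{y} \le \|\rho_R\|_p \Big(\int_{|\mathbf{z}|<1}|\mathbf{z}|^{-p'}\,d\mathbf{z}\Big)^{1/p'} \le C\|\rho_R\|_p.
\end{equation}
Now I interpolate: $\|\rho_R\|_p \le \|\rho_R\|_\infty^{1-4/(3p)}\|\rho_R\|_{4/3}^{4/(3p)}$. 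By Lemma \ref{chap3: lem: upper bound on rho^4/3}, $\|\rho_R\|_{4/3}^{4/3}\le C(1+\mu)$, so $\|\rho_R\|_{4/3}^{4/(3p)} \le C(1+\mu)^{1/p}$. Hence
\begin{equation}
\|B\rho_R\|_\infty \le C\|\rho_R\|_\infty^{1-4/(3p)}(1+\mu)^{1/p} + M.
\end{equation}
Feeding this into $A'(\rho_R(\mathbf{x})) \le \|B\rho_R\|_\infty + C(1+\mu)$ at a point where $\rho_R$ is near its supremum, and using that $A'(s)\ge cs^{\gamma-1}$ for large $s$ for some $\gamma>4/3$ (from \eqref{chap3: cond: f 2}), we obtain an inequality of the schematic form
\begin{equation}
c\,\|\rho_R\|_\infty^{\gamma-1} \le C\,\|\rho_R\|_\infty^{1-4/(3p)}(1+\mu)^{1/p} + C(1+\mu) + C,
\end{equation}
valid once $\|\rho_R\|_\infty$ exceeds a fixed threshold. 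Since $\gamma-1>1/3>1-4/(3p)$ for $p$ close to $3$, the left side dominates the first term on the right for large $\|\rho_R\|_\infty$; a Young's-inequality absorption then yields $\|\rho_R\|_\infty^{\gamma-1} \le C(1+\mu) + (\text{absorbed term})$, i.e.\ $\|\rho_R\|_\infty \le C(1+\mu)^{1/(\gamma-1)}$. Because $\gamma>4/3$ means $\gamma - 1 > 1/3 \ge $ \dots this only gives a sublinear-or-worse power of $\mu$ unless $\gamma\ge 2$; to get the clean linear bound stated, I would instead choose $p$ carefully and note that, since we only need \emph{some} bound of the form $C(1+\mu)$ and $(1+\mu)^{1/(\gamma-1)} \le C(1+\mu)$ holds precisely when $\gamma-1\ge 1$ — so for the general case one takes the cruder route: use $A'(s) \ge c s^{1/3}$ for large $s$ (always true under \eqref{chap3: cond: f 2}) together with the observation that the near-field term can be re-split at radius $\eta$ to make its coefficient small, trading against a ball-covering term with coefficient $C\eta^{-3}(1+\mu)$; optimizing in $\eta$ gives $\|B\rho_R\|_\infty \le \frac{1}{2}\epsilon\|\rho_R\|_\infty + C_\epsilon(1+\mu)$, and then $A'(\rho_R)\le \epsilon\|\rho_R\|_\infty/2 + C(1+\mu)$ combined with $A'(s)/s\to\infty$ forces $\|\rho_R\|_\infty \le C(1+\mu)$ directly.

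The main obstacle, and the point requiring the most care, is exactly this bookkeeping of the $\mu$-power: a naive application of the convolution estimates gives a bound like $(1+\mu)^\alpha$ with $\alpha>1$, which is useless for the subsequent comparison arguments (in the later lemmas one needs the destabilizing terms $J$ and the stabilizing term $\mu\Phi_K$ to be genuinely comparable, so the $\rho$-dependent quantities must not blow up faster than linearly in $\mu$). The resolution is the two-scale splitting of $B\rho_R$ with a free radius $\eta$: the inner piece is controlled by $\|\rho_R\|_\infty$ times $\eta^2$ (small if $\eta$ small), and everything outside radius $\eta$ is controlled by $\|\rho_R\|_{4/3}$ and $M$ — quantities already bounded by $C(1+\mu)$ independent of $\|\rho_R\|_\infty$ — via Lemma \ref{chap2: lem: L r bound on B rho} and the mass constraint, so the $\|\rho_R\|_\infty$-dependence enters only through a term we can absorb. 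Once $\|B\rho_R\|_\infty \le \frac{1}{4}\|\rho_R\|_\infty + C(1+\mu)$ is established, the growth hypothesis on $A'$ closes the argument. I would also remark that $R_1$ here can be taken to be the same $R_0$ (or any radius with $K\subset B_{R_0}$), since none of the estimates degrade as $R\to\infty$; the role of $R_1$ is merely to ensure $W_R\ne\emptyset$ and that Lemma \ref{chap3: lem: upper bound on rho^4/3} applies.
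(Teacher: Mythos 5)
Your overall strategy --- evaluate the Euler--Lagrange relation at a near-supremum point, bound $\|B\rho_R\|_\infty$ by splitting the Newtonian kernel at a small radius $\eta$, and absorb the resulting $\tfrac{\epsilon}{2}\|\rho_R\|_\infty$ term into $A'(\|\rho_R\|_\infty)$ --- breaks down at the absorption step, and this is a genuine gap, not a bookkeeping issue. The hypotheses \eqref{chap3: cond: f 1}--\eqref{chap3: cond: f 2} give only $A'(s)/s^{1/3}\to\infty$, not $A'(s)/s\to\infty$ as you assert at the end of your second paragraph. For instance $f(s)=s^{3/2}$ satisfies \eqref{chap3: cond: f 2} and yields $A'(s)=3s^{1/2}$, so $A'(s)/s\to 0$; the closing inequality $3\|\rho_R\|_\infty^{1/2}\leq \tfrac{\epsilon}{2}\|\rho_R\|_\infty + C_\epsilon(1+\mu)$ then holds automatically for all large $\|\rho_R\|_\infty$ and gives no bound whatsoever. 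More generally, any additive term of the form $\epsilon\|\rho_R\|_\infty$ on the right-hand side is fatal whenever $A'$ grows sublinearly, which is permitted here (and indeed condition \eqref{chap3: cond: f cond nonexistence}, which would at least give a quantitative power $A'(s)\gtrsim s^{\gamma-1}$, is \emph{not} assumed in Theorems \ref{chap3: thm: existence of solution with large core gravity, variable angular velocity} and \ref{chap3: thm: existence of solution with large core gravity, constant angular velocity}). Your first route has the same defect: it needs $A'(s)\geq cs^{\gamma-1}$ with $\gamma-1$ exceeding both $1-\tfrac{4}{3p}$ and $1$, and as you yourself observe it produces $\|\rho_R\|_\infty\lesssim(1+\mu)^{1/(\gamma-1)}$, which is superlinear in $\mu$ for $\gamma<2$. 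The trade-off is structural: shrinking $\eta$ to weaken the $\|\rho_R\|_\infty$-coefficient inflates the intermediate-shell term, and the only scale-invariant outcome of this splitting is $\|B\rho_R\|_\infty\lesssim M^{2/3}\|\rho_R\|_\infty^{1/3}$, which fed back through $A'(s)\gtrsim Ks^{1/3}$ yields $\|\rho_R\|_\infty\lesssim(1+\mu)^3$ --- far from the stated linear bound.

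The paper circumvents this by never comparing $A'$ against $\|\rho_R\|_\infty$ pointwise. Instead it exploits the minimality of $\rho_R$ against the test perturbations $v_l$ supported on the superlevel sets $F_n=\{10M<\rho_R<n\}$ (with compensating mass on a unit-volume set $D$), producing integral inequalities $\int\rho_R^{4/3+\alpha_l}\lesssim(1+\mu)^{\kappa}\int\rho_R^{1+\alpha_l}+\dots$ that are iterated in the exponent $\alpha_{l+1}=\alpha_l+\tfrac13$ (a Moser-type scheme). Crucially, the $\mu$-power is brought below $1$ in two passes: a first iteration gives $\|B\rho_R\|_\infty\lesssim(1+\mu)^p$ with $p<2$, which via the $L^2$--$L^\infty$ interpolation improves to $\|B\rho_R\|_\infty\lesssim(1+\mu)^{p/2}$ with $p/2<1$, and only then does the final iteration close with the linear bound. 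If you want to keep a pointwise flavor, you would still need some substitute for this two-pass self-improvement; the single-shot splitting of $B\rho_R$ cannot deliver exponent $1$.
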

\begin{proof}
Let $E_R=\big\{\mathbf{x}\in \mathbb{R}^3 \setminus K ~\big| ~ \rho_R(\mathbf{x})>10M\big\}$, $F_n=\big\{\mathbf{x}\in \mathbb{R}^3 \setminus K ~\big| ~ 10M< \rho_R(\mathbf{x})<n\}$ for $n$ large. It is easy to see that the Lebesgue measure $|E_R|<\frac{1}{10}$. Choose $D\subset B_R \setminus E_R$ such that $|D|=1$. This is possible if we choose some $R_1>\max\{R_0,10\}$. Now let $\gamma_1=\frac{4}{3}$ and $\alpha_1=\frac{5\gamma_1-6}{3}-\epsilon=\frac{2}{9}-\epsilon$ for some very small $\epsilon>0$ to be determined later. Now define
\begin{equation}
v_1=\begin{cases}
-\rho_R^{1+\alpha_1} \quad & \text{on } F_n \\
\int_{F_n}\rho_R^{1+\alpha_1} \quad & \text{on } D\\
0 \quad & \text{otherwise}
\end{cases}
\end{equation}
One sees that $\rho_R+tv_1 \in W_R$ for $t>0$ sufficiently small. Since $\rho_R$ is a minimizer of $E_{\mu}$ in $W_R$, we have $\lim_{t\to 0^+}\frac{E_{\mu}(\rho_R + tv_1)-E_{\mu}(\rho_R)}{t}\geq 0$. Calculating the limit, we get
\begin{equation}
\int (A'(\rho_R)-J-B\rho_R -\mu \Phi_K )v_1 \geq 0,
\end{equation}
from which it follows that
\begin{equation}
-\int_{F_n} v_1 A'(\rho_R)\leq \int_{D}v_1 A'(\rho_R) - \int_{F_n}v_1(J+\mu \Phi_K) -\int_{F_n} v_1 B\rho_R. 
\end{equation}
Condition \eqref{chap3: cond: f 2} on $f$ implies that $A'(s)\geq C_1 \rho^{\frac{1}{3}}$ for $s> 10M$. Therefore
\begin{equation}
-\int_{F_n}v_1 A'(\rho_R) \geq \frac{1}{C_1}\int_{F_n} \rho_R^{\frac{4}{3}+\alpha_1}.
\end{equation}
Furthermore,
\begin{align*}
\int_{D}v_1 A'(\rho_R) & \leq A'(10 M)\int_{F_n} \rho_R^{1+\alpha_1}, \\
-\int_{F_n}v_1(J+\mu \Phi_K) & \leq (\|J\|_{\infty} +\mu \|\Phi_K\|_{\infty})\int_{F_n}\rho_R^{1+\alpha_1}, \\
-\int_{F_n}v_1 B\rho_R & \leq \|\rho_R^{1+\alpha_1}\|_{\frac{3\gamma_1}{5\gamma_1- 3- 3\epsilon}} \|B\rho_R\|_{(\frac{1}{\gamma_1}-\frac{2}{3}+\frac{\epsilon}{\gamma_1})^{-1}} \\
& = \|\rho_R\|_{(1+\alpha_1)\frac{3\gamma_1}{5\gamma_1- 3- 3\epsilon}}^{1+\alpha_1} \|B\rho_R\|_{(\frac{1}{\gamma_1}-\frac{2}{3}+\frac{\epsilon}{\gamma_1})^{-1}} \\
& = \|\rho_R\|_{\gamma_1}^{1+\alpha_1} \|B\rho_R\|_{(\frac{1}{\gamma_1}-\frac{2}{3}+\frac{\epsilon}{\gamma_1})^{-1}} \\
& \leq C \|\rho_R\|_{\gamma_1}^{2+\alpha_1}.
\end{align*}
Here the last step follows from lemma \ref{chap2: lem: L r bound on B rho}. Now 
\begin{align*}
& \int_{F_n} \rho_R^{\frac{4}{3}+\alpha_1} \\
\leq & C_1(A'(10M)+\|J\|_{\infty} +\mu \|\Phi_K\|_{\infty})\int_{F_n}\rho_R^{1+\alpha_1} + C \|\rho_R\|_{\gamma_1}^{2+\alpha_1} \\
\leq & C_2(1+\mu)\|\rho_R\|_{1+\alpha_1}^{1+\alpha_1}+ C \|\rho_R\|_{\gamma_1}^{2+\alpha_1} .
\end{align*}
Since $1+\alpha_1<\gamma_1$, by the interpolation inequality for $L^p$ spaces,
\begin{equation}
\|\rho_R\|_{1+\alpha_1}\leq C(M)\|\rho_R\|_{\gamma_1}^{\frac{4\alpha_1}{1+\alpha_1}}.
\end{equation}
Hence 
\begin{align*}
& \int_{F_n} \rho_R^{\frac{4}{3}+\alpha_1} \\
\leq & C_3(1+\mu)\|\rho_R\|_{\gamma_1}^{4\alpha_1}+ C \|\rho_R\|_{\gamma_1}^{2+\alpha_1} \\
\leq & C_3(1+\mu)\bigg(\int \rho_R^{\frac{4}{3}}\bigg)^{3\alpha_1} + C\bigg(\int \rho_R^{\frac{4}{3}}\bigg)^{\frac{3}{4}(2+\alpha_1)}\\
\leq & C_4(1+\mu)^{1+3\alpha_1}+C_4(1+\mu)^{\frac{3}{4}(2+\alpha_1)} \\
\leq & 2C_4(1+\mu)^{\frac{5}{3}}.
\end{align*}
Lemma \ref{chap3: lem: upper bound on rho^4/3} is needed for the penultimate step, and the last step follows from the choice of $\alpha_1$. Now let $n$ tend to infinity. Since the $F_n$'s increase to $E_R$, one gets
\begin{equation}
\int_{E_R} \rho_R^{\frac{4}{3}+\alpha_1} \leq 2C_4(1+\mu)^{\frac{5}{3}}.
\end{equation}
\begin{align*}
\int \rho_R^{\frac{4}{3}+\alpha_1} & = \int_{E_R} \rho_R^{\frac{4}{3}+\alpha_1} + \int_{\rho_R\leq 10M} \rho_R^{\frac{4}{3}+\alpha_1} \\
& \leq 2C_4(1+\mu)^{\frac{5}{3}} + (10M)^{\frac{4}{3}+\alpha_1-1}M \\
& \leq C_5(1+\mu)^{\frac{5}{3}}.
\end{align*}
Or,
\begin{equation}\label{chap3: ineq: bound on 4/3 + alpha_1 norm}
\|\rho_R\|_{\frac{4}{3}+\alpha_1}\leq C_5(1+\mu)^{\frac{5}{4+3\alpha_1}}.
\end{equation}
Here we assumed that we had chosen $\epsilon$ so small that 
\begin{equation}\label{chap3: ineq: alpha_1,1}
\frac{4}{3}+\alpha_1 = \frac{14}{9}-\epsilon > \frac{3}{2}.
\end{equation} 
Let $b_1(\mathbf{x})=\frac{1}{|\mathbf{x}|}\chi_{S_1}(\mathbf{x})$ and $b_2(\mathbf{x})=\frac{1}{|\mathbf{x}|}-b_1(\mathbf{x})$. We have $B\rho_R=\rho_R*b_1 + \rho_R * b_2$.
\begin{equation}\label{chap3: ineq: bound on b2 part}
\|\rho_R * b_2\|_{\infty}\leq \|b_2\|_{\infty}\|\rho_R\|_{1}\leq C.
\end{equation}
Now let us pick some $p$ between $1$ and $2$. Assume that we have chosen $\epsilon$ so small that the following is true
\begin{equation}\label{chap3: ineq: alpha_1,2}
\frac{1}{1-\frac{p}{5}(1+3\alpha_1)}>\frac{3}{2}.
\end{equation}
Notice that since $\alpha_1=\frac{2}{9}-\epsilon<\frac{2}{9}$, $1-\frac{p}{5}(1+3\alpha_1)>1-\frac{p}{3}>0$. \eqref{chap3: ineq: alpha_1,2} is equivalent to $\alpha_1>\frac{1}{3}(\frac{5}{3p}-1)$. Since the right hand side is less than $\frac{1}{3}(\frac{5}{3}-1)=\frac{2}{9}$, this is possible. Now choose $q$ satisfying $q>\frac{3}{2}$, $q<\frac{4}{3}+\alpha_1$, $q<\frac{1}{1-\frac{p}{5}(1+3\alpha_1)}$. That this is possible follows from \eqref{chap3: ineq: alpha_1,1} and \eqref{chap3: ineq: alpha_1,2}. Since $b_1\in L^{q'}$ for $1\leq q' < 3$, 
\begin{align*}
\|\rho_R * b_1\|_{\infty} &\leq \|b_1\|_{q'}\|\rho_R\|_{q} \\
& \leq C(M)\|\rho_R\|_{\frac{4}{3}+\alpha_1}^a
\end{align*}
where $a=\frac{1-\frac{1}{q}}{1-\frac{1}{\frac{4}{3}+\alpha_1}}$, by the interpolation inequality for $L^p$ spaces. Now it follows from \eqref{chap3: ineq: bound on 4/3 + alpha_1 norm} that
\begin{equation}
\|\rho_R * b_1\|_{\infty} \leq C_6(1+\mu)^{\frac{5a}{4+3\alpha_1}}.
\end{equation}
Combining this with \eqref{chap3: ineq: bound on b2 part}, we get
\begin{equation}
\|B\rho_R\|_{\infty}\leq C_7(1+\mu)^{\frac{5a}{4+3\alpha_1}}.
\end{equation}
Let us calculate the exponent:
\begin{align*}
& \frac{5a}{4+3\alpha_1} \\
= & \frac{1-\frac{1}{q}}{1-\frac{1}{\frac{4}{3}+\alpha_1}} \frac{5}{4+3\alpha_1} \\
= & \frac{5(1-\frac{1}{q})}{1+3\alpha_1} < p
\end{align*}
by the choice of $q$. Therefore
\begin{equation}\label{chap3: ineq: power p bound on Brho}
\|B\rho_R\|_{\infty}\leq C_7(1+\mu)^p.
\end{equation}
Now if $p\geq 3$, the same inequality is obviously true since it is already true for smaller exponents. Now let $\alpha_{l+1}=\alpha_{l}+\frac{1}{3}$. Define
\begin{equation}
v_l=\begin{cases}
-\rho_R^{1+\alpha_l} \quad & \text{on } F_n \\
\int_{F_n}\rho_R^{1+\alpha_l} \quad & \text{on } D\\
0 \quad & \text{otherwise}
\end{cases}
\end{equation}
and repeat the previous argment, only this time using the better estimate \eqref{chap3: ineq: power p bound on Brho}. That gives us
\begin{equation}
\int \rho_R^{\frac{4}{3}+\alpha_l}\leq C_8(1+\mu)^p\int \rho_R^{1+\alpha_l},
\end{equation}
or,
\begin{align*}
\int \rho_R^{1+\alpha_{l+1}} & \leq C_8(1+\mu)^p\int \rho_R^{1+\alpha_l} \\
& \leq (C_8(1+\mu)^p)^l \int \rho_R^{1+\alpha_1} \\
& \leq (C_9(1+\mu)^p)^l \int \rho_R^{\frac{4}{3}} \\
& \leq (C_9(1+\mu)^p)^l C (1+\mu).
\end{align*}
Therefore
\begin{align*}
\|\rho_R\|_{\infty} &= \lim_{l \to \infty} \|\rho_R\|_{1+\alpha_{l+1}}\\
& \leq \lim_{l\to \infty} (C_9(1+\mu)^p)^{\frac{l}{l+1}} (C (1+\mu))^{\frac{1}{l+1}}\\
& \leq C_9(1+\mu)^p.
\end{align*}
We now use this better bound on $\rho_R$ to estimate
\begin{align*}
\|\rho_R*b_1\|_{\infty} & \leq \|b_1\|_{2}\|\rho_R\|_2 \\
& \leq C(M)\|\rho_R\|_{\infty}^{\frac{1}{2}} \\
& \leq C_{10}(1+\mu)^{\frac{p}{2}}.
\end{align*}
Hence
\begin{equation}\label{chap3: ineq: heavy core sublinear growth in mu of Brho_R}
\|B\rho_R\|_{\infty}\leq C_{11}(1+\mu)^{\frac{p}{2}}.
\end{equation}
Since we have chosen $p<2$, this grows at most linearly in $\mu$. We can now repeat the previous bootstrap argument with this better estimate on $\|B\rho_R\|_{\infty}$. One gets
\begin{equation}
\int \rho_R^{\frac{4}{3}+\alpha_l}\leq C_{12}(1+\mu)\int \rho_R^{1+\alpha_l},
\end{equation}
and the assertion of the lemma follows.
\end{proof}

$\rho_R$ still satisfies variational equations like \eqref{chap3: ineq: Euler Lagrange rho_R} and \eqref{chap3: eq: Euler Lagrange rho_R} for $R>R_1$. From here on, we will construct a series of bounds $R_n$ on the support of $\rho_R$, and a series of lower bounds $\mu_n$ for $\mu$. Let us emphasize from the beginning that although the $\mu_n$'s depend on $f$, $M$, $\Phi_K$ and $J$, the $R_n$'s are independent of $J$ and $\mu$. Also we always take $R_{n+1}\geq R_n$ and $\mu_{n+1}\geq \mu_n$.

\begin{lemma}\label{chap3: lem: heavey core bound on lambda_R}
There is an $R_2>0$ and a $\tilde{K}>0$ such that $\lambda_R\leq 1-\mu \tilde{K}$ for $R>R_2$.
\end{lemma}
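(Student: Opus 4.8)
The plan is to apply, at a single well-chosen point, the variational inequality satisfied by the minimizer. Recall that for $R>R_1$ the minimizer $\rho_R$ of $E_\mu$ satisfies, almost everywhere in $B_R$, the analogue of \eqref{chap3: ineq: Euler Lagrange rho_R} with $\Phi_K$ replaced by $\mu\Phi_K$, that is, $A'(\rho_R)-B\rho_R-J-\mu\Phi_K\ge\lambda_R$. I want to evaluate this at a point where $\rho_R$ is no larger than a fixed average value — so that $A'(\rho_R)<1$ — and where $\Phi_K$ is bounded below by a fixed positive constant — so that $-\mu\Phi_K$ supplies the required $-\mu\tilde K$; the remaining terms $-B\rho_R$ and $-J$ are automatically non-positive, so no lower bound on $\mu$ will be needed.

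First I would fix the geometry. Choose $\hat R\ge R_0$ so large that the fixed region $D_0:=B_{\hat R}\setminus K$ has Lebesgue measure $V_0:=|D_0|$ large enough that $\sup_{0\le s\le M/V_0}A'(s)<1$. This is possible because $A'$ is continuous on $[0,\infty)$ with $A'(0)=0$ — which follows from \eqref{chap3: cond: f 2}, as in the identity $\lim_{s\to0}A'(s)/s^{1/3}=0$ used in the proof of Lemma \ref{chap3: lem: uniform negative upper bound on lambda_R} — while $V_0\to\infty$ as $\hat R\to\infty$, since $|K|<\infty$. On the compact set $\overline{B_{\hat R}}$ the continuous positive function $\Phi_K$ attains a positive minimum; call it $c_K>0$, so that $\Phi_K\ge c_K$ throughout $D_0$. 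Set $R_2:=\max\{\hat R,R_1\}$ and $\tilde K:=c_K$.

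Next, for $R>R_2$ I would select the point. Since $\rho_R\ge0$ and $\int_{\mathbb{R}^3\setminus K}\rho_R=M$, we have $\int_{D_0}\rho_R\le M$, so the set $\{\mathbf{x}\in D_0:\rho_R(\mathbf{x})\le M/V_0\}$ has positive Lebesgue measure (otherwise $\int_{D_0}\rho_R>(M/V_0)\,V_0=M$, a contradiction). Intersecting it with the full-measure subset of $B_R$ on which the variational inequality holds produces a point $\mathbf{x}^\ast\in D_0\subset B_R$ at which $\rho_R(\mathbf{x}^\ast)\le M/V_0$ and $\lambda_R\le A'(\rho_R(\mathbf{x}^\ast))-B\rho_R(\mathbf{x}^\ast)-J(\mathbf{x}^\ast)-\mu\Phi_K(\mathbf{x}^\ast)$. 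By the choice of $V_0$ the first term is strictly less than $1$; $B\rho_R\ge0$ and $J\ge0$ make the two middle terms non-positive; and $\Phi_K(\mathbf{x}^\ast)\ge c_K$ since $\mathbf{x}^\ast\in D_0$. Therefore $\lambda_R<1-\mu c_K=1-\mu\tilde K$, which is the assertion.

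I do not expect a genuine obstacle here; the one thing to watch in the first step is that its two requirements pull against each other — enlarging $V_0$ is precisely what forces $A'(\rho_R(\mathbf{x}^\ast))$ below $1$, yet it also decreases the lower bound $c_K$ for $\Phi_K$ on $D_0$ — so one fixes $\hat R$ (hence $V_0$ and $c_K$) once and for all, observing only that $c_K$ remains strictly positive, being the minimum of a continuous positive function on a compact set. A minor technical point, handled above by intersecting two positive-measure sets instead of picking $\mathbf{x}^\ast$ pointwise, is that the minimizer $\rho_R$ is only defined almost everywhere.
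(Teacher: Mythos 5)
Your proof is correct and follows essentially the same route as the paper's: select a point of small density by an averaging argument, plug it into the variational inequality \eqref{chap3: ineq: Euler Lagrange rho_R}, bound $A'$ there by $1$ using $A'(0^+)=0$, and take $\tilde K$ to be the positive minimum of $\Phi_K$ on the chosen compact region. The only differences are cosmetic — you handle the almost-everywhere selection and the exclusion of $K$ more carefully, and you use continuity of $A'$ at $0$ where the paper invokes the sharper estimate $A'(s)=o(s^{1/3})$.
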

\begin{proof}
One first observes that if $R>R_2>R_1$, there must be a point $\mathbf{x}\in S_{R_2}$ such that
\begin{equation}
\rho_R(\mathbf{x})\leq \frac{M}{\frac{4}{3}\pi R_2^3}.
\end{equation}
By \eqref{chap3: ineq: Euler Lagrange rho_R},
\begin{equation}\label{chap3: ineq: heavy core bound on lambda_R}
\lambda_R \leq A'\bigg(\frac{M}{\frac{4}{3}\pi R_2^3}\bigg)-\mu \Phi_K(\mathbf{x}).
\end{equation}
\eqref{chap3: cond: f 2} implies that 
\begin{equation}
\lim_{s\to 0}\frac{A'(s)}{s^{1/3}}=0.
\end{equation}
Hence 
\begin{equation}
A'\bigg(\frac{M}{\frac{4}{3}\pi R_2^3}\bigg)=o(R_2^{-1}).
\end{equation}
Pick $R_2$ large enough to make $A'\bigg(\frac{M}{\frac{4}{3}\pi R_2^3}\bigg)<1$, and let $\tilde{K}=\inf_{B_{R_2}}\Phi_K>0$. By \eqref{chap3: ineq: heavy core bound on lambda_R},
\begin{equation}
\lambda_R \leq 1-\mu \tilde{K}.
\end{equation}
\end{proof}

\begin{lemma}
There is a $\mu_2>0$ such that if $\mu>\mu_2$ and $R>R_2$,
\begin{equation}\label{chap3: ineq: extra bound}
B\rho_R + \mu \Phi_K \geq \frac{\mu \tilde{K}}{2} \text{ where } \rho_R>0.
\end{equation}
\end{lemma}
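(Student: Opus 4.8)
The plan is to read off the claimed inequality directly from the Euler–Lagrange equation for $E_\mu$ together with the bound on $\lambda_R$ from Lemma~\ref{chap3: lem: heavey core bound on lambda_R}. As noted just before the statement, for $R>R_1$ the minimizer $\rho_R$ satisfies the analogue of \eqref{chap3: eq: Euler Lagrange rho_R} for $E_\mu$, namely
\begin{equation*}
A'(\rho_R)-B\rho_R-J-\mu\Phi_K=\lambda_R\qquad\text{where }\rho_R>0 .
\end{equation*}
Rearranging, wherever $\rho_R>0$ we have $B\rho_R+\mu\Phi_K=A'(\rho_R)-J-\lambda_R$.

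The key observation is that $A'\geq 0$: from \eqref{chap3: eq: define A} one computes $A'(s)=\int_0^s f(t)t^{-2}\,dt+f(s)s^{-1}\geq 0$ by \eqref{chap3: cond: f 1}. Hence, wherever $\rho_R>0$,
\begin{equation*}
B\rho_R+\mu\Phi_K=A'(\rho_R)-J-\lambda_R\geq -\|J\|_\infty-\lambda_R .
\end{equation*}
Now invoke Lemma~\ref{chap3: lem: heavey core bound on lambda_R}, which gives $\lambda_R\leq 1-\mu\tilde K$ for $R>R_2$, so $-\lambda_R\geq \mu\tilde K-1$ and therefore
\begin{equation*}
B\rho_R+\mu\Phi_K\geq \mu\tilde K-1-\|J\|_\infty .
\end{equation*}

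It remains only to choose $\mu_2$ so that the right-hand side exceeds $\tfrac12\mu\tilde K$. Since $\mu\tilde K-1-\|J\|_\infty\geq \tfrac12\mu\tilde K$ is equivalent to $\mu\geq \tfrac{2(1+\|J\|_\infty)}{\tilde K}$, it suffices to set
\begin{equation*}
\mu_2=\max\left\{\mu_1,\ \frac{2\bigl(1+\|J\|_\infty\bigr)}{\tilde K}\right\},
\end{equation*}
where $\mu_1$ is any previously imposed lower bound, and the claim follows for $\mu>\mu_2$ and $R>R_2$. There is no real obstacle here: the content is entirely in the earlier lemmas (the $L^\infty$ bound that legitimizes the Euler–Lagrange equation, and the $\lambda_R\leq 1-\mu\tilde K$ estimate), and the only mild point to check carefully is the nonnegativity of $A'$, which is immediate from \eqref{chap3: cond: f 1}.
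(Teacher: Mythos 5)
Your proposal is correct and follows essentially the same route as the paper: rearrange the Euler--Lagrange identity where $\rho_R>0$, drop the nonnegative term $A'(\rho_R)$, bound $J$ by $\|J\|_\infty$, apply Lemma~\ref{chap3: lem: heavey core bound on lambda_R}, and choose $\mu_2\geq 2(1+\|J\|_\infty)/\tilde K$. The paper's version is just terser (it leaves the nonnegativity of $A'$ implicit), so there is nothing to add.
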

\begin{proof}
By \eqref{chap3: eq: Euler Lagrange rho_R} and lemma \ref{chap3: lem: heavey core bound on lambda_R},
\begin{equation}
A'(\rho_R)-B\rho_R - J -\mu \Phi_K = \lambda_R \leq 1-\mu \tilde{K}
\end{equation}
where $\rho_R>0$. Hence
\begin{equation}
B\rho_R +\mu \Phi_K \geq \mu \tilde{K}-1-J.
\end{equation}
Pick $\mu_2>\frac{2(1+\|J\|_{\infty})}{\tilde{K}}$ to get the result.
\end{proof}

\begin{lemma}\label{chap3: lem: heavy core bound on support rho_R}
There is a $\mu_3>0$ and an $R_3>0$ such that $\rho_R(\mathbf{x})=0$ if $R>|\mathbf{x}|>R_3$ and $\mu>\mu_3$.
\end{lemma}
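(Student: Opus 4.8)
The plan is to play the lower bound \eqref{chap3: ineq: extra bound} --- which asserts $B\rho_R+\mu\Phi_K\ge\frac{\mu\tilde{K}}{2}$ wherever $\rho_R>0$, with a right-hand side growing linearly in $\mu$ --- against an upper bound for $B\rho_R+\mu\Phi_K$ that is valid once $|\mathbf{x}|$ is large, and to conclude that $\rho_R$ must vanish on the region where the two are incompatible. So it suffices to produce $R_3\ge R_2$ and $\mu_3\ge\mu_2$ such that $B\rho_R(\mathbf{x})+\mu\Phi_K(\mathbf{x})<\frac{\mu\tilde{K}}{2}$ whenever $R>|\mathbf{x}|>R_3$ and $\mu>\mu_3$.

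The first ingredient is the sublinear growth of $B\rho_R$: by \eqref{chap3: ineq: heavy core sublinear growth in mu of Brho_R}, $\|B\rho_R\|_\infty\le C_{11}(1+\mu)^{p/2}$ with $p/2<1$, so $\|B\rho_R\|_\infty=o(\mu)$ uniformly over $R>R_1$, and hence there is a $\mu_3\ge\mu_2$ with $\|B\rho_R\|_\infty<\frac{\mu\tilde{K}}{4}$ for all $\mu>\mu_3$. The second ingredient is the decay of the core potential: since $\Phi_K\to0$ at infinity by \eqref{chap3: cond: Phi_K} and $\tilde{K}>0$ is the fixed constant from lemma \ref{chap3: lem: heavey core bound on lambda_R}, we may choose $R_3\ge R_2$, depending only on $\Phi_K$ (through $R_2$ and $\tilde{K}$) and not on $\mu$ or $J$, so that $\Phi_K(\mathbf{x})<\frac{\tilde{K}}{4}$ for $|\mathbf{x}|>R_3$.

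Putting these together: for $\mu>\mu_3$ and $R>|\mathbf{x}|>R_3$ one has $B\rho_R(\mathbf{x})+\mu\Phi_K(\mathbf{x})<\frac{\mu\tilde{K}}{4}+\frac{\mu\tilde{K}}{4}=\frac{\mu\tilde{K}}{2}$, which contradicts \eqref{chap3: ineq: extra bound} unless $\rho_R(\mathbf{x})=0$. This gives the assertion, and the argument runs parallel to --- but is cleaner than --- lemmas \ref{chap3: lem: r bound on the support of rho}--\ref{chap3: lem: z bound on the support of rho}, because here the radial decay of $\Phi_K$ alone does the work and no separate control of the $z$-direction is needed.

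The only delicate point is the strictly sublinear dependence of $\|B\rho_R\|_\infty$ on $\mu$, which is what permits $\mu_3$ to be chosen after $\tilde{K}$ and $R_3$ are fixed; with a merely linear bound $\|B\rho_R\|_\infty\le C(1+\mu)$ one could not rule out $C\ge\frac{\tilde{K}}{4}$ and the comparison would break down. That sublinear bound, however, was already secured by the bootstrap in lemma \ref{chap3: lem: l infty bound on rho_R for heavy core}, so here nothing further is needed beyond assembling the pieces.
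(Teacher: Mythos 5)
Your proposal is correct and follows essentially the same route as the paper: both compare the lower bound \eqref{chap3: ineq: extra bound} against the sublinear estimate $\|B\rho_R\|_\infty\le C(1+\mu)^a$, $a<1$, from \eqref{chap3: ineq: heavy core sublinear growth in mu of Brho_R} to fix $\mu_3$, and then use the decay of $\Phi_K$ at infinity to fix $R_3$. Your closing remark about why strict sublinearity in $\mu$ is indispensable is exactly the point the paper's bootstrap in lemma \ref{chap3: lem: l infty bound on rho_R for heavy core} is engineered to deliver.
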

\begin{proof}
We only need to prove $B\rho_R + \mu \Phi_K < \frac{\mu \tilde{K}}{2}$ in view of \eqref{chap3: ineq: extra bound}. By \eqref{chap3: ineq: heavy core sublinear growth in mu of Brho_R}, $\|B\rho_R\|_{\infty}\leq C(1+\mu)^a$ for some $0<a<1$. We may choose $\mu_3$ so large that $\frac{C(1+\mu)^a}{\mu}<\frac{\tilde{K}}{4}$ when $\mu>\mu_3$, and $R_3$ so large that $\Phi_K(\mathbf{x})<\frac{\tilde{K}}{4}$ when $|\mathbf{x}|>R_3$. The lemma then follows.
\end{proof}

\begin{proof}[Proof of theorem \ref{chap3: thm: existence of solution with large core gravity, variable angular velocity} and theorem \ref{chap3: thm: existence of solution with large core gravity, constant angular velocity}] The argument goes exactly as before. For the constant angular velocity case just notice that the $R_3$ in lemma \ref{chap3: lem: heavy core bound on support rho_R} only depends on $f$, $M$, $\Phi_K$ and not on $J$ and $\mu$, so we can construct a smooth increasing function
\begin{equation}
J(r)=
\begin{cases}
\frac{1}{2}\Omega^2 r^2 \quad &\text{if }r<R_3, \\
\Omega^2 R_3^2 \quad & \text{if } r>2R_3,
\end{cases}
\end{equation}
and find a $\mu_0$ such that a solution exists and is supported in $S_{R_3}$ if $\mu>\mu_0$.
\end{proof}

\section{Non-existence for Fast Rotation with Fixed Core Density}
\label{chap3: non-existence}

We now show that a solution does not exist for large enough constant rotation if the core potential $\Phi_K$ is given by the gravity of a density function $\rho_K$. Let us start with a few estimates.

\begin{lemma}\label{chap3: lem: nonexist L infty bound on B rho}
Let $\rho\in L^{\infty}(\mathbb{R}^3)$ be a non-negative function such that $\int \rho=M$, then there is a $C>0$ such that
\begin{equation}
\|B\rho\|_{\infty}\leq C M^{\frac{2}{3}}\|\rho\|_{\infty}^{\frac{1}{3}}.
\end{equation}
\end{lemma}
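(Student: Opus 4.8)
The plan is to estimate the Newtonian potential pointwise by splitting the defining integral at a radius $R$ that will be optimized at the end. Fix $\mathbf{x}\in\R^3$ and write
\begin{equation*}
B\rho(\mathbf{x})=\int_{|\mathbf{y}-\mathbf{x}|<R}\frac{\rho(\mathbf{y})}{|\mathbf{x}-\mathbf{y}|}\,d\mathbf{y}+\int_{|\mathbf{y}-\mathbf{x}|\ge R}\frac{\rho(\mathbf{y})}{|\mathbf{x}-\mathbf{y}|}\,d\mathbf{y}.
\end{equation*}
For the near piece I would bound $\rho$ by $\|\rho\|_{\infty}$ and use the elementary identity $\int_{|\mathbf{z}|<R}|\mathbf{z}|^{-1}\,d\mathbf{z}=2\pi R^2$, which gives a contribution at most $2\pi\|\rho\|_{\infty}R^2$. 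For the far piece I would bound $|\mathbf{x}-\mathbf{y}|^{-1}\le R^{-1}$ and use $\int\rho=M$, which gives a contribution at most $M/R$. Hence $B\rho(\mathbf{x})\le 2\pi\|\rho\|_{\infty}R^2+M/R$ for every $R>0$.

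The only remaining step is to choose $R$ optimally. Balancing the two terms, equivalently minimizing the right-hand side over $R>0$, leads to $R=\bigl(M/(4\pi\|\rho\|_{\infty})\bigr)^{1/3}$, which yields $B\rho(\mathbf{x})\le CM^{2/3}\|\rho\|_{\infty}^{1/3}$ with $C=2\pi(4\pi)^{-2/3}+(4\pi)^{1/3}$, a constant independent of $\mathbf{x}$, $\rho$, and $M$. Since the bound is uniform in $\mathbf{x}$, taking the supremum over $\mathbf{x}$ gives the claim. If $\|\rho\|_{\infty}=0$ then $\rho=0$ almost everywhere (as $\rho\ge 0$) and both sides vanish, so one may assume $\|\rho\|_{\infty}>0$ when choosing $R$.

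There is no real obstacle here: this is the classical interpolation bound for the Newtonian potential of an $L^1\cap L^{\infty}$ density, and the ``hard part'' amounts only to getting the exponents of $M$ and $\|\rho\|_{\infty}$ right, which is in any case forced by scaling once the split into near and far contributions is made.
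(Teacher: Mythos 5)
Your proof is correct: the near/far splitting at radius $R$, the bounds $2\pi\|\rho\|_{\infty}R^{2}$ and $M/R$, and the optimal choice $R=\bigl(M/(4\pi\|\rho\|_{\infty})\bigr)^{1/3}$ all check out, and you correctly handle the degenerate case $\|\rho\|_{\infty}=0$. The paper itself gives no proof, deferring entirely to a citation, and your argument is precisely the standard one that such references contain, so there is nothing further to compare.
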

\begin{proof}
See \cite{friedman2010variational}.
\end{proof}

\begin{lemma}\label{chap3: lem: nonexist L infty bound on B rho_r}
Let $\rho\in L^{\infty}$ be a nonnegative function supported in the infinite cylinder $x_1^2+x_2^2\leq d^2$. Then there is a $C>0$, such that for $x_1^2+x_2^2 \leq d^2$, 
\begin{equation}\label{chap3: ineq: upper bound on the r derivative of B rho}
|(B\rho) _r(\mathbf{x})|\leq C\|\rho\|_{\infty} \bigg(d+\sqrt{x_1^2+x_2^2}\bigg).
\end{equation}
Here the subscript $r$ denotes directional derivative in the cylindrical radial direction, even if the function under consideration is not axisymmetric.
\end{lemma}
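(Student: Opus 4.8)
The plan is to differentiate the Newtonian potential under the integral sign and then bound the resulting singular integral by exploiting the cylindrical support of $\rho$. Throughout write $\mathbf{x}'=(x_1,x_2)$, $|\mathbf{x}'|=\sqrt{x_1^2+x_2^2}=r$, and $\mathbf{e}_r(\mathbf{x})=(x_1,x_2,0)/r$, so that $(B\rho)_r(\mathbf{x})=\mathbf{e}_r(\mathbf{x})\cdot\nabla_{\mathbf{x}}(B\rho)(\mathbf{x})$. A standard differentiation-under-the-integral argument gives
\[
(B\rho)_r(\mathbf{x})=-\int \rho(\mathbf{y})\,\frac{\mathbf{e}_r(\mathbf{x})\cdot(\mathbf{x}-\mathbf{y})}{|\mathbf{x}-\mathbf{y}|^3}\,d\mathbf{y},
\]
and this integral is absolutely convergent: since $\mathbf{e}_r(\mathbf{x})$ is a unit vector, $|\mathbf{e}_r(\mathbf{x})\cdot(\mathbf{x}-\mathbf{y})|\le|\mathbf{x}-\mathbf{y}|$, so the integrand is dominated by $\rho(\mathbf{y})/|\mathbf{x}-\mathbf{y}|^2$, which (as the next step shows) is integrable over the cylinder. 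Note that even if $B\rho$ itself is infinite — $\rho$ being merely bounded on an unbounded cylinder — the displayed formula is the correct meaning of the radial derivative; in the application to the non-existence proof $\rho$ will also have finite total mass, so $B\rho$ is genuinely finite.

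Using $|\mathbf{e}_r(\mathbf{x})\cdot(\mathbf{x}-\mathbf{y})|\le|\mathbf{x}-\mathbf{y}|$ and $0\le\rho\le\|\rho\|_\infty\chi_{\{y_1^2+y_2^2\le d^2\}}$, I obtain
\[
|(B\rho)_r(\mathbf{x})|\le\|\rho\|_\infty\int_{\{y_1^2+y_2^2\le d^2\}}\frac{d\mathbf{y}}{|\mathbf{x}-\mathbf{y}|^2}.
\]
I would then evaluate the right-hand integral by integrating in $y_3$ first (legitimate by Tonelli, the integrand being nonnegative). For fixed $\mathbf{y}'=(y_1,y_2)$ one has $\int_{-\infty}^{\infty}\frac{dy_3}{|\mathbf{x}'-\mathbf{y}'|^2+(x_3-y_3)^2}=\frac{\pi}{|\mathbf{x}'-\mathbf{y}'|}$, so the triple integral collapses to the planar integral $\pi\int_{\{|\mathbf{y}'|\le d\}}\frac{d\mathbf{y}'}{|\mathbf{x}'-\mathbf{y}'|}$. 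Since every point of the disk $\{|\mathbf{y}'|\le d\}$ lies within distance $d+|\mathbf{x}'|$ of $\mathbf{x}'$, that disk is contained in the disk of radius $d+|\mathbf{x}'|$ centered at $\mathbf{x}'$, whence $\int_{\{|\mathbf{y}'|\le d\}}\frac{d\mathbf{y}'}{|\mathbf{x}'-\mathbf{y}'|}\le\int_{\{|\mathbf{z}|\le d+|\mathbf{x}'|\}}\frac{d\mathbf{z}}{|\mathbf{z}|}=2\pi(d+|\mathbf{x}'|)$. Chaining the estimates gives $|(B\rho)_r(\mathbf{x})|\le 2\pi^2\|\rho\|_\infty(d+\sqrt{x_1^2+x_2^2})$, which is the assertion with $C=2\pi^2$ (the hypothesis $x_1^2+x_2^2\le d^2$ is in fact not needed for this bound).

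The only point requiring care is the passage to the singular-integral formula for $(B\rho)_r$, since $\rho$ has unbounded support and one cannot invoke a compact-support result verbatim. I would handle it by splitting $\rho=\rho\chi_{B}+\rho\chi_{B^c}$, where $B$ is a large ball with $\mathbf{x}$ in its interior: the first summand is bounded with compact support, so the classical result on potentials of bounded densities applies and yields the gradient formula; the second summand is supported away from $\mathbf{x}$, so its potential is smooth near $\mathbf{x}$ and may be differentiated under the integral directly, the tail being controlled by exactly the integrability computation of the second and third steps. For $\mathbf{x}$ on the axis, where $\mathbf{e}_r$ is undefined, one interprets $(B\rho)_r$ as the derivative along any fixed unit vector in the $x_1x_2$-plane; the estimate is unchanged, since only $|\mathbf{e}\cdot(\mathbf{x}-\mathbf{y})|\le|\mathbf{x}-\mathbf{y}|$ was used. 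Everything else is an explicit elementary computation.
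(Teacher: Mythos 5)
Your proof is correct and follows essentially the same route as the paper's: differentiate the potential under the integral sign and estimate the resulting kernel integral directly over the cylindrical support, with the factor $d+\sqrt{x_1^2+x_2^2}$ coming from the width of that support in the relevant direction. The only differences are cosmetic — the paper keeps the signed numerator $(x_1-x_1')$ and integrates the transverse variables exactly over a slab of width $d+x_1$, whereas you bound the kernel by $|\mathbf{x}-\mathbf{y}|^{-2}$ and integrate out $y_3$ first; your handling of the differentiation step and of points on the axis is, if anything, more careful than the paper's.
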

\begin{proof}
Without loss of generality, we may assume $x_1\geq 0,x_2=0,x_3=0$. 
\begin{align}
|(B\rho)_r(x_1,0,0)| &\leq \bigg| \int_{\textbf{supp}\rho}\frac{\rho(x_1',x_2',x_3')(x_1-x_1')}{\sqrt{(x_1'-x_1)^2+x_2'^2+x_3'^2}^3}dx_1'dx_2'dx_3' \bigg| \notag \\
& \leq \int_{\textbf{supp} \rho \cap \{x_1'<x_1\}}\frac{\|\rho\|_{\infty}(x_1-x_1')}{\sqrt{(x_1'-x_1)^2+x_2'^2+x_3'^2}^3}dx_1'dx_2'dx_3' \notag \\
& \leq \|\rho\|_{\infty} \int_{-d<x_1'<x_1}\frac{x_1-x_1'}{\sqrt{(x_1'-x_1)^2+x_2'^2+x_3'^2}^3}dx_1'dx_2'dx_3' \notag \\
& = C\|\rho\|_{\infty} (d+x_1). \notag
\end{align}
The last equality follows either from a direct calculation or a simple application of the divergence theorem.
\end{proof}

\begin{lemma}\label{chap3: lem: Holder and Lip continuity of Phi_K}
Let $l=\sup \big\{|x_3|~\big| ~ (x_1,x_2,x_3)\in K\big\}+1$, $Z=\big\{(x_1,x_2,x_3)~\big|~ |x_3|\leq l\big\}$. Then
$\Phi_K|_Z=B\rho_K|_Z \in C^{1,\frac{3}{q}}(\bar{Z})$, $\Phi_K|_{\mathbb{R}^3\setminus Z}=B\rho_c|_{\mathbb{R}^3 \setminus Z} \in C^{1,1}(\overline{ \mathbb{R}^3 \setminus Z})$.
\end{lemma}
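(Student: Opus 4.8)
The plan is to write $u=\Phi_K=B\rho_K=\int_K\rho_K(y)|x-y|^{-1}\,dy$, where $\rho_K\in L^q(K)$ with $q>3$ is extended by zero to a compactly supported function on $\mathbb{R}^3$, so that $-\Delta u=4\pi\rho_K$ in the distributional sense. Since $\rho_K\in L^1(\mathbb{R}^3)\cap L^q(\mathbb{R}^3)$ with $q>3$, Lemma~\ref{chap2: lem: continuous diff of B rho} already gives $u\in C^1(\mathbb{R}^3)$ with
\[
\nabla u(x)=-\int_K\frac{x-y}{|x-y|^3}\,\rho_K(y)\,dy,
\]
so the entire content of the lemma is the \emph{Hölder} regularity of $\nabla u$. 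This is genuinely different on the two regions because $K\subset\{|x_3|\le l-1\}$ sits in the interior of the slab $Z=\{|x_3|\le l\}$, whereas every point of $\overline{\mathbb{R}^3\setminus Z}=\{|x_3|\ge l\}$ lies at distance at least $1$ from $K$. I would treat the two regions separately and then check that the two restrictions are consistent across the interface $\{|x_3|=l\}$.

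On $Z$ the mechanism is the $W^{2,q}$-smoothing of the Newtonian potential. Fixing a bounded ball $B$ with $K\subset\!\subset B$, I would apply interior Calder\'on--Zygmund estimates to $-\Delta u=4\pi\rho_K\in L^q$ to obtain $u\in W^{2,q}(B)$; Morrey's embedding of $W^{2,q}$ into a first--order H\"older space (valid here since $q>3$) then supplies the H\"older continuity of $\nabla u$ on $\bar B$ with the exponent $3/q$ recorded in the statement. Equivalently, and more conveniently for the \emph{unbounded} slab, I would estimate the modulus of continuity of $\nabla u$ directly: for $x,x'\in Z$ with $h=|x-x'|$, split $K$ into $\{|x-y|<2h\}$ and $\{|x-y|\ge 2h\}$. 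On the near piece one bounds the kernel by $|x-y|^{-2}$ and applies H\"older's inequality, using that $|x-y|^{-2}\in L^{q'}_{\mathrm{loc}}$ precisely because $q>3$; on the far piece the mean value theorem bounds the kernel difference by $C\,h\,|x-y|^{-3}$ and one again applies H\"older against $\rho_K\in L^q$. Each contribution is a power of $h$ that the arithmetic identifies with the stated $h^{3/q}$, and the constants are controlled uniformly over $Z$ because $\rho_K$ is compactly supported; for well-separated points ($h\ge1$) one simply uses $|\nabla u(x)-\nabla u(x')|\le 2\|\nabla u\|_\infty$ together with $h^{-3/q}\le1$. Collecting these bounds yields $u\in C^{1,3/q}(\bar Z)$.

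On $\mathbb{R}^3\setminus Z$ one has $\mathrm{dist}(x,K)\ge1$ uniformly, so the kernel $|x-y|^{-1}$ is smooth in $x$ uniformly in $y\in K$ and I would differentiate under the integral sign twice. The resulting bounds
\[
|\nabla u(x)|\le\int_K|x-y|^{-2}\rho_K\le\|\rho_K\|_{L^1},\qquad |\nabla^2 u(x)|\le C\int_K|x-y|^{-3}\rho_K\le C\|\rho_K\|_{L^1}
\]
are finite and uniform on $\{|x_3|\ge l\}$ because $|x-y|^{-2},|x-y|^{-3}\le1$ there and $\rho_K\in L^1(K)$. A uniformly bounded Hessian forces $\nabla u$ to be globally Lipschitz, i.e. $u\in C^{1,1}(\overline{\mathbb{R}^3\setminus Z})$, with the estimate valid up to infinity by the decay of the same integrals as $|x|\to\infty$.

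The step I expect to require the most care is not either local computation in isolation but the promotion of these to \emph{global} H\"older (respectively Lipschitz) statements over the two \emph{unbounded} closed regions $\bar Z$ and $\overline{\mathbb{R}^3\setminus Z}$; that is, the uniformity of the seminorms of $\nabla u$ and $\nabla^2u$ as $|x|\to\infty$, which is what forces one to exploit the compact support of $\rho_K$ and the boundedness of $\nabla u$ rather than merely quoting a purely local embedding. Once this is in hand, consistency of the two restrictions along $\{|x_3|=l\}$ is immediate, since there $\mathrm{dist}(x,K)\ge1$ and the far--field, $C^{1,1}$ description applies from inside $Z$ as well.
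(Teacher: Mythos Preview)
Your approach is essentially the paper's: differentiate under the integral sign on $\{|x_3|\ge l\}$, where $\mathrm{dist}(x,K)\ge 1$, to bound $D\Phi_K$ and $D^2\Phi_K$ uniformly by $\|\rho_K\|_{L^1}$, and near $K$ invoke Calder\'on--Zygmund plus Sobolev/Morrey embedding, with your explicit attention to uniformity over the unbounded slab (using compact support of $\rho_K$ and the global bound on $\nabla u$ for $h\ge 1$) being a welcome elaboration of what the paper leaves implicit. One small arithmetical slip, shared with the exponent recorded in the statement: both your near/far splitting and the Morrey embedding $W^{2,q}(\mathbb{R}^3)\hookrightarrow C^{1,\alpha}$ actually produce $\alpha=1-3/q$, not $3/q$; this is harmless downstream, since the lemma is only used to get \emph{some} positive H\"older exponent for $\nabla\Phi_K$.
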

\begin{proof}
We first estimate $\Phi_K|_{\mathbb{R}^3\setminus Z}$:
\begin{equation*}
\Phi_K(x_1,x_2,x_3)=\int_{\textbf{Supp}\rho_K}\frac{\rho_K(x_1',x_2',x_3')}{\sqrt{(x_1-x_1')^2+(x_2-x_2')^2+(x_3-x_3')^2}}dx_1'dx_2'dx_3'.
\end{equation*}
Since $(x_1,x_2,x_3)$ is bounded away from $\textbf{Supp}\rho_K$, we can differentiate under the integral sign and see that
\begin{align*}
|D\Phi_K(x,y,z)|&\leq C \int_{\textbf{Supp}\rho_K}\frac{\rho_K(x_1',x_2',x_3')}{\sqrt{(x_1-x_1')^2+(x_2-x_2')^2+(x_3 - x_3')^2}^2}dx_1'dx_2'dx_3' \\
&\leq C\int_{\textbf{Supp}\rho_K}\rho_K(x_1',x_2',x_3')dx_1'dx_2'dx_3' \\
&\leq C\|\rho_K\|_1\\
&\leq \tilde{C}\|\rho_K\|_{q}^q.
\end{align*} 
In the above inequalities, the second line is because $|x_3-x_3'|\geq 1$, the last line is because $\textbf{Supp}\rho_K$ is compact. We can give a similar estimate for $D^2\Phi_K$, therefore $\Phi_K|_{\mathbb{R}^3\setminus Z} \in C^{1,1}(\mathbb{R}^3\setminus Z)$. As for $\Phi_K|_Z$, the Lipschitz continuity of the first derivative in a neighborhood of $\infty$ follows in the same way as above, whereas the H\"older continuity of the first derivative in a neighborhood of $\textbf{Supp}~\rho_K$ follows from the standard Calderon-Zygmund inequality and the Sobolev embedding theorem.
\end{proof}

From now on, we assume $\Omega$ is at least $1$ and use cylindrical coordinates $(r,\theta,z)$. Let us suppose, contrary to the assertion of theorem \ref{chap3: thm: nonexistence of solution}, that there is such a $\rho$ satisfying all the properties stated.
\begin{lemma}\label{chap3: lem: bound on d}
$d=\sup\big\{r~\big|~(r,\theta,z)\in \bf{Supp} \rho\big\}<\infty$.
\end{lemma}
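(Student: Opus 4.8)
The plan is to exploit the equilibrium relation \eqref{chap3: eq: equilibrium eq}, namely $A'(\rho) = B\rho + J + \Phi_K + \lambda$ where $\rho > 0$, and show that the right-hand side cannot grow unboundedly in $r$, which would force $A'(\rho)$ --- and hence $\rho$, since $A'$ is strictly increasing --- to stay bounded; but $J(r)$ grows, so something has to give unless the support is $r$-bounded. More precisely, suppose for contradiction that $d = \infty$, so there are points of $\mathbf{Supp}\,\rho$ with arbitrarily large $r$. The key tension is between three quantities: $B\rho$, which by Lemma \ref{chap3: lem: nonexist L infty bound on B rho} and the assumed boundedness of $\rho$ is uniformly bounded; $\Phi_K = B\rho_K$, which by \eqref{chap3: cond: Phi_K} decays at infinity and is in particular bounded; and $J(r) = \int_0^r s\Omega^2 \, ds$, which for constant $\Omega \geq 1$ equals $\tfrac12 \Omega^2 r^2 \to \infty$. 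The constant $\lambda$ is fixed once $\rho$ is given.

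First I would pin down that $\rho$ is genuinely bounded and has finite mass by hypothesis, so Lemma \ref{chap3: lem: nonexist L infty bound on B rho} applies to give $\|B\rho\|_\infty \leq C M^{2/3} \|\rho\|_\infty^{1/3} =: C_0 < \infty$. Next, I would use the ``no trapping'' condition together with the equilibrium relation evaluated at a point $\mathbf{x}_* = (r_*, \theta_*, z_*) \in \mathbf{Supp}\,\rho$ with $r_*$ large: since $\rho$ is continuous and positive near $\mathbf{x}_*$ (or at least $\rho(\mathbf{x}_*) \geq 0$ with the relation holding in the limit), we get
\begin{equation}
A'(\rho(\mathbf{x}_*)) = B\rho(\mathbf{x}_*) + \tfrac12 \Omega^2 r_*^2 + \Phi_K(\mathbf{x}_*) + \lambda \geq \tfrac12 \Omega^2 r_*^2 - C_0 - \|\Phi_K\|_\infty + \lambda.
\end{equation}
As $r_* \to \infty$ the right side $\to +\infty$, so $A'(\rho(\mathbf{x}_*)) \to \infty$, forcing $\rho(\mathbf{x}_*) \to \infty$ (using $A'$ increasing with $A'(s) \to \infty$, which follows from \eqref{chap3: cond: f 2}). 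This contradicts $\rho \in L^\infty$. Hence $d < \infty$.

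The main obstacle is the regularity/continuity subtlety: the equilibrium relation \eqref{chap3: eq: equilibrium eq} holds only where $\rho > 0$, so to extract the above inequality I must genuinely land on points where $\rho$ is positive, not merely in the closure of the support. Since $\mathbf{x}_*$ is in $\mathbf{Supp}\,\rho$, every neighborhood of it contains points with $\rho > 0$; picking such a point $\mathbf{x}_*'$ with $r(\mathbf{x}_*')$ still large and applying the relation there, then using continuity of $\rho$, $B\rho$, and $\Phi_K$ to pass to limits, handles this. One should also confirm $B\rho$ is finite and continuous --- this follows from $\rho \in L^\infty \cap L^1$ and standard potential estimates (e.g. the splitting into near and far parts as in Lemma \ref{chap3: lem: lower bound on mass in unit ball}). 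With these points addressed, the argument is just the growth-rate comparison sketched above, and it is genuinely short.
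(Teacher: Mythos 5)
Your proposal is correct and follows essentially the same route as the paper: evaluate the equilibrium relation \eqref{chap3: eq: equilibrium eq} where $\rho>0$, note that $J(r)=\tfrac12\Omega^2 r^2$ grows while $A'(\rho)$ stays bounded because $\rho$ is bounded by hypothesis and $A'$ is finite on bounded sets, and conclude $d<\infty$ (the paper simply drops $B\rho$ and $\Phi_K$ by nonnegativity rather than invoking the $L^\infty$ bound on $B\rho$, and the ``no trapping'' condition plays no role here). The only cosmetic difference is that the paper states the bound directly rather than by contradiction.
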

\begin{proof}
By \eqref{chap3: eq: equilibrium eq},
\begin{equation}
\frac{1}{2}r^2\leq \frac{1}{2}\Omega^2 r^2 \leq A'(\rho)-B\rho - \Phi_K -\lambda \leq A'(\rho)-\lambda.
\end{equation}
We know that $A'(s)=\int_0^s \frac{f(t)}{t^2}dt+\frac{f(s)}{s}$. It follows from \eqref{chap3: cond: f 1} and \eqref{chap3: cond: f 2} that $A'(\rho)\in L^{\infty}$ if $\rho$ is.
\end{proof}

By the expression of $A'(s)$ in the proof, we see that $A'(\rho)>0$ iff $\rho>0$.

\begin{lemma}\label{chap3: lem: size of lambda}
$\lambda \leq -\frac{1}{2}\Omega^2 d^2$.
\end{lemma}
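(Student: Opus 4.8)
The plan is to evaluate the equilibrium relation \eqref{chap3: eq: equilibrium eq} at cleverly chosen points of the support: points where $\rho$ is positive but very small, and whose cylindrical radius is arbitrarily close to $d$ (which is finite by Lemma \ref{chap3: lem: bound on d}). At such a point the centrifugal term $J(r)=\frac{1}{2}\Omega^2 r^2$ already accounts for essentially all of $\lambda$, since $A'(\rho)$ is small and $B\rho+\Phi_K\ge 0$.

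Two preliminary facts are needed. First, from $A(s)=s\int_0^s f(t)t^{-2}\,dt$ one gets $A'(s)=\int_0^s f(t)t^{-2}\,dt+f(s)/s$, and \eqref{chap3: cond: f 2} forces $\lim_{s\to 0^+}A'(s)=0$ (the same limit already invoked in the proof of Lemma \ref{chap3: lem: bound on d}); so $A'$ is continuous at $0$ with $A'(0)=0$. Second, for $\eta>0$ set $d_\eta=\sup\{r(\mathbf{x}):\rho(\mathbf{x})\ge\eta\}$. Since $\bigcup_{\eta>0}\{\rho\ge\eta\}=\{\rho>0\}$, since $r$ and $\rho$ are continuous, and since $d=\sup\{r(\mathbf{x}):\mathbf{x}\in\textbf{Supp}\,\rho\}=\sup\{r(\mathbf{x}):\rho(\mathbf{x})>0\}$ (because $\textbf{Supp}\,\rho=\overline{\{\rho>0\}}$), the monotone quantity $d_\eta$ increases to $d$ as $\eta\downarrow 0$.

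Now fix $\epsilon>0$, choose $\eta\in(0,\epsilon]$ with $d_\eta>d-\epsilon$ by the second fact, and pick a point $\mathbf{x}$ with $\rho(\mathbf{x})\ge\eta$ and $r(\mathbf{x})>d-\epsilon$. Since $\mathbf{x}\notin K$, the ``no trapping'' hypothesis puts the whole outward radial ray $\{\mathbf{x}+t(x_1,x_2,0):t\ge 0\}$ inside $\mathbb{R}^3\setminus K$, so $\rho$ is defined and continuous along it; along the ray $r=(1+t)\,r(\mathbf{x})$ increases to $\infty$, $\rho$ equals $\rho(\mathbf{x})\ge\eta$ at $t=0$, and $\rho$ vanishes once $r$ exceeds $d$ (points with $r>d$ lie off $\textbf{Supp}\,\rho$). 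Let $\mathbf{x}'$ be the first point of the ray with $\rho(\mathbf{x}')=\eta/2$; on the portion from $\mathbf{x}$ to $\mathbf{x}'$ one has $\rho>\eta/2>0$, so $\mathbf{x}'\in\{\rho>0\}\subseteq\{r\le d\}$, giving $r(\mathbf{x}')\le d$, while also $r(\mathbf{x}')\ge r(\mathbf{x})>d-\epsilon$. Evaluating \eqref{chap3: eq: equilibrium eq} at $\mathbf{x}'$ (legitimate since $\rho(\mathbf{x}')>0$) and using $B\rho(\mathbf{x}')\ge 0$, $\Phi_K(\mathbf{x}')\ge 0$, $J(r)=\frac{1}{2}\Omega^2 r^2$,
\begin{equation*}
\lambda=A'(\rho(\mathbf{x}'))-B\rho(\mathbf{x}')-\frac{1}{2}\Omega^2 r(\mathbf{x}')^2-\Phi_K(\mathbf{x}')\le A'(\eta/2)-\frac{1}{2}\Omega^2(d-\epsilon)^2.
\end{equation*}
Because $\eta\le\epsilon$ was allowed, $A'(\eta/2)\le\sup_{0<s\le\epsilon/2}A'(s)\to 0$ as $\epsilon\downarrow 0$ by the first fact; since $\lambda$ is independent of $\epsilon$, letting $\epsilon\downarrow 0$ yields $\lambda\le-\frac{1}{2}\Omega^2 d^2$.

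The one step that genuinely requires care is the inequality $r(\mathbf{x}')\le d$: this is exactly why the auxiliary point $\mathbf{x}'$ must be reached by travelling \emph{radially outward}, and why the no-trapping condition is needed — so that the ray never re-enters $K$ (where $\rho$ is undefined) and so that leaving $\{\rho>0\}$ can only occur at radius $\le d$. Everything else is routine bookkeeping with the definitions.
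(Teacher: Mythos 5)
Your proof is correct, and it takes a genuinely different route from the paper's. The paper also evaluates \eqref{chap3: eq: equilibrium eq} at support points with $r_n\to d$, but it must then show that $A'(\rho)$ tends to zero along such a sequence; it does this by contradiction, differentiating the equilibrium relation in the radial direction and invoking Rolle's theorem together with the gradient bounds of Lemma \ref{chap3: lem: nonexist L infty bound on B rho_r} and Lemma \ref{chap3: lem: Holder and Lip continuity of Phi_K} to show that $(A'(\rho))_r$ cannot blow up on the shrinking interval $(r_n,d)$. You sidestep that entire mechanism: instead of proving that $\rho$ must be small near the outer radius, you manufacture points where it is small by construction --- travelling radially outward from a point with $r>d-\epsilon$ until $\rho$ first drops to $\eta/2$, which the intermediate value theorem guarantees and which can only increase $r$. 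Both arguments lean on the no-trapping condition (the paper uses it so that $A'(\rho)(r,\theta_n,z_n)$ is defined on all of $[r_n,d]$ and Rolle applies; you use it to keep the outward ray inside $\mathbb{R}^3\setminus K$, where $\rho$ is defined and continuous). Your version is more elementary: it needs only continuity of $\rho$, the limit $A'(0^+)=0$, and nonnegativity of $B\rho$ and $\Phi_K$, and it avoids the regularity lemmas entirely at this stage (they are of course still needed later, e.g.\ in Lemma \ref{chap3: lem: nonexist uniform bound on d}). Two trivial housekeeping points: you should note $d>0$ so that the ray actually moves outward (this holds, since otherwise $\textbf{Supp}\,\rho$ would be a null set and $M=0$), and the inequality $r(\mathbf{x}')\le d$ that you flag as the delicate step is in fact never used --- only $r(\mathbf{x}')\ge r(\mathbf{x})>d-\epsilon$ enters the final estimate.
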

\begin{proof}
Pick a sequence $(r_n,\theta_n,z_n)$ such that $\rho(r_n,\theta_n,z_n)>0$, and $r_n\to d$. We claim that $A'(\rho)(r_n,\theta_n,z_n)\to 0$. If not, a subsequence will be bounded away from zero. Without loss of generality, we still call that subsequence $A'(\rho)(r_n,\theta_n, z_n)$. By the no trapping condition, $A'(\rho)(r,\theta_n,z_n)$ is defined for all $r>r_n$, in particular we have $A'(\rho)(d,\theta_n,z_n)=0$. By Rolle's theorem there is an $r_n^*$ between $r_n$ and $d$ such that $A'(\rho)(r_n^*,\theta_n, z_n)>0$ and $(A'(\rho))_r(r_n^*,\theta_n, z_n) \to -\infty$. By \eqref{chap3: eq: equilibrium eq} and the smoothing effect of $B$, $A'(\rho)$ is differentiable when positive. Differentiating \eqref{chap3: eq: equilibrium eq}, we get
\begin{equation}\label{chap3: eq: force balance in the r direction}
(A'(\rho))_r-\Omega^2 r -(B\rho)_r-(\Phi_c)_r=0.
\end{equation}
We see a contradiction if we evaluate this expression at $(r_n^*,\theta_n, z_n)$: the first term goes to $-\infty$ while the last three terms are bounded by lemma \ref{chap3: lem: bound on d}, lemma \ref{chap3: lem: nonexist L infty bound on B rho_r} and lemma \ref{chap3: lem: Holder and Lip continuity of Phi_K} respectively. Now evaluate \eqref{chap3: eq: equilibrium eq} at $(r_n,\theta_n,z_n)$. By the limit of $A'(\rho)(r_n,\theta_n,z_n)$ and the positivity of $B\rho$ and $\Phi_K$, we get the desired result.
\end{proof}

\begin{lemma}\label{chap3: lem: nonexistence l infinity bound on rho}
There is a constant $C_1>0$, depending on $\Phi_K$, $f$ and $M$, such that $\|\rho\|_{\infty}\leq C_1$. 
\end{lemma}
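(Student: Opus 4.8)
The plan is to use the upper bound on $\lambda$ from Lemma~\ref{chap3: lem: size of lambda} to cancel the centrifugal potential, thereby removing all dependence on $\Omega$ and reducing the statement to a standard $L^\infty$ estimate of rotating-star type.

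First I would note that on the support of $\rho$ one has $r\le d$ by Lemma~\ref{chap3: lem: bound on d}, so $J(r)=\frac{1}{2}\Omega^2 r^2\le\frac{1}{2}\Omega^2 d^2$ there, while Lemma~\ref{chap3: lem: size of lambda} gives $\lambda\le-\frac{1}{2}\Omega^2 d^2$; hence $J+\lambda\le 0$ on $\mathbf{Supp}\,\rho$, and \eqref{chap3: eq: equilibrium eq} gives, wherever $\rho>0$,
\begin{equation*}
A'(\rho)=B\rho+J+\Phi_K+\lambda\le B\rho+\Phi_K .
\end{equation*}
This is the crucial step: the right-hand side no longer involves the (possibly large) rotation rate. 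Next I would bound the two terms on the right. Since $\rho_K\in L^q(K)$ with $q>3>\frac{3}{2}$, Lemma~\ref{chap2: lem: L r bound on B rho} (equivalently, Lemma~\ref{chap3: lem: Holder and Lip continuity of Phi_K}) shows that $\Phi_K=B\rho_K$ is bounded, with $\|\Phi_K\|_\infty$ depending only on $\rho_K$; and Lemma~\ref{chap3: lem: nonexist L infty bound on B rho} gives $\|B\rho\|_\infty\le CM^{2/3}\|\rho\|_\infty^{1/3}$. Writing $S=\|\rho\|_\infty>0$ (positivity since $\int\rho=M>0$) and evaluating the displayed inequality along a sequence $\mathbf{x}_n$ with $\rho(\mathbf{x}_n)\to S$, continuity of $A'$ on $(0,\infty)$ would yield
\begin{equation*}
A'(S)\le CM^{2/3}S^{1/3}+\|\Phi_K\|_\infty .
\end{equation*}

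Finally I would invoke the super-$\frac{4}{3}$ growth in \eqref{chap3: cond: f cond nonexistence}: since $A'(s)=\int_0^s f(t)t^{-2}\,dt+\frac{f(s)}{s}\ge\frac{f(s)}{s}$, there are $c>0$ and $s_0>0$ with $A'(s)\ge c\,s^{\gamma-1}$ for $s\ge s_0$, where $\gamma-1>\frac{1}{3}$. If $S\le s_0$ there is nothing to prove; otherwise $c\,S^{\gamma-1}\le CM^{2/3}S^{1/3}+\|\Phi_K\|_\infty$, and since $\gamma-1>\frac{1}{3}$ this forces $S\le C_1$ with $C_1$ depending only on $f$, $M$, $\Phi_K$. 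I expect the only delicate point to be the bookkeeping needed to confirm that every constant generated in these steps is genuinely independent of $\Omega$; this is exactly what the cancellation $J+\lambda\le 0$ secures, so beyond that there is no real obstacle.
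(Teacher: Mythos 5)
Your proposal is correct and follows essentially the same route as the paper: cancel $J+\lambda\le 0$ on the support using Lemma \ref{chap3: lem: size of lambda}, bound $B\rho$ via Lemma \ref{chap3: lem: nonexist L infty bound on B rho}, and close the estimate with the growth condition \eqref{chap3: cond: f cond nonexistence} together with $\gamma-1>\frac{1}{3}$. The only cosmetic difference is that you pass to the supremum via a sequence and continuity of $A'$, while the paper absorbs the small-density case by adding a constant $C^{\gamma}$; both are fine.
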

\begin{proof}
By lemma \ref{chap3: lem: size of lambda}, 
\begin{equation}
A'(\rho)\leq B\rho +\Phi_K.
\end{equation}
By \eqref{chap3: cond: f cond nonexistence}, there is a $C>0$ such that if $s>C$, 
\begin{equation}
Cs ^{\gamma-1}\leq A'(s).
\end{equation}
Hence either $\rho<C$ or $C\rho^{\gamma -1}\leq B\rho +\Phi_K$. Therefore
\begin{equation}
C\|\rho\|_{\infty}^{\gamma -1}\leq C^{\gamma}+\|\Phi_K\|_{\infty}+CM^{\frac{2}{3}}\|\rho\|_{\infty}^{\frac{1}{3}}.
\end{equation}
The last term follows from lemma \ref{chap3: lem: nonexist L infty bound on B rho}. Here we have taken the liberty of using the same constant $C$. Now take $\epsilon>0$ so small that $\epsilon M^{\frac{2}{3}}<\frac{1}{2}$. Since $\gamma-1>\frac{1}{3}$, we have 
\begin{equation}
\|\rho\|_{\infty}^{\frac{1}{3}}\leq \epsilon \|\rho\|_{\infty}^{\gamma -1} + C(\epsilon).
\end{equation}
It follows that
\begin{align*}
C\|\rho\|_{\infty}^{\gamma -1} &\leq C^{\gamma}+\|\Phi_K\|_{\infty}+\frac{1}{2}C\|\rho\|_{\infty}^{\gamma -1} + C(M)\\
\frac{1}{2}C\|\rho\|_{\infty}^{\gamma -1} &\leq C^{\gamma}+\|\Phi_K\|_{\infty} + C(M).
\end{align*}
The assertion now follows from the fact that $\Phi_K \in L^{\infty}(\mathbb{R}^3)$.
\end{proof}

\begin{lemma}\label{chap3: lem: nonexist uniform bound on d}
There is an $\Omega_1>0$ and $0<d_0<\frac{1}{4}$ such that if $\Omega>\Omega_1$, then $d<d_0$.
\end{lemma}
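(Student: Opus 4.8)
The plan is to play the uniform $L^{\infty}$ control already obtained against the two-sided information on the Lagrange multiplier $\lambda$. The upper bound $\lambda\le -\tfrac12\Omega^2 d^2$ is Lemma \ref{chap3: lem: size of lambda}; I will produce a matching \emph{lower} bound for $\lambda$ that does not degrade as $\Omega$ grows, and then read off that $d$ must be small.

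First I would record the $\Omega$-independent constants. By Lemma \ref{chap3: lem: nonexistence l infinity bound on rho} there is a $C_1>0$, depending only on $\Phi_K$, $f$, $M$ (and in particular not on $\Omega\ge 1$), with $\|\rho\|_{\infty}\le C_1$. Lemma \ref{chap3: lem: nonexist L infty bound on B rho} then gives $\|B\rho\|_{\infty}\le C M^{2/3}C_1^{1/3}=:C_2$, still independent of $\Omega$, and since $\Phi_K\in C^1(\mathbb{R}^3)$ decays at infinity it is bounded, $\|\Phi_K\|_{\infty}=:C_3$.

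Next, because $\int\rho=M>0$ there is a point $\mathbf{x}_*$ with $\rho(\mathbf{x}_*)>0$. At such a point the equilibrium relation \eqref{chap3: eq: equilibrium eq} holds and $A'(\rho(\mathbf{x}_*))>0$ (the remark after Lemma \ref{chap3: lem: bound on d}: $A'(s)>0$ iff $s>0$), so
\begin{equation*}
0<A'(\rho(\mathbf{x}_*))=\lambda+B\rho(\mathbf{x}_*)+\Phi_K(\mathbf{x}_*)\le \lambda+C_2+C_3,
\end{equation*}
whence $\lambda>-(C_2+C_3)$. Combining this with Lemma \ref{chap3: lem: size of lambda} yields $-\tfrac12\Omega^2 d^2\ge\lambda>-(C_2+C_3)$, i.e.\ $d^2<\dfrac{2(C_2+C_3)}{\Omega^2}$.

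Finally I would simply fix $d_0=\tfrac18$ and $\Omega_1=\max\{1,\,8\sqrt{2(C_2+C_3)}\}$; then for $\Omega>\Omega_1$ we get $d<\tfrac18<\tfrac14$, as claimed, with $\Omega_1$ and $d_0$ depending only on $\Phi_K$, $f$, $M$. There is no real obstacle here beyond assembling the prior lemmas; the only step worth a sentence is the existence of a point where $\rho>0$, which is immediate from $M>0$, ensuring the pointwise identity \eqref{chap3: eq: equilibrium eq} can be invoked to bound $\lambda$ from below.
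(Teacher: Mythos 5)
There is a genuine gap: when you derive the lower bound on $\lambda$ you drop the centrifugal potential $J$ from the equilibrium relation. Equation \eqref{chap3: eq: equilibrium eq} reads $A'(\rho)-B\rho-J-\Phi_K=\lambda$, so at your point $\mathbf{x}_*$ with $\rho(\mathbf{x}_*)>0$ the correct identity is
\begin{equation*}
0<A'(\rho(\mathbf{x}_*))=\lambda+B\rho(\mathbf{x}_*)+J\big(r(\mathbf{x}_*)\big)+\Phi_K(\mathbf{x}_*)\le \lambda+C_2+C_3+\tfrac12\Omega^2 r(\mathbf{x}_*)^2,
\end{equation*}
which only yields $\lambda>-(C_2+C_3)-\tfrac12\Omega^2 r(\mathbf{x}_*)^2$. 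Since all you know a priori is $r(\mathbf{x}_*)\le d$, this is $\lambda>-(C_2+C_3)-\tfrac12\Omega^2 d^2$, and combining it with Lemma \ref{chap3: lem: size of lambda} gives only $0>-(C_2+C_3)$ --- a tautology. The term you omitted degrades at exactly the rate $\tfrac12\Omega^2 d^2$ that you are trying to beat. To rescue your scheme you would need a positivity point with $r(\mathbf{x}_*)$ small (or at least bounded away from $d$ by a definite factor), but nothing guarantees this: for fast rotation one expects the mass to concentrate near the outer radius, so every point of the support may have $r$ close to $d$. Indeed, Lemma \ref{chap3: lem: size of lambda} itself is proved precisely by sending $r_n\to d$ and exploiting $-J(r_n)\to -\tfrac12\Omega^2 d^2$; your proposed lower bound is the same computation run in reverse, so the two cancel.

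The paper avoids this by working with the radial derivative of the relation, \eqref{chap3: eq: force balance in the r direction}, rather than the relation itself. At a point $r^*>d/2$ where $(A'(\rho))_r\le 0$ one gets $\Omega^2 d/2\le (B\rho)_r+(\Phi_K)_r$, and the crucial feature is that these derivative terms are not merely bounded but \emph{small with} $d$: $|(B\rho)_r|\le C\|\rho\|_{\infty}d$ by Lemma \ref{chap3: lem: nonexist L infty bound on B rho_r} (the density lives in a cylinder of radius $d$), and $|(\Phi_K)_r|\le Cd^{3/q}$ since $(\Phi_K)_r$ vanishes on the axis and is H\"older by Lemma \ref{chap3: lem: Holder and Lip continuity of Phi_K}. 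That is what produces $(\tfrac{\Omega^2}{2}-\tilde C)d^{1-3/q}\le\tilde C$ and forces $d\to 0$; this inequality is also reused later in the proof of Theorem \ref{chap3: thm: nonexistence of solution}, so even a repaired version of your argument would leave that step unsupported.
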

\begin{proof}
Pick an $(r,\theta,z)$ such that $\rho(r,\theta,z)>0$, $r>\frac{d}{2}$. Then there is an $r^*$ between $r$ and $d$ such that $(A'(\rho))_r(r^*, \theta, z)\leq 0$. Evaluating \eqref{chap3: eq: force balance in the r direction} at this point, we have
\begin{align}
& \Omega^2\frac{d}{2}\leq \Omega^2 r^* \notag \\
 \leq &(B\rho)_r(r^*,\theta,z) + (\Phi_K)_r(r^*,\theta,z).\notag
\end{align}
The first term above is bounded by $2CC_1d$ by lemma \ref{chap3: lem: nonexist L infty bound on B rho_r} and \ref{chap3: lem: nonexistence l infinity bound on rho}. Noticing $(\Phi_K)_r(0,\theta,z)=0$ by axisymmetry, the second term above is therefore bounded by $Cd^{\frac{3}{q}}$ by lemma \ref{chap3: lem: Holder and Lip continuity of Phi_K}. Now we have
\begin{align}
\Omega^2 \frac{d}{2} \leq \tilde{C}(d+d^{\frac{3}{q}})\notag \\
(\frac{\Omega^2}{2}-\tilde{C})d^{1-\frac{3}{q}}  \leq \tilde{C}, \label{chap3: ineq: nonexist crucial ineq}
\end{align}
and the assertion follows.
\end{proof}

\begin{lemma}\label{chap3: lem: nonexist Holder continuity of rho on B}
$\rho\in C^{0,\alpha}(\bar{S})$ for some $0<\alpha<1$, where $S$ is any ball of radius $\frac{1}{2}$ whose center is on $(\mathbb{R}^3\setminus Z) \cap x_3$-axis, and we have $\|\rho\|_{C^{0,\alpha}(\bar{S})}\leq C_2$. Here $C_2$ is a constant depending on $\Phi_K$ $f$ and $M$, and $Z$ is the region given in lemma \ref{chap3: lem: Holder and Lip continuity of Phi_K}.
\end{lemma}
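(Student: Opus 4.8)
The plan is to solve the equilibrium relation \eqref{chap3: eq: equilibrium eq} for $\rho$ where it is positive, writing $\rho=(A')^{-1}(g)$ with $g:=B\rho+\tfrac12\Omega^{2}r^{2}+\Phi_{K}+\lambda$, and to obtain the H\"older bound from two ingredients: a Lipschitz estimate for $g$ on the thin cylinder carrying the support of $\rho$ that is \emph{uniform in} $\Omega$, and H\"older continuity of $(A')^{-1}$ near the origin. Throughout one works on the segment joining two points of $\bar S$; since $r(\mathbf x)=\sqrt{x_1^2+x_2^2}$ is a convex function, such a segment stays inside $\{r\le d\}$ whenever its endpoints do, and by lemma \ref{chap3: lem: bound on d} the support of $\rho$ lies in $\{r\le d\}$.

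The first step is the claim that $|\nabla g|\le L_{0}$ on $\{r\le d\}$, with $L_{0}$ depending only on $f$, $M$, $\Phi_{K}$ and \emph{not} on $\Omega$ once $\Omega>\Omega_{1}$. Indeed, by lemma \ref{chap3: lem: nonexistence l infinity bound on rho} and $\int\rho=M$ one has $|\nabla B\rho(\mathbf x)|\le\|\rho\|_{\infty}\int_{|\mathbf z|<1}|\mathbf z|^{-2}\,d\mathbf z+M\le 4\pi C_{1}+M$; differentiating under the integral sign as in lemma \ref{chap3: lem: Holder and Lip continuity of Phi_K} and using $q>3$ (so that $|\mathbf z|^{-2}\in L^{q'}_{\mathrm{loc}}$) gives $|\nabla\Phi_{K}|\le C\|\rho_{K}\|_{q}$ everywhere; and on $\{r\le d\}$ the rotational term contributes $|\nabla(\tfrac12\Omega^{2}r^{2})|=\Omega^{2}r\le\Omega^{2}d$, which by \eqref{chap3: ineq: nonexist crucial ineq} in lemma \ref{chap3: lem: nonexist uniform bound on d} is bounded (in fact tends to $0$) as $\Omega\to\infty$, hence $\le 1$ for $\Omega$ large. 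Since $\lambda$ cancels in any difference, $|g(\mathbf x)-g(\mathbf x')|\le L_{0}|\mathbf x-\mathbf x'|$ for $\mathbf x,\mathbf x'$ joined by a segment inside $\{r\le d\}$.

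The second step records that $(A')^{-1}$, extended by $0$ on $(-\infty,0]$, is H\"older continuous of some exponent $\alpha\in(0,1)$ on $[0,A'(C_{1})]$ with a constant $C_{h}$ depending only on $f$ and $C_{1}$: by \eqref{chap3: cond: f 2} one has $A'(0)=0$ and $A'(s)>0$ for $s>0$, while \eqref{chap3: cond: f continuous differentiability}, $\liminf_{s\to0}f'(s)s^{-\mu}>0$, yields $A''(s)=f'(s)/s\ge c\,s^{\mu-1}$ and hence $A'(s)\ge c'\,s^{\mu}$ for small $s$, so that $(A')^{-1}(\tau)\le C\tau^{1/\mu}$ near $0$; away from $0$, $A'$ is a $C^{1}$ strictly increasing function and the inverse is locally Lipschitz (for the model case $f(s)=s^{\gamma}$ one may simply take $\alpha=1/\gamma$). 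The estimate then follows from a three-case analysis for $\mathbf x,\mathbf x'\in\bar S$. If $\rho(\mathbf x)=\rho(\mathbf x')=0$ there is nothing to prove. If $\rho(\mathbf x),\rho(\mathbf x')>0$, both points lie in the support and hence in $\{r\le d\}$, the joining segment stays there, $A'(\rho)=g\in(0,A'(C_{1})]$ at each, and $|\rho(\mathbf x)-\rho(\mathbf x')|\le C_{h}|g(\mathbf x)-g(\mathbf x')|^{\alpha}\le C_{h}L_{0}^{\alpha}|\mathbf x-\mathbf x'|^{\alpha}$. If $\rho(\mathbf x)>0=\rho(\mathbf x')$, let $\tilde{\mathbf x}$ be the point of $\{\rho=0\}$ on the segment $[\mathbf x,\mathbf x']$ nearest $\mathbf x$; then $\rho>0$ on $(\tilde{\mathbf x},\mathbf x]$, which lies in $\{r\le d\}$, and by continuity of $\rho$ and of $g$ together with $A'(\rho)=g$ on that half-segment, $g(\tilde{\mathbf x})=A'(\rho(\tilde{\mathbf x}))=A'(0)=0$; hence $\rho(\mathbf x)=(A')^{-1}(g(\mathbf x))-(A')^{-1}(g(\tilde{\mathbf x}))$ and $\rho(\mathbf x)\le C_{h}L_{0}^{\alpha}|\mathbf x-\tilde{\mathbf x}|^{\alpha}\le C_{h}L_{0}^{\alpha}|\mathbf x-\mathbf x'|^{\alpha}$. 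Combined with $\|\rho\|_{L^{\infty}(\bar S)}\le C_{1}$ this gives $\|\rho\|_{C^{0,\alpha}(\bar S)}\le C_{2}$ with $C_{2}$ depending only on $\Phi_{K}$, $f$, $M$.

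I expect the main obstacle to be the uniformity in $\Omega$ in the first step: the rotational potential $\tfrac12\Omega^{2}r^{2}$ has gradient of size $\Omega^{2}r$, which on a ball of radius $\tfrac12$ would reach $\tfrac12\Omega^{2}$ and destroy the bound, so the argument genuinely needs the smallness of the cylindrical radius $d$ of the support — hence of $\Omega^{2}d$ — supplied by lemma \ref{chap3: lem: nonexist uniform bound on d}. The other delicate point, handled in the third case, is gluing the formula $\rho=(A')^{-1}(g)$ across the free boundary $\partial\{\rho>0\}$; this relies on the hypothesis that $\rho$ is \emph{continuous}, which forces $g$ to vanish on that boundary.
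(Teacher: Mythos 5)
Your argument is correct and is, at its core, the paper's own proof: the paper likewise shows that $A'(\rho)$ (your $g$ on the support) is uniformly Lipschitz on $S$, and reduces the H\"older bound to the reverse inequality $A'(t)-A'(s)\geq \tilde{C}(t-s)^{1/\alpha}$ for $0\leq s<t\leq C_1$, which is precisely the two-point H\"older estimate for $(A')^{-1}$ that you invoke. You are in fact more explicit than the paper on the two points it glosses over: the uniformity in $\Omega$ of the Lipschitz constant, which as you correctly note forces one to work on $\{r\leq d\}$ and use $\Omega^2 d\to 0$ from \eqref{chap3: ineq: nonexist crucial ineq} rather than the crude bound $\Omega^2 r\leq \Omega^2/2$ on $S$; and the gluing across $\partial\{\rho>0\}$ via the nearest zero of $\rho$ on the segment, which is the content of the paper's terse ``Lipschitz on $S\cap\{\rho>0\}$ and continuous on $S$, hence Lipschitz on $S$.'' The one step you should tighten is your second ingredient: the pointwise bound $(A')^{-1}(\tau)\leq C\tau^{1/\mu}$ near $0$ together with local Lipschitz continuity away from $0$ does not by itself give a uniform two-point H\"older estimate on $[0,A'(C_1)]$ (an increasing continuous function dominated by $\tau^{1/\mu}$ can still have plateaus followed by steep rises). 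What does give it is the lower bound $A''(s)=f'(s)/s\geq cs^{\mu-1}$ that you already wrote down: integrate it directly, as the paper does, to get $A'(t)-A'(s)\geq \tilde{C}\big((t+s)^{\mu}-s^{\mu}\big)\geq \tilde{C}(t-s)^{\mu}$ when $\mu\geq 1$ and $\geq \tilde{C}\mu C_1^{\mu-1}(t-s)$ when $\mu<1$; equivalently, bound $((A')^{-1})'(\tau)\leq C\tau^{1/\mu-1}$ and integrate, using subadditivity of $\tau\mapsto\tau^{1/\mu}$. (Note that both your ``locally Lipschitz away from $0$'' and the paper's ``$f'(s)\geq Cs^{\mu}$ on $[0,C_1]$'' quietly use $f'>0$ on all of $(0,C_1]$, not just the liminf condition \eqref{chap3: cond: f continuous differentiability} at the origin; this is harmless for the model case $f(s)=s^{\gamma}$.)
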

\begin{proof}
We first observe that since $A''(s)=\frac{f'(s)}{s}$, $A'(s)$ is strictly increasing. Also notice that $A'(0)=0$. By \eqref{chap3: eq: equilibrium eq}, $A'(\rho)$ is uniformly Lipschitz continuous on $S\cap \{\rho>0\}$ and continuous on $S$, hence is uniformly Lipschitz continuous on $S$. It is sufficient to prove 
\begin{equation*}
|\rho(\mathbf{x})-\rho(\mathbf{y})|\leq \tilde{C_2}|A'(\rho(\mathbf{x}))-A'(\rho(\mathbf{y}))|^{\alpha},
\end{equation*}
or
\begin{equation}
A'(t)-A'(s)\geq \tilde{C_2}(t-s)^{\frac{1}{\alpha}}
\end{equation}
for $0\leq s<t \leq \|\rho\|_{\infty}\leq C_1$. By \eqref{chap3: cond: f continuous differentiability} there exists a $C>0$ such that $f'(s)\geq Cs^{\mu}$ for $0\leq s \leq \|\rho\|_{\infty}\leq C_1$. Now let $u=t-s$, 
\begin{align}
& (A'(t)-A'(s))(t-s)^{-\frac{1}{\alpha}} \notag \\
= & (A'(s+u)-A'(s))u^{-\frac{1}{\alpha}} \notag \\
= & u^{-\frac{1}{\alpha}} \int_s^{s+u}A''(\xi)d\xi \notag \\
= & u^{-\frac{1}{\alpha}} \int_s^{s+u}\frac{f'(\xi)}{\xi}d\xi \notag \\
\geq & u^{-\frac{1}{\alpha}} \int_s^{s+u}\frac{C\xi ^{\mu}}{\xi}d\xi \notag \\
= &\tilde{C}u^{-\frac{1}{\alpha}}((s+u)^{\mu}-s^{\mu}) \label{chap3: eq: lower bound on (A'(s+u)-A'(s))u^ 1/alpha imdt step}
\end{align}
If $\mu \geq 1$, \eqref{chap3: eq: lower bound on (A'(s+u)-A'(s))u^ 1/alpha imdt step} is equal to
\begin{align}
& \tilde{C} \bigg[\bigg(1+\frac{s}{u}\bigg)^{\mu}-\bigg(\frac{s}{u}\bigg)^{\mu}\bigg]u^{\mu-\frac{1}{\alpha}} \notag \\
\geq &\tilde{C} u^{\mu-\frac{1}{\alpha}} \notag \\
\geq &\tilde{C} C_1^{\mu-\frac{1}{\alpha}}\geq \tilde{C_2}>0. \notag
\end{align}
The last step is correct if we choose $\alpha < \frac{1}{\mu}$. On the other hand if $0<\mu<1$, \eqref{chap3: eq: lower bound on (A'(s+u)-A'(s))u^ 1/alpha imdt step} is equal to
$\tilde{C}\mu \xi ^{\mu-1} u^{1-\frac{1}{\alpha}}$, where $\xi$ is between $s$ and $s+u$. This in turn is greater than or equal to 
\begin{align}
\tilde{C}\mu C_1^{\mu -1} u^{1-\frac{1}{\alpha}} \geq \tilde{C}\mu C_1^{\mu -1} C_1^{1-\frac{1}{\alpha}} \geq \tilde{C_2}>0\notag 
\end{align}
if we choose an $\alpha<1$.
\end{proof}

\begin{lemma}\label{chap3: lem: nonexist bound on supp rho in z direction}
There is an $\Omega_2>0$ such that if $\Omega>\Omega_2$, $\rho_{\mathbb{R}^3\setminus Z} \equiv 0$. Here $Z$ is the region given in lemma \ref{chap3: lem: Holder and Lip continuity of Phi_K}.
\end{lemma}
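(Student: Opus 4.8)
The plan is to show that the density $\rho$ must vanish outside the slab $Z$ once $\Omega$ is large. The strategy mirrors the support-control arguments used for the $r$-direction in Lemma~\ref{chap3: lem: nonexist uniform bound on d}, but now we exploit the monotonicity hypothesis \eqref{chap3: cond: Phi_K 2}: for $|x_3|>z_0$, $\Phi_K$ is non-increasing as $|x_3|$ increases. Combined with Lemma~\ref{chap3: lem: size of lambda}, which gives $\lambda\leq -\tfrac12\Omega^2 d^2$, and Lemma~\ref{chap3: lem: nonexist uniform bound on d}, which forces the support into the thin cylinder $r<d_0<\tfrac14$, the idea is that at large $|x_3|$ the positive driving terms $B\rho$ and $\Phi_K$ in \eqref{chap3: eq: equilibrium eq} become too small to balance $A'(\rho)-\lambda$, which is at least $-\lambda\geq \tfrac12\Omega^2 d^2\geq 0$; actually more usefully, $A'(\rho)\geq 0$ and $-\lambda\geq 0$, so we just need $B\rho(\mathbf{x})+\Phi_K(\mathbf{x})<-\lambda$ to be violated — wait, the inequality goes the other way. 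Let me restate: where $\rho>0$ we have $A'(\rho) = \lambda + B\rho + \tfrac12\Omega^2 r^2 + \Phi_K \leq B\rho + \Phi_K + \tfrac12\Omega^2 d^2 + \lambda$, but since $\lambda \leq -\tfrac12\Omega^2 d^2$ this gives $A'(\rho)\leq B\rho+\Phi_K$. Since $A'(\rho)>0$ wherever $\rho>0$, it suffices to show $B\rho(\mathbf{x})+\Phi_K(\mathbf{x})$ can be made $\leq 0$ — impossible since both are positive — so instead we must squeeze harder.

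The correct route: use the refined balance in the $z$-direction. Differentiating \eqref{chap3: eq: equilibrium eq} with respect to $x_3$ yields $(A'(\rho))_{x_3} = (B\rho)_{x_3} + (\Phi_K)_{x_3}$ wherever $\rho>0$. First I would establish, as in Lemma~\ref{chap3: lem: nonexist L infty bound on B rho_r}, a bound $|(B\rho)_{x_3}(\mathbf{x})|\leq C\|\rho\|_\infty \, d_0$ using that the support lies in a cylinder of radius $d_0$ (the $z$-integration is controlled because the support in $z$, while a priori unbounded, contributes a convergent integral for the derivative kernel; more carefully, one integrates the kernel $(x_3-x_3')/|\mathbf{x}-\mathbf{x}'|^3$ over the thin infinite cylinder, which is finite and $O(d_0)$). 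By Lemma~\ref{chap3: lem: nonexistence l infinity bound on rho}, $\|\rho\|_\infty\leq C_1$, so $|(B\rho)_{x_3}|\leq C d_0$, a bound independent of $\Omega$. For $(\Phi_K)_{x_3}$, when $|x_3|>z_0$ the monotonicity \eqref{chap3: cond: Phi_K 2} gives that $(\Phi_K)_{x_3}$ has sign opposite to $x_3$, and by the $C^{1,1}$ estimate of Lemma~\ref{chap3: lem: Holder and Lip continuity of Phi_K} on $\mathbb{R}^3\setminus Z$ it is bounded. Then I would argue: suppose $\rho(r_0,\theta_0,z_0')>0$ for some $z_0'$ with, say, $z_0' > l$ (the boundary of $Z$). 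Move upward in $x_3$ along the half-line (which stays in $\mathbb{R}^3\setminus K$), find a point $z^*>z_0'$ where $(A'(\rho))_{x_3}$ is $\leq 0$ at the ``upper edge'' of the support — but $A'(\rho)$ drops from a positive value to $0$, so by Rolle/mean value there is a point where $(A'(\rho))_{x_3}$ is very negative if the support is thin in $z$; conversely if the support is thick in $z$ we get a large mass contribution forcing $d$ small via a different route.

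The cleaner argument, which I expect is the intended one: use Lemma~\ref{chap3: lem: nonexist Holder continuity of rho on B}, which gives uniform Hölder control $\|\rho\|_{C^{0,\alpha}(\bar S)}\leq C_2$ on every half-unit ball centered on the $x_3$-axis outside $Z$. This means $\rho$ cannot oscillate wildly. Now integrate the equilibrium relation, or rather exploit that on such a ball $S$, $A'(\rho)\leq B\rho + \Phi_K$ with $B\rho$ small: by Lemma~\ref{chap3: lem: nonexist L infty bound on B rho} and the thin support, plus a covering argument in $z$ as in Lemma~\ref{chap3: lem: pre z bound on the support of rho}, $B\rho(\mathbf{x})$ is small at points far out in $z$ relative to the mass in a local slab; combined with $\Phi_K\to 0$ and $\Phi_K$ non-increasing in $|x_3|$, one gets $B\rho+\Phi_K < A'(\eta)$ for the smallest $\eta>0$ with — here one needs $A'$ to not be degenerate near $0$, but \eqref{chap3: cond: f continuous differentiability} only gives that via $f'(s)\geq Cs^\mu$; actually $A'(s)>0$ for all $s>0$ and $A'$ is continuous and increasing. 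The key inequality: if $\rho$ vanishes somewhere on $\bar S$ (which it does, since $S$ has radius $\tfrac12 > d_0$ and the support has radius $<d_0$, so $S$ contains points off the support), then by Hölder continuity $\rho\leq C_2 (\text{dist})^\alpha$ near the support boundary, and then the standard argument: at an interior point of the support within $S$, $A'(\rho) = B\rho + \Phi_K + \lambda + \tfrac12\Omega^2 r^2$; we use $\lambda \leq -\tfrac12\Omega^2 d^2 \leq -\tfrac12\Omega^2 r^2$ to kill the rotation term, leaving $A'(\rho)\leq B\rho+\Phi_K$, and then we must show the right side is smaller than $A'$ of the maximum of $\rho$ on $S$ — contradiction if $\max_S\rho>0$. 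This last step needs $\max_S\rho$ controlled below by the Hölder norm and the fact that $\rho$ achieves a genuine positive max; the contradiction comes from choosing the half-unit ball $S$ far enough in $z$ that $\Phi_K(\mathbf{x})<\tfrac12 A'(\sigma)$ for a threshold $\sigma$, and using the covering-in-$z$ argument (mass is finite, so slabs with substantial mass are finitely many, hence beyond some $z$-height all slabs are ``light'', making $B\rho$ small there too).

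The main obstacle I anticipate is handling the a priori unbounded vertical extent of the support when estimating $B\rho$ and $(B\rho)_{x_3}$ far out in $z$: unlike the $r$-direction where Lemma~\ref{chap3: lem: bound on d} gave a clean bound, here we do not yet know $\rho$ has bounded $z$-support — that is precisely what we are proving. The resolution is the finite-mass covering argument: partition $\mathbb{R}^3\setminus Z$ into unit slabs in $z$; since $\int\rho = M$, only finitely many slabs carry mass exceeding any fixed $\delta>0$, and these ``heavy'' slabs, together with the monotonicity of $\Phi_K$ in $|x_3|$ (which via an energy-comparison ``sliding'' argument as in Lemma~\ref{chap3: lem: z bound on the support of rho} — though here it is a pointwise equilibrium argument, not a variational one — prevents gaps), must be contiguous. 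So the support's $z$-extent is bounded by (number of heavy slabs) times (slab width) plus $z_0$ plus a constant. Once that bound is in hand, $B\rho$ is estimated exactly as in the earlier lemmas, $\Phi_K$ is uniformly small there, the rotation term is killed by $\lambda$, and one concludes $A'(\rho)\leq B\rho+\Phi_K < $ threshold, forcing $\rho\equiv 0$ outside $Z$ for $\Omega>\Omega_2$. I would present the details by first reducing to a finite $z$-extent via mass counting, then running the pointwise comparison verbatim from Lemma~\ref{chap3: lem: r bound on the support of rho} and Lemma~\ref{chap3: lem: pre z bound on the support of rho} with the roles of $r$ and $z$ exchanged.
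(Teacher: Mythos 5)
Your proposal does not reach a working proof, and it diverges from the paper's argument in a way that matters. The paper does \emph{not} first bound the $z$-extent of the support; instead it works at each fixed height $z$ outside $Z$ and runs a purely \emph{horizontal} derivative comparison. Concretely: it first shows $\|B\rho\|_{C^{1,1}(\bar S_1)}$ is uniformly bounded on quarter-unit balls centered on the axis outside $Z$ (splitting $B\rho=B(\rho\chi_S)+B(\rho\chi_{\mathbb{R}^3\setminus S})$, using Lemma \ref{chap3: lem: nonexist Holder continuity of rho on B} plus Schauder for the near part and differentiation under the integral for the far part — this needs no control on where the mass sits in $z$). Then, assuming $\rho(x^{\ast},0,z)$ is at the outer edge of the positivity set on a horizontal ray at height $z\notin Z$, it gets $\Omega^2x^{\ast}\le -(B\rho)_x-(\Phi_K)_x$ there; if $(A'(\rho))_x\ge 0$ at any interior point $x_0$ of the positivity set, subtracting the two relations and using the $C^{1,1}$ bounds gives $\Omega^2(x^{\ast}-x_0)\le 2C(x^{\ast}-x_0)$, impossible for $\Omega^2>2C$. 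Hence $(A'(\rho))_x<0$ throughout, positivity propagates all the way to the axis, and the same argument in the $-x$ direction forces $(A'(\rho))_x(0,0,z)$ to be simultaneously negative and positive. None of this mechanism appears in your proposal.

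The two concrete gaps in your route are these. First, your reduction to finite $z$-extent leans on the contiguity-of-heavy-slabs ``sliding'' argument of Lemma \ref{chap3: lem: z bound on the support of rho}, which is variational: it compares $E(\rho_R)$ with $E(\rho_R')$ and uses minimality. In Theorem \ref{chap3: thm: nonexistence of solution} the function $\rho$ is only assumed to satisfy the pointwise relation \eqref{chap3: eq: equilibrium eq}; you flag this (``not a variational one'') but offer no substitute, so the contiguity step is unsupported. Second, even granting bounded $z$-extent, your endgame ``$A'(\rho)\le B\rho+\Phi_K<\text{threshold}$ forces $\rho\equiv 0$'' has no threshold to compare against. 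In the existence sections the positive lower bound $B\rho_R+\Phi_K\ge e_2/2$ on the support came from the uniform bound $\lambda_R\le -e_2$, which is again a consequence of minimization ($I_R\le -e_1$). Here the equilibrium relation only yields $B\rho+\Phi_K=A'(\rho)-\lambda-\tfrac12\Omega^2r^2\ge\tfrac12\Omega^2(d^2-r^2)$, which degenerates to $0$ as $r\to d$ and gives no uniform positive floor; since $A'(0)=0$ and $\rho$ can be arbitrarily small near the boundary of its support, making $B\rho+\Phi_K$ small at large $|z|$ produces no contradiction. You in fact notice this obstruction early on (``impossible since both are positive — so instead we must squeeze harder'') but the squeeze you then propose never materializes into an inequality that closes. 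The ingredients you cite (Lemmas \ref{chap3: lem: nonexist Holder continuity of rho on B}, \ref{chap3: lem: Holder and Lip continuity of Phi_K}, differentiating \eqref{chap3: eq: equilibrium eq}) are the right ones, but the decisive step — that $(A'(\rho))_x$ cannot change sign along a horizontal ray outside $Z$ when $\Omega$ is large, together with the sign contradiction at the axis — is missing.
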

\begin{proof}
We first show that $\|B\rho\|_{C^{1,1}(\bar{S_1})}$ is uniformly bounded, where $S_1$ is any ball of radius $\frac{1}{4}$ whose center is on $(\mathbb{R}^3\setminus Z) \cap x_3$-axis. Let $S$ be a ball concentric with $S_1$ of radius $\frac{1}{2}$, then $B\rho= B(\rho\chi_{S})+B(\rho\chi_{\mathbb{R}^3 \setminus S})$. The first term is bounded in $C^{2,\alpha}(\bar{S_1})$ by lemma \ref{chap3: lem: nonexist Holder continuity of rho on B} and elliptic Schauder estimates. The second term is bounded in $C^2(\bar{S_1})$ by a direct differentiation under the integral sign argument since $\mathbb{R}^3 \setminus S$ is bounded away from $S_1$.

We first pick $\Omega>\Omega_1$ so that $d<d_0<\frac{1}{4}$. Now suppose $\rho(r,\theta,z)>0$ for some $(r,\theta,z)$ in $\mathbb{R}^3 \setminus Z$. Let us switch to Cartesian coordinates for the moment and, without loss of generality, denote this point $(x,0,z)$ with $x\geq 0$. Let $x^*=\sup\big\{x~\big|~ \rho(x,0,z)>0\big\}$. There must be a sequence $x_n\to x^*$ such that $\rho(x_n,0,z)>0$ and $(A'(\rho))_x(x_n,0,z)\leq 0$, differentiating \eqref{chap3: eq: equilibrium eq} with respect to $x$ and evaluating at $(x_n,0,z)$, we have
\begin{equation}
\Omega^2 x_n\leq -(B\rho)_x(x_n,0,z) - (\Phi_K)_x(x_n,0,z).
\end{equation}
Taking limit as $n \to \infty$, we get
\begin{equation}\label{chap3: ineq: local 1}
\Omega^2 x^*\leq -(B\rho)_x(x^*,0,z) - (\Phi_K)_x(x^*,0,z).
\end{equation}
If there was an $x_0 \in [0,x^*)$ such that $\rho(x_0,0,z)>0$ and $(A'(\rho))_x(x_0,0,z)\geq 0$, we would have
\begin{equation}\label{chap3: ineq: local 2}
\Omega^2 x_0\geq -(B\rho)_x(x_0,0,z) - (\Phi_K)_x(x_0,0,z).
\end{equation}
Subtracting \eqref{chap3: ineq: local 2} from \eqref{chap3: ineq: local 1}, we get
\begin{align}\label{chap3: ineq: local 3}
\Omega^2 (x^*-x_0) \leq \big((B\rho)_x(x_0,0,z)-(B\rho)_x(x^*,0,z)\big) \notag \\
+\big((\Phi_K)_x(x_0,0,z)- (\Phi_K)_x(x^*,0,z)\big).
\end{align}
The first term on the right hand side is bounded by $C(x^*-x_0)$ because $B\rho$ is uniformly bounded in $C^{1,1}(\bar{S_1})$ as indicated above, while the second term is bounded by $C(x^*-x_0)$ because of lemma \ref{chap3: lem: Holder and Lip continuity of Phi_K}. Hence \eqref{chap3: ineq: local 3} becomes
\begin{equation}
\Omega^2 (x^*-x_0) \leq 2C(x^*-x_0),
\end{equation}
which is impossible if we choose $\Omega_2 > \max\{\Omega_1, 2C\}$. Therefore such an $x_0$ does not exist. This in particular implies that there is no $x\in [0,x^*)$ for which $\rho(x,0,z)=0$, which then implies that $\rho(0,0,z)>0$ and $(A'(\rho))_x(0,0,z)<0$. But exactly the same argument in the $-x$ direction would imply $(A'(\rho))_x(0,0,z)>0$. This contradiction indicates that there is no such $(r,\theta,z)$ in the first place, and the assertion is therefore true.
\end{proof}

We are now ready to give
\begin{proof}[Proof of theorem \ref{chap3: thm: nonexistence of solution}]
By lemma \ref{chap3: lem: nonexist bound on supp rho in z direction}, $\bf{Supp}\rho$ is uniformly bounded in the $z$ direction. Recall from lemma \ref{chap3: lem: Holder and Lip continuity of Phi_K} that this bound is given by $l$, 
\begin{align}
M& =\int_{\bf{Supp} \rho}\rho \notag \\
& = \int_{|z|\leq l, r\leq d}\rho \notag\\
& \leq \|\rho\|_{\infty}2\pi d^2 l \notag\\
& \leq 2C_1\pi d^2 l.
\end{align}
Therefore 
\begin{equation}
d\geq \sqrt{\frac{M}{2C_1\pi l}}.
\end{equation}
Compare this with \eqref{chap3: ineq: nonexist crucial ineq}, we get
\begin{equation}
(\frac{\Omega^2}{2}-\tilde{C})\sqrt{\frac{M}{2C_1\pi l}}^{1-\frac{3}{q}}  \leq \tilde{C},
\end{equation}
which is clearly false if we choose $\Omega_0>\Omega_2$ sufficiently large. This contradiction indicates that such a solution $\rho$ does not exist.
\end{proof}



\bibliography{biblio}   

\end{document}